\documentclass[twoside, 12pt]{article}
\usepackage{a4wide,amsmath, amsthm, amssymb, amscd, mathptmx}
\usepackage{amsfonts}
\usepackage[all]{xy}\CompileMatrices\SelectTips{cm}{12}

\theoremstyle{plain}
\newtheorem{Thm}{\sc Theorem}[section]
\newtheorem{Theorem}[Thm]{\sc Theorem}
\newtheorem{Corollary}[Thm]{\sc Corollary}
\newtheorem{Proposition}[Thm]{\sc Proposition}
\newtheorem{Lemma}[Thm]{\sc Lemma}

\theoremstyle{definition}
\newtheorem{Definition}[Thm]{Definition}

\theoremstyle{remark}
\newtheorem{Remark}[Thm]{Remark}

\newtheorem{Example}[Thm]{Example}
\newtheorem*{Example*}{Example}
\newtheorem*{Remark*}{Remark}


\newcommand{\cA}{{\mathcal A}}

\newcommand{\cD}{{\mathcal D}}

\newcommand{\cJ}{{\mathcal J}}

\newcommand{\cM}{{\mathcal M}}

\newcommand{\cO}{{\mathcal O}}

\newcommand{\cS}{{\mathcal S}}

\newcommand{\cU}{{\mathcal U}}

\newcommand{\cX}{{\mathcal X}}

\renewcommand{\AA}{{\mathbb A}}

\newcommand{\FF}{{\mathbb F}}
\newcommand{\GG}{{\mathbb G}}
\newcommand{\HH}{{\mathbb H}}

\newcommand{\PP}{{\mathbb P}}
\newcommand{\QQ}{{\mathbb Q}}

\newcommand{\VV}{{\mathbb V}}
\newcommand{\WW}{{\mathbb W}}
\newcommand{\ZZ}{{\mathbb Z}}

\newcommand{\uM}{{\underline M}}

\newcommand{\ad}{\mathop{\rm ad}}
\newcommand{\can}{\mathop{\rm can}}
\newcommand{\Coh}{\mathop{\rm Coh}}
\newcommand{\Der}{\mathop{\rm Der}}
\newcommand{\cDer}{\mathop{\mathcal Der}}

\newcommand{\cEnd}{{\mathop{\mathcal{E}nd}\,}}
\newcommand{\Gm}{\mathop{{\mathbb G}_m}}
\newcommand{\Gr}{\mathop{\rm Gr}}

\newcommand{\ti}{\tilde}
\newcommand{\Tr}{\mathop{\rm Tr}}

\newcommand{\id}{{\mathop{\rm id}}}

\newcommand{\Spec}{\mathop{\rm Spec \, }}

\newcommand{\Sch}{{\mathop{{\rm Sch }}}}

\newcommand{\Hom}{{\mathop{{\rm Hom}}}}
\newcommand{\cHom}{{\mathop{{\cal H}om}}}

\renewcommand{\mod}{\mathop{\rm mod}}

\newcommand{\Nilp}{{\mathop{{\mathcal N}ilp }}}

\newcommand{\rk}{\mathop{{\rm rk}}}

\newcommand{\MIC}{{\mathop{{\rm MIC}}}}
\newcommand{\HIG}{{\mathop{{\rm HIG}}}}

\begin{document}

\title{Semistable modules over Lie algebroids in positive characteristic}
\author{Adrian Langer}
\date{March 26, 2014}
\maketitle


{{\sc Address:}\\
Institute of Mathematics, University of Warsaw, ul.\ Banacha 2,
02-097 Warszawa, Poland\\}

\begin{abstract}
We study Lie algebroids in positive characteristic and moduli
spaces of their modules. In particular, we show a Langton's type
theorem for the corresponding moduli spaces. We relate Langton's
construction to Simpson's construction of gr-semistable  Griffiths
transverse filtration. We use it to prove a recent conjecture of
Lan-Sheng-Zuo that semistable systems of Hodge sheaves on liftable
varieties in positive characteristic are strongly semistable.

\end{abstract}

\let\thefootnote\relax\footnote{
Author's work was partially supported by Polish National Science
Centre (NCN) contract number 2012/07/B/ST1/03343.}

\section*{Introduction}

In this paper we give a general approach to relative moduli spaces
of modules over Lie algebroids. As a special case one recovers
Simpson's ``non-abelian Hodge filtration'' moduli space (see
\cite{Si5} and \cite{Si6}). This allows to consider Higgs sheaves
and sheaves with integrable connections at the same time as
objects corresponding to different fibers of the relative moduli
space of modules over a deformation of a Lie algebroid over an
affine line.

A large part of the paper is devoted to generalizing various facts
concerning vector bundles with connections to modules over Lie
algebroids. In particular, we introduce restricted Lie algebroids,
which generalize Ekedahl's 1-foliations \cite{Ek}. In positive
characteristic we define a $p$-curvature for modules over
restricted Lie algebroids. This leads to a deformation of the
morphism given by $p$-curvature on the moduli space of modules to
the Hitchin morphism corresponding to the trivial Lie algebroid
structure. In the special case of bundles with connections on
curves this deformation was already studied by Y. Laszlo and Ch.
Pauly \cite{LP}.

We prove Langton's type theorem for the moduli spaces of modules
over Lie algebroids. We compare it via Rees' construction with
Simpson's inductive construction of gr-semistable Griffiths
transverse filtration (see \cite{Si6}), concluding that the latter
must finish.

This leads to the main application of our results. Namely, we
obtain a canonical gr-semistable Griffiths transverse filtration
on a module over a Lie algebroid. This implies a recent conjecture
of Lan-Sheng-Zuo that semistable systems of Hodge sheaves on
liftable varieties in positive characteristic are strongly
semistable.

The rank 2 case of this conjecture was proven in \cite{LSZ}, the
rank 3 case in \cite{Li}. Recently, independently of the author
Lan, Sheng, Yang and Zuo \cite{LSYZ} also proved the Lan-Sheng-Zuo
conjecture using a similar approach. However, they give a
different proof that Simpson's inductive construction must finish.
They also obtain a slightly weaker result proving their conjecture
only for an algebraic closure of a finite field.

\medskip
The results of this paper are used in \cite{La4} to prove
Bogomolov's type inequality for Higgs sheaves on varieties
liftable modulo $p^2$.

\medskip

\subsection{Notation}

If $X$ is a scheme and $E$ is a quasi-coherent $\cO_X$-module then
we set $E^*=\cHom _{\cO_X}(E, \cO_X)$ and $\VV (E)=\Spec
(S^{\bullet }E)$.

Let $S$ be a scheme of characteristic $p$ (i.e., $\cO_S$ is an
$\FF _p$-algebra). By $F_S^r :S\to S$ we denote the $r$-th
\emph{absolute Frobenius morphism} of $S$ which corresponds to the
$p^r$-th power mapping on $\cO _S$. If $X$ is an $S$-scheme, we
denote  by $X^{(1/S)}$ the fiber product of $X$ and $S$ over the
($1$-st) absolute Frobenius morphism of $S$. The absolute
Frobenius morphism of $X$ induces the \emph{relative Frobenius
morphism} $F_{X/S}: X\to X^{(1/S)}$.

\medskip

Let $X$ be a projective scheme over some algebraically closed field $k$.
Let $\cO _X(1)$ be an ample line bundle on $X$. For any coherent sheaf $E$
on $X$ we define its \emph{Hilbert polynomial} by $P(E)(n)=\chi (X, E(n))$
for $n\in \ZZ$. If $d$ is the dimension of the support of $E$ then we can
write
$$P(E)(n)=\frac{r(E)n^d}{d!}+\hbox{lower order terms in }n.$$
The (rational) number $r=r(E)$ is called the \emph{generalized rank}
of $E$ (note that if $X$ is not integral then the generalized rank of a sheaf depends
on the polarization). The quotient $p(E)=\frac{P(E)}{r(E)}$ is called the
\emph{normalized Hilbert polynomial} of $E$.

In case $X$ is a variety then for a torsion free sheaf $E$ the
generalized rank $r(E)$ is a product of the degree of $X$ with
respect to $\cO_X (1)$ and of the usual rank.

If $X$ is normal and $E$ is a rank $r$ torsion free sheaf on $X$
then we define the \emph{slope} $\mu (E)$ of $E$ as the quotient
of the degree of $\det E=(\bigwedge ^r E)^{**}$ with respect to
$\cO_X(1)$ by the rank $r$. In some cases we consider generalized
slopes defined with respect to a fixed $1$-cycle class, coming
from a collection of nef divisors on $X$.

Let us recall that $E$ is \emph{slope semistable} if for every
subsheaf $E'\subset E$ we have $\mu (E')\le \mu (E)$.

\section{Moduli spaces of modules over sheaves of rings
of differential operators}

In this section we recall some definitions and the theorem on
existence of moduli spaces of modules over sheaves of rings of
differential operators. This combines the results of Simpson
\cite{Si3} with the results of \cite{La2} and \cite{La3}.

Let $S$ be a locally noetherian scheme and let $f: X\to S$ be a
scheme of finite type over $S$. A \emph{sheaf of (associative and unital)
$\cO_S$-algebras $\cA$ on $X$} is a sheaf $\cA$ on $X$ of
(possibly non-commutative) rings of $\cO_X$-bimodules such that
the image of $f^{-1}\cO_S\to \cA$ is contained in the center of
$\cA$.

Let us recall after \cite{Si3} that a \emph{sheaf of rings of
differential operators on $X$ over $S$} is a sheaf $\Lambda$ of
$\cO_S$-algebras on $X$, with a filtration
$\Lambda_0\subset \Lambda_1\subset ...$ by subsheaves of abelian
subgroups satisfying the following properties:
\begin{enumerate}
\item $\Lambda=\bigcup _{i=0}^{\infty}\Lambda_i$ and $\Lambda_i \cdot \Lambda_j\subset
\Lambda_{i+j}$,
\item the image of $\cO _X\to \Lambda$ is equal to $\Lambda_0$,
\item the left and right  $\cO_X$-module structures on $\Gr _i (\Lambda):=\Lambda_i/{\Lambda_{i-1}}$
coincide and the $\cO _X$-modules $\Gr _i (\Lambda)$ are coherent,
\item the sheaf of graded $\cO_X$-algebras $\Gr (\Lambda):=\bigoplus
_{i=0}^{\infty}\Gr _i (\Lambda)$ is generated in degree $1$, i.e.,
the canonical graded morphism from the tensor $\cO_X$-algebra
$T^{\bullet}\Gr _1 (\Lambda )$ of $\Gr _1 (\Lambda )$ to  $\Gr
(\Lambda)$ is surjective.
\end{enumerate}
\medskip

Note that in positive characteristic, the sheaf of rings of
crystalline differential operators (see \cite{BMR} or Subsection
\ref{univ-env-Lie-algebroid-construction}) is a sheaf of rings of
differential operators, but the sheaf of rings of usual
differential operators is not as it almost never is generated in
degree $1$.

\medskip

Assume that $S$ is a scheme of finite type over a universally
Japanese ring $R$. Let $f: X\to S$ be a projective morphism of
$R$-schemes of finite type with geometrically connected fibers and
let $\cO_X(1)$ be an $f$-very ample line bundle. Let $\Lambda$ be
a sheaf of rings of differential operators on $X$ over $S$.

A \emph{$\Lambda$-module} is a sheaf of (left) $\Lambda$-modules
on $X$ which is quasi-coherent with respect to the  induced
$\cO_X$-module structure.

Let $T\to S$ be a morphism of $R$-schemes with $T$ locally
noetherian over $S$. Let us set $X_T=X\times _ST$ and let $p$ be
the projection of $X_T$ onto $X$. Then $\Lambda_T=\cO
_{X_T}\otimes _{p^{-1}\cO_X}p^{-1}\Lambda$ has a natural structure
of a sheaf of rings of differential operators on $X_T$ over $T$.
Moreover, if $E$ is a $\Lambda$-module on $X$ then the pull back
$E_T=p^*E$ has a natural structure of a $\Lambda_T$-module.

Note that if $E$ is a $\Lambda$-module and $E'\subset E$ is a
quasi-coherent $\cO_X$-submodule such that $\Lambda_1\cdot
E'\subset E'$ then $E'$ has a unique structure of $\Lambda$-module
compatible with the $\Lambda$-module structure on $E$ (i.e., such
that $E'$ is a $\Lambda$-submodule of $E$).

Let $Y$ be a projective scheme over an algebraically closed field
$k$ (with fixed polarization) and let $\Lambda _Y$ be a sheaf of
rings of differential operators on $Y$. Let $E$ be a
$\Lambda_Y$-module which is coherent as an $\cO_Y$-module. $E$ is
called \emph{Gieseker (semi)stable} if it is of pure dimension as
an $\cO _Y$-module (i.e., all its associated points have the same
dimension) and for any $\Lambda_Y$-submodule $F\subset E$ we have
inequality $p(F) < p(E)$ ($p(F) \le p(E)$, respectively) of
normalized Hilbert polynomials.

Every Gieseker semistable $\Lambda_Y$-module $E$ has a filtration
$0=E_0\subset E_1\subset ... \subset E_m=E$ by
$\Lambda_Y$-submodules such that the associated graded $\oplus
_{i=0}^m{E_i/E_{i-1}}$ is a \emph{Gieseker polystable} $\Lambda_Y$-module
(i.e., it is a direct sum of Gieseker stable $\Lambda_Y$-modules
with the same normalized Hilbert polynomial). Such a filtration is
called a \emph{Jordan--H\"older filtration} of this
$\Lambda_Y$-module.

\medskip

Now let us go back to the relative situation, i.e., $\Lambda$ on $X$
over $S$ (over $R$).

A \emph{family of Gieseker semistable $\Lambda$-modules on the
fibres of} $p_T:X_T=X\times_S T\to T$ is a $\Lambda_T$-module $E$
on $X_T$ which is $T$-flat (as an $\cO _{X_T}$-module) and such
that for every geometric point $t$ of $T$ the restriction of $E$
to the fibre $X_t$ is pure and Gieseker semistable as a
$\Lambda_t$-module.

We introduce an equivalence relation $\sim $ on such families by
saying that $E\sim E'$ if and only if there exists an invertible
$\cO_T$-module $L$ such that {$E'\simeq E\otimes p_T^* L$.}

Let us define the moduli functor
$$\uM ^{\Lambda}(X/S, P) : (\hbox{\rm Sch/}S) ^{o}\to \hbox{Sets} $$
from the category of locally noetherian schemes over $S$ to the
category of sets by
$$\uM ^{\Lambda}(X/S, P) (T)=\left\{
\aligned
&\sim\hbox{equivalence classes of families of Gieseker}\\
&\hbox{semistable $\Lambda$-modules on the fibres of } X_T\to T,\\
&\hbox{which have Hilbert polynomial }P.\\
\endaligned
\right\} .$$

Then we have the following theorem summing up the results of
Simpson and the author (see \cite[Theorem 4.7]{Si3},
\cite[Theorem~0.2]{La2} and \cite[Theorem 4.1]{La3}).

\begin{Theorem}
Let us fix a polynomial $P$. Then there exists a quasi-projective
$S$-scheme $M ^{\Lambda}(X/S, P)$ of finite type over $S$ and a
natural transformation of functors
$$\varphi :\uM ^{\Lambda}(X/S, P)\to \Hom _S (\cdot, M ^{\Lambda}(X/S, P)),$$
which uniformly corepresents the functor $\uM ^{\Lambda}(X/S, P)$.

For every geometric point $s\in S$ the induced map $\varphi (s)$
is a bijection. Moreover, there is an open scheme $M ^{\Lambda,
s}(X/S, P)\subset M ^{\Lambda}(X/S, P)$ that universally
corepresents the subfunctor of families of geometrically Gieseker
stable $\Lambda$-modules.
\end{Theorem}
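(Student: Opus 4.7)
The plan is to follow Simpson's GIT construction of moduli of $\Lambda$-modules, with the boundedness input replaced by the author's boundedness results valid in mixed characteristic, so that the result works over a universally Japanese base $R$.

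First, I would establish uniform boundedness of the family of Gieseker semistable $\Lambda_s$-modules of Hilbert polynomial $P$ on geometric fibres $X_s$. Since Gieseker semistability of a $\Lambda$-module is strictly weaker than Gieseker semistability of its underlying $\cO_X$-module, the argument has to allow destabilising $\cO_X$-subsheaves as long as they are not preserved by $\Lambda_1$. Here one uses the restriction/boundedness theorem of \cite{La2} together with the fact that $\Lambda_1\cdot E'\subset E'$ cuts out only finitely many possible numerical invariants for the maximal $\cO$-destabilising subsheaves, bounding the sup of their slopes in terms of invariants of $E$ and of $\Lambda_1$. Having uniform boundedness, pick $m\gg0$ so that on every geometric fibre $X_s$ each such $E$ is $m$-regular: $E(m)$ is globally generated, $h^i(E(m))=0$ for $i>0$, and $h^0(E(m))=P(m)$.

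Second, construct a parameter space. Any such $E$ is a quotient of $\Lambda_s\otimes \cO_{X_s}(-m)^{\oplus P(m)}$ as a $\Lambda_s$-module, so the relevant parameter scheme is Simpson's Quot scheme $Q=\mathrm{Quot}_{X/S}^{\Lambda}(\Lambda\otimes\cO_X(-m)^{\oplus P(m)},P)$ of $\Lambda$-module quotients with Hilbert polynomial $P$, which is projective over $S$. Let $R\subset Q$ be the locally closed subscheme of those quotients $[\Lambda\otimes\cO_{X_s}(-m)^{\oplus P(m)}\twoheadrightarrow E]$ for which $H^0(\cO_{X_s}^{\oplus P(m)})\to H^0(E(m))$ is an isomorphism and $E$ is pure. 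The group $G=\mathrm{GL}_{P(m)}$ acts naturally on $R$ and $\sim$-equivalence classes of families correspond exactly to $G$-orbits; so it suffices to construct a good/geometric $G$-quotient.

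Third, apply GIT. Following Simpson and \cite{La3}, for a second large parameter $N\gg m$ embed $R$ into a product of Grassmannians by $[E]\mapsto [\mathrm{Gr}(H^0(\Lambda_k\otimes \cO_X(-m))^{\oplus P(m)}\otimes H^0(\cO_X(N-m))\to H^0(E(N)))]$, equipped with the natural $\mathrm{SL}_{P(m)}$-linearised Plücker polarisation. The key numerical computation, carried out in \cite[Theorem 4.1]{La3}, shows that for $N$ sufficiently large GIT (semi)stability of a point of $R$ is equivalent to Gieseker (semi)stability of the corresponding $\Lambda$-module, and that closed orbits in the semistable locus correspond to Jordan--Hölder polystable modules. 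A good quotient $R^{ss}/\!/G$ then exists as a quasi-projective $S$-scheme $M^{\Lambda}(X/S,P)$ and uniformly corepresents the functor; its stable open subscheme $M^{\Lambda,s}(X/S,P)$ is a geometric quotient of $R^s$ and universally corepresents the stable subfunctor, while the bijectivity of $\varphi(s)$ on geometric points is read off from orbit closures and S-equivalence.

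The main obstacle is the first step: establishing boundedness of semistable $\Lambda_s$-modules uniformly in the geometric fibre $s$ of $S$ in arbitrary characteristic. Simpson's original argument relies crucially on the Mehta--Ramanathan restriction theorem in characteristic zero; the positive and mixed characteristic case needs the author's effective restriction theorem together with control of the discrepancy between $\cO$-semistability and $\Lambda$-semistability via the bracket of $\Lambda_1$, which is precisely what \cite{La2} and \cite{La3} provide.
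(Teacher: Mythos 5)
Your outline is essentially the intended argument: the paper offers no proof of this theorem, stating it only as a summary of \cite[Theorem 4.7]{Si3}, \cite[Theorem 0.2]{La2} and \cite[Theorem 4.1]{La3}, and your three steps (fibrewise-uniform boundedness of semistable $\Lambda$-modules, the $\Lambda$-module Quot scheme as parameter space, and the GIT comparison of numerical and Gieseker (semi)stability) are precisely what those references carry out. The one point worth emphasising is that the passage from $\cO_X$-instability to $\Lambda$-instability is controlled by the action of $\Gr_1(\Lambda)$ on the Harder--Narasimhan filtration (bounding $\mu_{\max}$ of the underlying sheaf in terms of $\mu_{\max}(\Gr_1(\Lambda))$), which together with the restriction theorems of \cite{La2} gives the boundedness you need in arbitrary and mixed characteristic.
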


In general, for every locally noetherian $S$-scheme $T$ we have a
well defined morphism $M ^{\Lambda}(X/S, P)\times _S T\to M
^{\Lambda _T} (X_T/T, P)$ which is a bijection of sets if $T$ is a
geometric point of $S$.

Let us recall that a scheme $M ^{\Lambda}(X/S, P)$ {\it uniformly
corepresents} $\uM ^{\Lambda}(X/S, P)$ if for every flat base
change $T\to S$ the fiber product $ M ^{\Lambda}(X/S, P) \times _S
T$ corepresents the fiber product functor $\Hom_S (\cdot ,
T)\times _{\Hom _S(\cdot ,S)} \uM ^{\Lambda}(X/S, P)$.

\section{Lie algebroids}

\subsection{Lie algebroids and de Rham complexes}

Let $f:X\to S$ be a morphism of schemes. A \emph{sheaf of
$\cO_S$-Lie algebras on $X$} is a pair $(L, [\cdot, \cdot ]_L)$
consisting of a (left) $\cO_X$-module $L$ (which is an $f^{-1}\cO_S$-bimodule) 
with a morphism of $f^{-1}\cO_S$-modules $[\cdot, \cdot ]_L : L\otimes _{f^{-1}\cO
_S} L\to L$, which is alternating and which satisfies the Jacobi
identity. A homomorphism of sheaves of $\cO_S$-Lie algebras on $X$
is an $\cO_X$-linear morphism $L\to L'$ which preserves the Lie
bracket. As usual for $x\in L(U)$ we define $\ad x: L(U)\to L(U)$
by $(\ad x) (y)=[x,y]_L$.

Let $T_{\cO_S}(L)=\bigoplus_{n\ge 0} \overbrace{L\otimes
_{f^{-1}\cO_S} ...\otimes _{f^{-1}\cO_S}L}^{n}$ be the tensor
algebra of $L$ over $f^{-1}\cO_S$ (it is a non-commutative
$f^{-1}\cO_S$-algebra). Let us recall that the \emph{universal
enveloping algebra} $\cU _{\cO_S}(L)$ of a Lie algebra sheaf $(L,
[\cdot, \cdot ]_L)$ is defined as the quotient of $T_{\cO_S}(L)$
by the two-sided ideal generated by $x\otimes y -y \otimes x
-[x,y]_L$ for all local sections $x, y \in L$.

The most important example of a sheaf of $\cO_S$-Lie algebras on
$X$ is the relative tangent sheaf $T_{X/S}=\cDer _{\cO_S}(\cO_X,
\cO_X)$ with a natural bracket given by $[D_1,D_2]=D_1D_2-D_2D_1$
for local $\cO_S$-derivations $D_1$, $D_2$ of $\cO_X$.

\medskip

\begin{Definition}
An \emph{$\cO_S$-Lie algebroid on $X$} is a triple $(L, [\cdot,
\cdot ]_L, \alpha )$ consisting of a sheaf of $\cO_S$-Lie algebras
$(L, [\cdot, \cdot ]_L)$ on $X$  and a homomorphism $\alpha: L\to
T_{X/S}$, $x\to \alpha_x$, of sheaves of $\cO_S$-Lie algebras on
$X$, which satisfies the following Leibniz rule
$$[x,fy]_L=\alpha _x (f)y+f\,[x,y]_L$$
for all local sections $f\in \cO _X$ and $x,y\in L$ (in the
formula we treat $\alpha _x$ as an $\cO_S$-derivation of $\cO_X$).
We say that $L$ is \emph{smooth} if it is coherent and locally
free as an $\cO_X$-module. $L$ is \emph{quasi-smooth} if it is
coherent and torsion free as an $\cO_X$-module.
\end{Definition}

The map $\alpha$ in the above definition is usually called
\emph{the anchor}. A Lie algebroid is a sheaf of Lie-Rinehart
algebras (see \cite{Ri}). It is also a special case of the more
general notion of a Lie algebra in a topos defined by Illusie
(see \cite[Chapitre VIII, Definition 1.1.5]{Ill2}).

A \emph{homomorphism of $\cO_S$-Lie algebroids} $L$ and $L'$ on
$X$ is a homomorphism $L\to L'$ of sheaves of $\cO_S$-Lie algebras
on $X$ which commutes with the anchors.

Note that an $\cO_S$-Lie algebroid on $X$ with the zero anchor map
corresponds to a sheaf of $\cO_X$-Lie algebras.

\medskip

\begin{Definition}
A \emph{de Rham complex on $X$ over $S$} is a pair $(\bigwedge ^{\bullet}
M,d_M^{\bullet})$ consisting of the exterior algebra $\bigwedge ^{\bullet}
M:= \bigwedge ^{\bullet}_{\cO_X}M$
of an $\cO_X$-module $M$ and an $\cO_S$-anti-derivation
$d_M^{\bullet}: \bigwedge ^{\bullet} M \to \bigwedge ^{\bullet} M$
of degree $1$ (i.e., $d_M^{\bullet}(x\wedge y)=(d_M^{\bullet}x)\wedge y+(-1)^jx\wedge
d_M^{\bullet}y$ for all local sections $x\in \bigwedge^jM$ and $y\in
\bigwedge^{\bullet} M$) such that $(d_M^{\bullet})^2=0$.
We say that  $(\bigwedge ^{\bullet} M,d_M^{\bullet})$ is \emph{smooth} if $M$ is coherent
and locally free.
\end{Definition}

A de Rham complex is a special case of a sheaf of
graded-commutative differential graded algebras. A special case of
a de Rham complex is the de Rham complex $(\Omega_{X/S}^{\bullet},
d_{X/S}^{\bullet})$, which is the unique de Rham complex extending
the canonical $\cO _S$-derivation $d_{X/S}: \cO_X\to \Omega_{X/S}$
(uniqueness follows because $\Omega_{X/S}$ is generated by
$d_{X/S}\cO_X$ as a left $\cO_X$-module). By the universal
property of $d_{X/S}$ we have $\Der _{\cO_S}(\cO_X,M)\simeq \Hom
_{\cO_X}(\Omega_{X/S},M)$ and hence for every de Rham complex
$(\bigwedge ^{\bullet}M,d_M^{\bullet})$ we have a unique morphism of de Rham complexes
$(\Omega_{X/S}^{\bullet},d_{X/S}^{\bullet})\to (\bigwedge ^{\bullet}M,d_M^{\bullet})$.
This morphism induces a well defined map on the hypercohomology groups:
$$H^i_{DR}(X/S):=\HH ^{\bullet}(\Omega_{X/S}^{\bullet})\to \HH^{\bullet}({\bigwedge}^{\bullet}M).$$

\medskip

To every $\cO_S$-Lie algebroid  $(L, [\cdot, \cdot ]_L, \alpha )$
on $X$  we can associate a de Rham complex $(\bigwedge
^{\bullet}M,d_M^{\bullet})$ on $X$ over $S$ for $M=L^*$. This is
done by the following well known formula generalizing the usual
exterior differential:
$$\begin{array}{rcl}
(d_Mm) (l_1,...,l_{k+1})&=&\sum _{i=1}^{k+1}(-1)^{i+1}\alpha
_{l_i}(m(l_1,...,\hat{l_i}, ..., l_{k+1} ))\\
&+&\sum _{1\le i<j\le k+1}(-1)^{i+j} m([l_i,l_j]_L,
l_1,...,\hat{l_i}, ...,\hat{l_j}, ..., l_{k+1})
\end{array}
$$ for $m\in \bigwedge^k M$ and $l_1,..., l_{k+1}\in L$.
This gives a functor from the category of Lie algebroids to the
category of de Rham complexes.

On the other hand, to every de Rham complex $(\bigwedge^{\bullet}
M,d_M^{\bullet})$ on $X$ over $S$ we can associate a Lie algebroid
structure on $L=M^*$.  The anchor $L\to T_{X/S}=(\Omega_{X/S})^*$
is obtained as the transpose  of the $\cO_X$-homomorphism
$\Omega_{X/S}\to M$ corresponding to the $\cO_S$-derivation
$d_M:\cO_X\to M$. The bracket on $L$ can be read off the above
formula defining $d_M: M\to \bigwedge^2M$. This provides a functor
in the opposite direction: from  the category of de Rham complexes
to the category of Lie algebroids. These functors are
quasi-inverse on subcategories of smooth objects.

\medskip

If $L$ is a smooth $\cO_S$-Lie algebroid on $X$ then the
corresponding de Rham complex is denoted by
$(\Omega_{L}^{\bullet},d_{L}^{\bullet})$. In this case we set
$$H^i_{DR}(L):=\HH ^i(\Omega_{L}^{\bullet},d_{L}^{\bullet}).$$
We have the following standard spectral sequence associated to
the de Rham complex of $L$:
$$E^{ij}_1=H^j(X/S, \Omega_{L}^{i})\Rightarrow H^{i+j}_{DR}(L).$$

\subsection{Universal enveloping algebra of differential
operators}\label{univ-env-Lie-algebroid-construction}

\begin{Definition}
A \emph{sheaf of $\cO _S$-Poisson algebras on} $X$ is a pair
$(\cA, \{ \cdot, \cdot \})$ consisting of a sheaf $\cA$ of
commutative, associative and unital $\cO_X$-algebras with a
Poisson bracket $\{ \cdot, \cdot \}$ such that $(\cA, \{ \cdot,
\cdot \})$ is a sheaf of $\cO_S$-Lie algebras on $X$ satisfying
the Leibniz rule
$$\{x, y\cdot z\} =\{x,y\} \cdot z +y\cdot \{x,z\}$$
for all $x,y,z\in \cA$.
\end{Definition}

Let $\Lambda$ be a sheaf of rings of differential operators on $X$
over $S$ such that $\Lambda _0=\cO_X$. Let us assume that
$\Lambda$ is \emph{almost commutative}, i.e., the associated
graded $\Gr (\Lambda) $ is a sheaf of commutative
$\cO_X$-algebras. Then $\Gr (\Lambda) $ has a natural structure of
a sheaf of $\cO_S$-Poisson algebras on $X$ with the Poisson
bracket given by
$$\{[x], [y]\} :=\left(xy-yx\quad \mod \, {\Lambda} _{i+j-2} \right)\in  {\Gr} _{i+j-1} (\Lambda),$$
where $[x]\in  \Gr _i (\Lambda)$ is the class of $x\in \Lambda _i$
and  $[y]\in \Gr _j (\Lambda)$ is the class of $y\in \Lambda_j$.
The Poisson bracket induces an $\cO_S$-Lie algebroid structure on
$\Gr_1(\Lambda)$. The Lie bracket on $\Gr_1(\Lambda)$ is equal to
the Poisson bracket and the anchor map $\alpha: \Gr_1(\Lambda) \to
T_{X/S}$ is given by sending $[x]$ to the $\cO_S$-derivation $y\to
\{[x], y\}$, $y\in \cO_X=\Gr_{0}(\Lambda)$.

On the other hand, if $L$ is an $\cO_S$-Lie algebroid on $X$ then
we can associate to $L$ a sheaf of rings of differential operators
on $X$ over $S$ in the following way.
 We define an $\cO_S$-Lie algebra structure on $\ti L=\cO_X\oplus L$ by setting
$$[f+x, g+y]_{\ti L}=\alpha _x(g)-\alpha_y(f)+[x,y]_L$$
for all local sections $f,g\in \cO_X$ and $x,y\in L$. Let
$\cU_{\cO_S} (\ti L)$ be the universal enveloping algebra of $\ti
L$ and let $\ti \cU _{\cO_S}(\ti L)$ be the sheaf of subalgebras
(without unit!) generated by the image of the canonical map
$i_{\ti L}: \ti L\to \cU_{\cO_S} (\ti L)$ (note that in general
this map need not be injective). We define $\Lambda_L$ as the
quotient of $\ti \cU_{\cO_S}(\ti L)$ by the two-sided ideal
generated by all elements of the form $i_{\ti L}(f) i_{\ti
L}(x)-i_{\ti L}(fx)$ for all $f\in \cO_X$ and $x\in \ti L$. Let
$\Lambda_{L,j}$ be the left $\cO_X$-submodule of $\Lambda_L$
generated by products of at most $j$ elements of the image of $L$
in $\Lambda_L$. This defines a filtration of $\Lambda_L$ equipping
it with structure of sheaf of rings of differential operators
(since the canonical graded morphism $S^{\bullet} \Gr_1
(\Lambda_L)\to \Gr (\Lambda_L)$ is surjective, the constructed
$\Lambda_L$ is almost commutative). We call $\Lambda _L$ \emph{the
universal enveloping algebra of differential operators associated
to $L$}.

By the Poincare-Birkhoff-Witt theorem, if the Lie algebroid $L$ is
smooth then $L\to \Gr_1(\Lambda _L)$ is an isomorphism and the
canonical epimorphism $S^{\bullet} L \to \Gr (\Lambda_L)$ is an
isomorphism of sheaves of graded $\cO_X$-algebras (see
\cite[Theorem 3.1]{Ri}). This implies that if $L$ is quasi-smooth
then the canonical map $L\to \Lambda _L$ is injective.

If $L=T_{X/S}$ and the anchor map is identity, then $\Lambda_L$ is
denoted by $\cD_{X/S}$ and it is called \emph{the sheaf of
crystalline differential operators} (see \cite{BMR}). In \cite{BO}
the authors call it the sheaf of PD differential operators. In the
characteristic zero case the sheaf $\Lambda _L$ and the
correspondence between Lie algebroids and sheaves of rings of
differential operators was studied by Simpson in \cite[Theorem
2.11]{Si3} with subsequent corrections by Tortella in
\cite[Theorem 4.4]{To}.

\medskip

We can also consider twisted versions of sheaves of rings of
differential operators associated to a Lie algebroid (see
\cite{BB} and \cite{To}).

Let $\Lambda$ be an almost commutative  sheaf of rings of
differential operators on $X$ over $S$ such that $\Lambda
_0=\cO_X$. Then $\Lambda _1$ has an $\cO_S$-Lie algebra structure
on $X$ given by the usual Lie bracket $[\cdot, \cdot]$ coming from
$\Lambda$ and the anchor map given by sending $x\in \Lambda _1$ to
$f\to [x,f]$. Then $\Lambda _1\to \Gr _1(\Lambda)$ is a
homomorphism of $\cO_S$-Lie algebras with kernel being the sheaf
$\cO_X$ (with a trivial $\cO_S$-Lie algebroid structure).

The following definition is motivated by \cite[Definition
2.1.3]{BB}:

\begin{Definition}
A \emph{generalized $\cO_S$-Picard Lie algebroid} on $X$ is an
$\cO_S$-Lie algebroid $\ti L$ equipped with a section $1_{\ti L}$
of $\ti L$ inducing an exact sequence of $\cO_S$-Lie algebroids
$$0\to \cO_X \to \ti L\to
L\to 0,$$ where $\cO_X$ is taken with the trivial $\cO_S$-Lie
algebroid structure.
\end{Definition}

To any generalized $\cO_S$-Picard Lie algebroid $\ti L$ we can
associate an almost commutative sheaf of rings of differential
operators $\ti \Lambda _{\ti L}$ on $X$ over $S$ such that $\ti
\Lambda _{\ti L, 0}=\cO_X$ and $\ti \Lambda _{\ti L, 1}=\ti L$.
$\ti \Lambda _{\ti L}$ is constructed as a quotient of the
universal enveloping algebra of differential operators
$\Lambda_{\ti L}$ by the two-sided ideal generated by $1_{\ti
L}-1$. As in \cite[Lemma 2.1.4]{BB}, this defines a fully faithful
functor from the category of generalized Picard Lie algebroids to
the category of almost commutative sheaves of rings of
differential operators.

The analogous construction can be also found in \cite{To}, where
the author constructs $\ti \Lambda _{\ti L}$ by gluing local
pieces.

\section{Modules over Lie algebroids}

\subsection{Modules with generalized connections}

Let $X$ be an $S$-scheme. Let $M$ be a coherent $\cO_X$-module
with an $\cO_S$-derivation $d_M:\cO_X\to M$. A
\emph{$d_M$-connection} on a coherent $\cO_X$-module $E$ is an
$\cO_S$-linear morphism $\nabla: E\to E\otimes _{\cO_X} M $
satisfying the following Leibniz rule
$$\nabla (fe) =f\nabla(e)+e\otimes d_M(f) $$
for all local sections $f\in \cO_X$ and $e\in E$.

Note that notion of $d_M$-connection depends on the choice of
derivation $d_M$ and not only the sheaf $M$. For example if
$M=\Omega_{X/S}$ then  the standard derivation $d_{X/S}$ leads to
a sheaf with a usual connection whereas the zero derivation leads
to a Higgs sheaf (but without any integrability condition).

\subsection{Generalized Higgs sheaves}

Assume that $(\bigwedge ^{\bullet} M, d_{M}^{\bullet})$ is a de Rham complex and let
$E$ be a coherent $\cO_X$-module. Then a $d_M$-connection $\nabla:
E\to E\otimes M$ can be extended to a morphism $\nabla_i:
E\otimes_{\cO_X} {\bigwedge}^i M\to
E\otimes_{\cO_X}\bigwedge^{i+1}M$ by setting
$$\nabla_i (e\otimes \omega)=e\otimes d_M\omega +(-1)^i\nabla (e)\wedge \omega ,$$
where $e\in E$ and $\omega \in  {\bigwedge}^i M$ are local
sections. As usually one can check that the \emph{curvature}
$K=\nabla_1\circ \nabla $ is $\cO_X$-linear and $\nabla_{i+1}\circ
\nabla_i (e\otimes \omega )=K(e)\wedge \omega$. We say that $(E,
\nabla)$ is \emph{integrable} if the curvature $K=0$. If  $(E,
\nabla)$ is integrable then the sequence
$$0\to E\mathop{\to}^{\nabla}E\otimes M \mathop{\to}^{\nabla _1}E\otimes {\bigwedge}^2 M\to ...$$
becomes a complex. The hypercohomology groups of this complex are denoted
by $H^i_{DR}(X,E):=\HH ^i (E\otimes \bigwedge ^{\bullet}M, \nabla)$.

\medskip

Let $\bigwedge ^{\bullet}M$ be the de Rham complex corresponding
to the exterior $\cO_S$-algebra of $M$ with zero anti-derivation
$d_M$. Then a coherent $\cO_X$-module with an integrable
$d_M$-connection $\theta: E\to E\otimes _{\cO_X} M$ is called an
\emph{$M$-Higgs sheaf}. The corresponding homomorphism $\theta$ is
$\cO_X$-linear and it is called an \emph{$M$-Higgs field} (or just
a Higgs field). A \emph{system of $M$-Hodge sheaves} is an
$M$-Higgs sheaf $(E, \theta)$ with decomposition $E=\bigoplus E^j$
such that $\theta : E^j \to E^{j-1}\otimes M$. For
$M=\Omega_{X/S}$ we recover the usual notions of a Higgs sheaf and
a system of Hodge sheaves.
\medskip

To be consistent with notation in the characteristic zero case,
the hypercohomology groups $\HH ^i (E\otimes \bigwedge
^{\bullet}M, \theta)$ of the complex associated to an $M$-Higgs
sheaf are denoted by  $H^i_{Dol}(X,E)$. The following lemma can be
proven in the same way as \cite[Lemma 2.5]{Si2}:

\begin{Lemma}
Let $X$ be a smooth $d$-dimensional projective variety over an
algebraically closed field $k$ and let $(E, \theta)$ be an
$M$-Higgs sheaf. Then we have
$\chi _{Dol}(X, E)=\rk E \cdot \chi _{Dol}(X, \cO_X)$.
Moreover, if $E$ is locally free then
we have a perfect pairing
$$H^{i}_{Dol}(X, E)\otimes H^{2d-i}_{Dol}(X, E^*)\to k$$
induced by Serre's duality.
\end{Lemma}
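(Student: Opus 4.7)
The plan is to work with the \emph{stupid (column) filtration} $F^p=E\otimes \bigwedge^{\geq p}M$ of the Dolbeault complex $(E\otimes \bigwedge^{\bullet}M,\nabla_{\bullet})$. Since in the Higgs setting $d_M=0$, the field $\theta$ is $\cO_X$-linear, so the differentials induced on the associated graded of this filtration vanish. The resulting first-quadrant spectral sequence takes the form
$$E_1^{p,q}=H^q\bigl(X,\, E\otimes {\textstyle\bigwedge}^p M\bigr)\Rightarrow H^{p+q}_{Dol}(X,E),$$
from which one immediately reads off
$$\chi_{Dol}(X,E)=\sum_p(-1)^p\chi\bigl(X,\, E\otimes {\textstyle\bigwedge}^p M\bigr);$$
in particular $\chi_{Dol}(X,E)$ depends only on the underlying $\cO_X$-module structure of $E$, not on $\theta$.

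For the first assertion, I would apply Hirzebruch--Riemann--Roch term by term to get
$$\chi_{Dol}(X,E)=\int_X \ch(E)\cdot\Bigl(\sum_p (-1)^p \ch({\textstyle\bigwedge}^p M)\Bigr)\cdot \td(X),$$
and use the standard Koszul/Chern-root identity $\sum_p (-1)^p \ch(\bigwedge^p M)=\prod_j(1-e^{m_j})$, which shows the bracketed class begins in degree $\rk M$. In the setting of interest one has $\rk M\ge d$, so only the constant term $\ch_0(E)=\rk E$ of $\ch(E)$ can meet the top-degree component of $\td(X)\cdot \prod_j(1-e^{m_j})$ to produce a nonzero integral; comparing with $E=\cO_X$ gives $\chi_{Dol}(X,E)=\rk E\cdot \chi_{Dol}(X,\cO_X)$.

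For the second assertion, suppose $E$ is locally free. Then each term $E\otimes \bigwedge^p M$ is locally free, and ordinary Serre duality yields
$$H^q(X, E\otimes {\textstyle\bigwedge}^p M)^{\ast}\cong H^{d-q}\bigl(X,E^{\ast}\otimes ({\textstyle\bigwedge}^p M)^{\ast}\otimes \omega_X\bigr).$$
The wedge pairing $\bigwedge^p M\otimes \bigwedge^{d-p}M\to \bigwedge^d M\cong \omega_X$ identifies the right-hand side with the $(d-p,d-q)$-entry of the $E_1$-page associated to the Dolbeault complex of $E^{\ast}$ with Higgs field $-\theta^{\ast}$. Verifying that this termwise Serre pairing is strictly compatible with the Higgs differentials lifts it to a pairing of complexes, and then to the desired perfect pairing $H^{i}_{Dol}(X,E)\otimes H^{2d-i}_{Dol}(X,E^{\ast})\to k$ on hypercohomology. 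The main obstacle is this last compatibility check, together with the correct handling of signs, shifts, and the identification $\det M\cong \omega_X$; the first part, once the column-filtration spectral sequence is in hand, is a routine HRR calculation with a Chern-root argument.
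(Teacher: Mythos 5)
Your argument is correct and is essentially the intended one: the paper gives no proof, deferring to Simpson's Lemma 2.5 of \emph{Higgs bundles and local systems}, whose proof is exactly this combination of the column-filtration spectral sequence (so $\chi_{Dol}$ is the alternating sum of $\chi(X,E\otimes\bigwedge^pM)$), Hirzebruch--Riemann--Roch with the Borel--Serre identity, and termwise Serre duality made compatible with the Higgs differentials. You have also correctly isolated the implicit hypotheses ($\rk M\ge d$ for the Euler characteristic statement, and $\rk M=d$ with $\det M\cong\omega_X$ for the duality) that the lemma's statement leaves tacit.
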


\subsection{Modules over Lie algebroids and coHiggs sheaves}

Let $L$ be an $\cO_S$-Lie algebroid on $X$ and let $E$ be an
$\cO_X$-module. Let us recall that a (left) $\Lambda_L$-module
structure on $E$ is the same as an $L$-module structure, i.e., a
homomorphism  $\nabla: L\to \cEnd _{\cO_S}E$ of sheaves of
$\cO_S$-Lie algebras on $X$ (in particular,  $\nabla$ is
$\cO_X$-linear) satisfying Leibniz's rule
$$\nabla (x) (fe)=\alpha _x(f)e+\nabla(fx)(e)$$
for all local sections $f\in \cO_X$, $x\in L$ and $e\in E$. One
can also look at $L$-modules $E$ as modules $E$ over the sheaf of
$\cO_S$-Lie algebras $\ti L=\cO_X\oplus L$ on $X$ defined in
Subsection \ref{univ-env-Lie-algebroid-construction}, which
satisfy equality $(fy)e=f(ye)$ for all local sections $f\in
\cO_X$, $y\in L'$ and $e\in E$.

Proof of the following easy lemma is left to the reader:

\begin{Lemma}
Let $L$ be a smooth $\cO_S$-Lie algebroid $L$ and let $(\bigwedge
^{\bullet} \Omega_L, d_{\Omega_L}^{\bullet})$ be the associated de
Rham complex. Then we have an equivalence of categories between
the category of $L$-modules and coherent $\cO_X$-modules with
integrable $d_{\Omega_L}$-connection.
\end{Lemma}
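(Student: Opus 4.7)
The plan is to construct mutually inverse functors and to reduce both the Leibniz rule and the integrability condition to the defining axioms of an $L$-module by a formal duality. Since $L$ is smooth, $L$ and $\Omega_L = L^*$ are locally free of finite rank, so for every coherent $\cO_X$-module $E$ there are canonical identifications $E\otimes_{\cO_X}\Omega_L\simeq \cHom_{\cO_X}(L,E)$ and $E\otimes_{\cO_X}\bigwedge^2\Omega_L\simeq \cHom_{\cO_X}(\bigwedge^2 L,E)$; these will allow free passage between a $d_{\Omega_L}$-connection on $E$ and an $\cO_X$-linear map $L\to \cEnd_{\cO_S}E$.

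Given an $L$-module $\nabla\colon L\to \cEnd_{\cO_S}E$, I would define $\tilde\nabla\colon E\to E\otimes_{\cO_X}\Omega_L$ by $\tilde\nabla(e)(x):=\nabla(x)(e)$. The $\cO_X$-linearity of $\nabla$ in $x$ ensures $\tilde\nabla(e)\in \cHom_{\cO_X}(L,E)$, and $\tilde\nabla$ is $\cO_S$-linear in $e$ because each $\nabla(x)$ is $\cO_S$-linear. Since $d_{\Omega_L}$ is obtained by composing $d_{X/S}$ with the transpose of the anchor, one has $(d_{\Omega_L}f)(x)=\alpha_x(f)$; plugging this identity into the Leibniz rule for the $L$-module $E$ gives exactly the required $d_{\Omega_L}$-connection Leibniz rule for $\tilde\nabla$. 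The inverse direction is obtained simply by contracting a $d_{\Omega_L}$-connection against elements $x\in L$, and the two assignments are visibly mutually inverse via the common formula $\tilde\nabla(e)(x)=\nabla(x)(e)$.

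The heart of the argument is checking that integrability of $\tilde\nabla$ corresponds to Lie-bracket compatibility of $\nabla$. For this I would contract the curvature $K=\nabla_1\circ\tilde\nabla\colon E\to E\otimes\bigwedge^2\Omega_L$ against a local bivector $x\wedge y$ and combine the alternating-sum formula for $d_{\Omega_L}$ on $\Omega_L$ (the one involving the terms $\alpha_{l_i}$ and $[l_i,l_j]_L$) with the extension formula $\nabla_i(e\otimes\omega)=e\otimes d_M\omega+(-1)^i\nabla(e)\wedge\omega$. After the sign bookkeeping this should collapse to the familiar expression
$$K(e)(x\wedge y)=\nabla(x)\nabla(y)(e)-\nabla(y)\nabla(x)(e)-\nabla([x,y]_L)(e),$$
which vanishes for all $e,x,y$ precisely when $\nabla$ respects the Lie bracket. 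Functoriality of both constructions in $E$ is then immediate.

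The main obstacle I expect is exactly this sign-bookkeeping in the curvature computation: the alternating sum defining $d_{\Omega_L}$ on $\Omega_L$ and the sign $(-1)^i$ in the extension formula for $\nabla_i$ each contribute, and one must verify that all signs line up to give the standard commutator-minus-bracket expression. Every other step is formal and uses smoothness of $L$ only through the two identifications $L\simeq L^{**}$ and $E\otimes\Omega_L\simeq\cHom_{\cO_X}(L,E)$.
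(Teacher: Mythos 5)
The paper gives no proof of this lemma (it is explicitly ``left to the reader''), and your argument is exactly the intended one: use smoothness of $L$ to identify $E\otimes_{\cO_X}\Omega_L$ with $\cHom_{\cO_X}(L,E)$, match $\cO_X$-linearity of $\nabla$ in $x$ with $\tilde\nabla(e)$ being $\cO_X$-linear on $L$, the module Leibniz rule with the connection Leibniz rule via $(d_{\Omega_L}f)(x)=\alpha_x(f)$, and bracket-compatibility with vanishing of the curvature through the identity $K(e)(x\wedge y)=[\nabla(x),\nabla(y)](e)-\nabla([x,y]_L)(e)$. This is correct; the only care needed is the sign bookkeeping you already flag (the paper's convention $(-1)^i\nabla(e)\wedge\omega$ versus the evaluation convention on $\bigwedge^2\Omega_L$), which affects $K$ only by an overall sign and hence not the equivalence ``$K=0$ iff $\nabla$ preserves the bracket.''
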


Let $L$ be a coherent $\cO_X$-module. Let us provide it with the
trivial $\cO_S$-Lie algebroid structure, i.e.,  we take zero
bracket and zero anchor map. In this case we say that $L$ is
\emph{a trivial Lie algebroid}. For a trivial Lie algebroid the
corresponding sheaf of rings of differential operators
$\Lambda_{L}$ is equal to the (commutative) symmetric
$\cO_X$-algebra $S^{\bullet} (L)$. In this case \emph{an
$L$-coHiggs sheaf} is a (left) $\Lambda_L$-module, coherent as an
$\cO_X$-module. If $L$ is smooth then giving an $L$-coHiggs sheaf
is equivalent to giving an $\Omega_L$-Higgs sheaf.

If $L$ is smooth then $\VV (L)\to X$ is a vector bundle and we can
take its projective completion $\pi : Y=\PP (L\oplus \cO_X)\to X$.
The divisor at infinity $D=Y-\VV (L)$ is canonically isomorphic to
$\PP (L)$. On $Y$ we have the tautological relatively ample line
bundle $\cO _{\PP (L\oplus \cO_X) }(1)$. If $\cO_X (1)$ is an
$S$-ample polarization on $X$ then for sufficiently large $n$ the
line bundle $\cA=\cO _{\PP (L\oplus \cO_X) }(1)\otimes
\pi^*(\cO_X(n))$ is also $S$-ample.

By definition any $L$-coHiggs sheaf gives rise to a coherent
$\cO_{\VV (L)}$-module. The following lemma  describes image of
the corresponding functor (cf. \cite[Lemma 6.8 and Corollary
6.9]{Si4}):

\begin{Lemma}
We have an equivalence of categories between $L$-coHiggs sheaves
and coherent sheaves on $Y$, whose support does not intersect $D$.
Under this equivalence pure sheaves correspond to pure sheaves of
the same dimension and the notions of (semi)-stability are the
same when considered with respect to polarizations $\cO_X(1)$ on
$X$ and $\cA$ on $Y$.
\end{Lemma}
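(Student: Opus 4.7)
The proof proposal is to set up the classical spectral (BNR–type) correspondence and then check that the chosen polarization on $Y$ recovers the polarization on $X$ after pushforward. First I would observe that because $L$ is smooth and trivial, $\Lambda_L = S^\bullet L$ and an $L$-coHiggs sheaf is the same datum as a quasi-coherent $\cO_{\VV(L)}$-module which is coherent when pushed down to $X$ via the affine morphism $\VV(L) = \Spec_X(S^\bullet L) \to X$. Such a sheaf has support finite (in particular proper) over $X$. Since $\VV(L) = Y \setminus D$ is an open subscheme, a coherent sheaf on $Y$ with support disjoint from $D$ restricts to a coherent sheaf on $\VV(L)$ whose support, being closed in the $X$-proper scheme $Y$, is automatically proper over $X$; conversely, any coherent sheaf on $\VV(L)$ with support proper over $X$ extends uniquely by zero to a coherent sheaf on $Y$ with support disjoint from $D$. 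This gives the first assertion.

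Next I would observe that for any $F$ in this category, the support $Z \subset \VV(L) \subset Y$ is finite over $X$ via $\pi$, since it is both affine (contained in $\VV(L)$) and proper over $X$. Hence $\pi|_Z$ is a finite morphism, $\dim F = \dim \pi_* F$, and the associated points of $F$ map bijectively to those of $\pi_*F$ with the same local dimension; in particular $F$ is pure of dimension $d$ if and only if $\pi_*F$ is, which proves the statement about purity.

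For the comparison of (semi)stability the key observation is that $\cO_Y(1) = \cO_{\PP(L \oplus \cO_X)}(1)$ becomes canonically trivial on $\VV(L)$: the tautological subbundle $\cO_Y(-1) \subset \pi^*(L \oplus \cO_X)$ surjects onto $\pi^*\cO_X$ precisely on the open set $\VV(L)$, and this gives a canonical isomorphism $\cO_Y(1)|_{\VV(L)} \simeq \cO_{\VV(L)}$. Since any sheaf $F$ in our category is supported on $\VV(L)$, we have $F \otimes \cO_Y(1) \simeq F$ canonically. Combined with the projection formula this gives
$$\chi(Y, F \otimes \cA^{\otimes m}) = \chi(Y, F \otimes \pi^*\cO_X(nm)) = \chi(X, \pi_* F \otimes \cO_X(nm)),$$
so the Hilbert polynomial of $F$ computed with respect to $\cA$ equals the Hilbert polynomial of $\pi_*F$ computed with respect to $\cO_X(1)$ after the variable change $m \mapsto nm$; in particular the normalized Hilbert polynomials match under the equivalence.

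Finally, subsheaves correspond: any coherent subsheaf $F' \subset F$ on $Y$ supported on $\VV(L)$ pushes forward to an $S^\bullet L$-submodule $\pi_*F' \subset \pi_*F$, i.e. a coHiggs subsheaf, and conversely every coHiggs subsheaf of $\pi_*F$ arises this way by taking $\widetilde{\cdot}$ on $\VV(L)$ and extending by zero. Combined with the Hilbert polynomial identity this shows that $F$ is Gieseker (semi)stable with respect to $\cA$ if and only if $\pi_*F$ is Gieseker (semi)stable with respect to $\cO_X(1)$. The only step that requires some care is the canonical trivialization of $\cO_Y(1)$ on $\VV(L)$, which is the main point where the specific choice of projective completion $\PP(L \oplus \cO_X)$ (as opposed to $\PP(L)$) enters.
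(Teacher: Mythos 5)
The paper offers no proof of this lemma, only a citation of Simpson's Lemma 6.8 and Corollary 6.9, and your argument is exactly the standard spectral-correspondence proof found there: finiteness of the support over $X$ (affine plus proper), exactness and fullness of $\pi_*$ on sheaves supported in $\VV(L)$, and the trivialization of $\cO_Y(1)$ on $\VV(L)$ by the tautological section coming from $1\in\cO_X\subset L\oplus\cO_X$. The only phrase to adjust is the claim that ``the normalized Hilbert polynomials match'': your own display gives $\chi(Y,F\otimes\cA^{\otimes m})=P_{\pi_*F}(nm)$, so the polynomials agree only after the substitution $m\mapsto nm$ and division by the factor $n^d$ relating the generalized ranks --- but this reparametrization preserves the ordering of normalized Hilbert polynomials of $d$-dimensional sheaves, so the (semi)stability comparison goes through unchanged.
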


This lemma suggests another construction of the moduli space
$M^L_{\rm Dol}(X/S,P)=M^{\Lambda_L}(X/S, P)$ of Gieseker
semistable $L$-coHiggs sheaves (with fixed Hilbert polynomial $P$)
on $X/S$ using construction of the moduli space $M(Y/S, P)$ of
Gieseker semistable sheaves of pure dimension $n=\dim (X/S)$ on
$Y/S$ (with Hilbert polynomial $P$). Namely, $M(Y/S, P)$ is
constructed as a GIT quotient $R/\!\!/G$, where $R$ is some
parameter space and $G$ is a reductive group acting on $R$. Then
$M^L_{\rm Dol}(X/S,P)$ can be constructed as the quotient $R'/\!\!
/G$, where $R'$ is the $G$-invariant subscheme of $R$
corresponding to subsheaves whose support does not intersect $D$.

\subsection{Modules on varieties over fields}

In this subsection we take as $S$ the spectrum of an algebraically
closed field $k$. We also assume that $X$ is normal and projective
with fixed polarization $\cO_X(1)$.

We say that a sheaf with an $M$-connection $(E, \nabla)$ is
\emph{slope semistable} if $E$ is torsion free as an
$\cO_X$-module and if for any $\cO_X$-submodule $E'\subset E$ such
that $\nabla (E')\subset E'\otimes _{\cO_X} M$ we have
$$\mu (E')\le \mu (E).$$
We say that $(E, \nabla)$ is \emph{slope stable} if we have
stronger inequality $\mu (E')< \mu (E)$ for every proper
$\cO_X$-submodule $E'\subset E$ preserved by $\nabla$ and such
that $\rk E'<\rk E$. In much the same way we can introduce notions
of slope (semi)stability for $M$-Higgs sheaves and systems of
$M$-Hodge sheaves. In each case to define (semi)stability we use
only subobjects in the corresponding category.

\medskip

Let us fix a smooth $k$-Lie algebroid $L$ on $X$. We have a
natural action of $\Gm$ on $\Omega_L$-Higgs sheaves given by
sending $(E, \theta)$ to $(E, t\theta)$ for $t\in \Gm$. The
following lemma is a simple generalization of the well known fact
in case of usual Higgs bundles (see, e.g., \cite[Lemma 4.1]{Si2})
but we include proof for completeness.  The assertion in the
positive characteristic case is slightly different to that of
\cite[Lemma 4.1]{Si2}. The difference comes from the fact that for
$k=\bar{\FF} _p $ every $t\in k^*$ is a root of unity.

\begin{Lemma}
A rank $r$ torsion free $\Omega_L$-Higgs sheaf $(E, \theta)$ is a
fixed point of the $\Gm$-action if and only if it has a structure
of system of $\Omega_L$-Hodge sheaves.
\end{Lemma}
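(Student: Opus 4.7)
The plan is to prove the two directions separately. The $(\Leftarrow)$ direction is explicit, while the $(\Rightarrow)$ direction requires converting the fixed-point hypothesis into a scheme-theoretic $\Gm$-action on $E$ and then reading off the Hodge decomposition as its weight decomposition.

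For $(\Leftarrow)$, starting from a decomposition $E=\bigoplus_j E^j$ with $\theta(E^j)\subseteq E^{j-1}\otimes\Omega_L$, I would define a family of automorphisms $\phi_t$ on $E_{\Gm\times X}$ by multiplication by $t^{-j}$ on each summand $E^j$. A direct check on sections of $E^j$ gives $(\phi_t\otimes\id_{\Omega_L})\circ\theta=t\theta\circ\phi_t$, so $\phi_t$ realizes an isomorphism $(E,\theta)\xrightarrow{\sim}(E,t\theta)$ over $\Gm\times X$, showing that the class $[(E,\theta)]$ is $\Gm$-fixed.

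For $(\Rightarrow)$, I plan to produce a genuine $\Gm$-action on $E$ from the fixed-point hypothesis and extract the Hodge structure as its weight decomposition. Via the spectral/BNR construction, $(E,\theta)$ corresponds to a coherent sheaf $\cF$ on $\VV(L)$ whose support misses the divisor at infinity; the Higgs-side $\Gm$-action $\theta\mapsto t\theta$ corresponds to the fiberwise scaling action on $\VV(L)$, and systems of $\Omega_L$-Hodge sheaves correspond exactly to $\Gm$-equivariant sheaves on $\VV(L)$ with respect to this scaling. The problem therefore reduces to equivariantizing $\cF$.

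Concretely, using the moduli framework recalled in Section~1 I would first replace $(E,\theta)$ by a polystable representative of its $S$-equivalence class and use the Jordan--H\"older filtration to reduce to the case in which $(E,\theta)$ is stable, so ${\rm Aut}(E,\theta)=k^\ast$. Then $\underline{\rm Isom}((E,t\theta),(E,\theta))\to\Gm$ is a $\Gm$-torsor on $\Gm$, trivial because $\Pic(\Gm)=0$, giving an algebraic section $t\mapsto\phi_t$. After rescaling by a central scalar so that $\phi_1=\id$, the $\phi_t$ satisfy the multiplicativity $\phi_{st}=\phi_s\phi_t$ up to a central $2$-cocycle valued in $\Gm$; since $\Ext^1(\Gm,\Gm)=0$ in the category of commutative group schemes this cocycle is a coboundary and can be absorbed, producing a genuine $\Gm$-action on $E$ under which $\theta$ transforms with weight $1$. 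Since $\Gm$ is linearly reductive in every characteristic, the action yields a weight decomposition $E=\bigoplus_j E^j$, and the intertwining identity forces $\theta(E^j)\subseteq E^{j-1}\otimes\Omega_L$, giving the required system of Hodge sheaves.

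The main obstacle is constructing and rigidifying the family $\{\phi_t\}$ scheme-theoretically. This is precisely the point at which the positive characteristic argument must diverge from Simpson's classical one: in characteristic zero one can shortcut the group-scheme subtleties by diagonalising a single $\phi_{t_0}$ for a $t_0\in k^\ast$ of infinite order, but over $\bar\FF_p$ every element of $k^\ast$ is torsion, so such a single isomorphism can only yield a $\mu_m$-grading rather than the $\ZZ$-grading demanded by the notion of a system of Hodge sheaves. One must therefore work with the group scheme $\Gm$ itself throughout and not with its $k$-points, which makes the torsor-triviality and cocycle-normalization steps the technical heart of the proof.
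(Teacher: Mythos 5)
Your $(\Leftarrow)$ direction is fine, but the $(\Rightarrow)$ direction breaks at its very first step. The lemma makes no semistability assumption on $(E,\theta)$ --- and where it is used (Corollary \ref{Higgs=Hodge}) it is applied to the maximal destabilizing Higgs subsheaf of a Hodge system, which is semistable but typically neither stable nor polystable --- so there is no $S$-equivalence class or Jordan--H\"older filtration available to you. Worse, even when $(E,\theta)$ is semistable, replacing it by the polystable representative of its $S$-equivalence class changes the isomorphism class of the underlying sheaf: you would be proving that the associated graded of $E$ carries a Hodge decomposition, not that $E$ does, and there is no evident way to descend the grading from the Jordan--H\"older factors back to $E$ (doing this for an extension of $\Gm$-fixed objects is essentially the lemma itself, so the reduction is circular). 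The fixed-point hypothesis here is an honest isomorphism $(E,\theta)\simeq (E,t\theta)$ for each $t$, not an identification of $S$-equivalence classes in a coarse moduli space, and the proof must work directly with $E$.

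Your diagnosis of the positive-characteristic difficulty is also off, and this is what makes the heavy machinery unnecessary. The paper's proof does exactly what you dismiss as impossible: it diagonalizes a single $f=\phi_{t_0}$. The point is quantitative --- one does not need $t_0$ of infinite order, only of order greater than $r$. Normality and projectivity of $X$ force the coefficients of the characteristic polynomial of $f$ to be constant, so $E=\bigoplus E_\lambda$ with $E_\lambda=\ker(f-\lambda)^r$, and the relation $f\theta=t_0\theta f$ gives $\theta(E_\lambda)\subseteq E_{t_0\lambda}\otimes\Omega_L$. Since there are at most $r$ eigenvalues while $\lambda, t_0\lambda,\dots,t_0^r\lambda$ are pairwise distinct, every multiplicative string of eigenvalues has a gap; a maximal gap-free string is a Higgs-direct summand that is already a system of $\Omega_L$-Hodge sheaves, and one concludes by induction on the rank. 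In other words, the $\mu_m$-grading you worry about (for $m>r$) refines automatically to a $\ZZ$-grading because the number of nonzero weight pieces is bounded by $r$. The torsor-triviality and cocycle-normalization steps ($\Pic(\Gm)=0$, vanishing of $\Ext^1(\Gm,\Gm)$, linear reductivity of $\Gm$) are therefore not the heart of the proof; as you have set them up they only become available after the invalid reduction to the stable case, so the argument as written does not close.
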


\begin{proof}
Taking reflexivization we can assume that $E$ is reflexive. By
assumption for every $t\in \Gm$ there exists an isomorphism of
$\cO _X$-modules $f:E\to E$ (depending on $t$) such that
$f\theta=t\theta f$. On the subset $U$ where $E$ is locally free,
the coefficients of the characteristic polynomial of $f$ define
sections of $\cO _X$. Since $X$ is normal and projective we have
$\cO _X(U)=\cO_X(X)=k$, so they are constant. Hence we can
decompose $E$ into eigensubsheaves $E=\bigoplus E_{\lambda}$,
where $E_{\lambda}=\ker (f-\lambda)^r$ for $\lambda \in k^*$
(eigenvalue $0$ does not occur as $f$ is an isomorphism). Since
$(f-t\lambda)^r\theta = t^r \theta (f-\lambda)^r$, the Higgs field
$\theta$ maps $E_{\lambda}$ to $E_{t\lambda}$. If we take $t$ such
that $t^j\ne 1$ for $j=0,..., r $ then for every eigenvalue
$\lambda$ the elements $\lambda, t\lambda,..., t^r\lambda$ are
pairwise distinct. So there exists $j_0$ such that
$t^{j_0}\lambda$ is an eigenvalue but $t^{j_0-1}\lambda$ is not an
eigenvalue. Then $E^i=\bigoplus_{j_0\le j\le i} E_{t^j\lambda}$
defines a system of $\Omega_L$-Hodge sheaves which is a direct
summand of $(E, \theta)$. So we can complete the proof by
induction on the rank $r$ of $E$.
\end{proof}

\begin{Corollary} \label{Higgs=Hodge}
A system of $\Omega_L$-Hodge sheaves $(E, \theta)$ is slope (or
Gieseker) semistable if and only if it is slope (respectively,
Gieseker) semistable as an $\Omega_L$-Higgs sheaf.
\end{Corollary}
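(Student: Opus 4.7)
The plan is to use the natural filtration that any Higgs subsheaf inherits from the Hodge decomposition to produce a sub-system of Hodge sheaves with the same slope and Hilbert polynomial. One direction is immediate: a sub-system of Hodge sheaves is in particular an $\Omega_L$-Higgs subsheaf, so Higgs semistability trivially implies Hodge semistability. The content is in the converse.

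Assume $(E, \theta)$ is semistable as a system of $\Omega_L$-Hodge sheaves, with decomposition $E = \bigoplus_j E^j$ and $\theta(E^j) \subset E^{j-1}\otimes \Omega_L$. Given an arbitrary Higgs subsheaf $F\subset E$, I would introduce the decreasing filtration $F_i = F \cap \bigoplus_{j\ge i} E^j$, observe that the quotient $F_i/F_{i+1}$ injects naturally into $E^i$, and assemble its images $F'^i\subset E^i$ into the ``associated graded'' subsheaf $F' := \bigoplus_i F'^i \subset E$.

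The main step is to check that $F'$ is actually a sub-system of $\Omega_L$-Hodge sheaves, i.e.\ that $\theta (F'^i)\subset F'^{i-1}\otimes \Omega_L$. This reduces to showing $\theta(F_i)\subset F_{i-1}\otimes \Omega_L$: since $\theta$ shifts degree by $-1$ one has $\theta(F)\subset \bigoplus_{j\ge i-1}E^j\otimes \Omega_L$ on $F_i$, and since $\Omega_L$ is locally free one has
$$ \bigl(F\otimes \Omega_L\bigr)\cap \Bigl(\bigoplus_{j\ge i-1}E^j\Bigr)\otimes \Omega_L=F_{i-1}\otimes \Omega_L,$$
so that $\theta(F_i)$ lies in $F_{i-1}\otimes \Omega_L$, whose projection onto $E^{i-1}\otimes \Omega_L$ is contained in $F'^{i-1}\otimes \Omega_L$ by the very definition of $F'^{i-1}$. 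Once this is in hand, additivity of rank, degree and Hilbert polynomial across the finite filtration $\{F_i\}$ gives $\mu(F')=\mu(F)$ and $P(F')=P(F)$, and applying the assumed semistability of $(E,\theta)$ as a system of Hodge sheaves to the Hodge sub-system $F'$ yields the required inequality for $F$. The Gieseker case is verbatim the same with normalized Hilbert polynomials replacing slopes.

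The main obstacle is precisely the compatibility of the shifted-filtration construction with $\theta$: everything else is bookkeeping in filtered/graded sheaves. Once that compatibility is verified the Corollary follows formally by ``pulling back'' the testing inequality along the slope- and Hilbert-polynomial-preserving assignment $F\mapsto F'$, which lands inside the smaller class of Hodge sub-systems.
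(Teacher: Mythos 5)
Your proof is correct, but it takes a genuinely different route from the paper. The paper's argument is a two-line reduction: it suffices to show that the maximal destabilizing $\Omega_L$-Higgs subsheaf of a system of Hodge sheaves is itself a system of Hodge sheaves, and this follows because that subsheaf is unique, hence preserved by the natural $\Gm$-action, hence a system of Hodge sheaves by the preceding lemma (the one characterizing $\Gm$-fixed points). You instead work with an \emph{arbitrary} Higgs subsheaf $F$ and pass to the associated graded $F'=\bigoplus F'^i$ of the induced filtration $F_i=F\cap\bigoplus_{j\ge i}E^j$; the key compatibility $\theta(F_i)\subset F_{i-1}\otimes\Omega_L$ is exactly right (it uses that $F$ is $\theta$-invariant, that $\theta$ has degree $-1$, and that tensoring with the locally free $\Omega_L$ commutes with intersections), and additivity of rank, degree and Hilbert polynomial then gives $\mu(F)=\mu(F')\le\mu(E)$. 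Your approach is more elementary and self-contained: it does not invoke the $\Gm$-fixed-point lemma or the uniqueness of the maximal destabilizing subsheaf, and it sidesteps the (mildly delicate) point in the paper's proof that the Hodge structure produced on the destabilizing subsheaf by the lemma is compatible with the given grading of $E$. What the paper's route buys is brevity and the reusable observation that canonical subobjects of $\Gm$-fixed points are again $\Gm$-fixed, which it exploits again later; what yours buys is a direct, filtration-theoretic argument that works verbatim for any testing subobject.
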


\begin{proof}
It is sufficient to prove that the maximal destabilizing
$\Omega_L$-Higgs subsheaf of a system of $\Omega_L$-Hodge sheaves
$(E, \theta)$ is a system of $\Omega_L$-Hodge sheaves. This
follows from the above lemma and the fact that the maximal
destabilizing $\Omega_L$-Higgs subsheaf is unique so it is
preserved by the natural $\Gm$-action.
\end{proof}

\subsection{Hitchin's morphism for moduli spaces of $L$-coHiggs
sheaves}

Let $G$ be a quasi-coherent $\cO_S$-module. Consider the functor
which to an $S$-scheme $T$ associates $\Hom _{\cO_T}(G_T, \cO_T)$.
It is representable by the $S$-scheme $\VV (G)$. In particular,
for $\pi :T=\VV (G)\to S$ we get the tautological homomorphism
$$\lambda_G\in \Hom _{\cO_
{\VV (G)}}(\pi^*G, \cO_ {\VV (G)})=\Hom _{\cO_S}(G, \pi _*\cO_
{\VV (G)})=\Hom _{\cO_S-\rm{alg}}(S^{\bullet}G, S^{\bullet}G)$$
corresponding to the identity on $S^{\bullet}G$.

If $G$ is a locally free sheaf of finite rank then $\VV (G)\to S$
is a vector bundle with sheaf of sections isomorphic to $G^*$.

\medskip

The following lemma was explained to the author by C. Simpson:

\begin{Lemma}\label{Simpson}
Let $f:X \to S$ be a flat projective morphism of noetherian schemes and
let $G$ be a locally free sheaf on $X$. Then the functor
$\underline{H}^0(X/S, G)$ which to an $S$-scheme $h: T\to S$
associates $H^0(X_T/T, G_T)$ is representable by an $S$-scheme.
\end{Lemma}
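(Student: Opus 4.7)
My plan is to reduce the representability problem, via Grothendieck's cohomology and base change, to the representability of a kernel of a map between two locally free sheaves on $S$, which is a vector bundle question solved by means of the functor $\VV(\cdot)$ recalled earlier in the subsection.

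\textbf{Step 1 (Grothendieck complex).} Since $G$ is locally free and $f:X\to S$ is flat and projective with $S$ noetherian, $G$ is flat over $S$. By Grothendieck's theorem on cohomology and base change (see, e.g., EGA III$_2$, 7.7.6, or Mumford's \emph{Abelian Varieties}, II.5), there exists a bounded complex $K^{\bullet}=(K^0\xrightarrow{d^0} K^1\xrightarrow{d^1}\cdots)$ of finite locally free $\cO_S$-modules such that for every morphism $h:T\to S$ of locally noetherian schemes,
$$R^if_{T,*}(G_T)\;\cong\;\mathcal{H}^i(h^*K^{\bullet})$$
functorially in $T$. In particular, taking $i=0$ and passing to global sections over $T$ (and using left exactness of $H^0$),
$$H^0(X_T/T,\,G_T)\;=\;H^0(T,f_{T,*}G_T)\;=\;\ker\!\bigl(H^0(T,h^*K^0)\xrightarrow{\,h^*d^0\,}H^0(T,h^*K^1)\bigr).$$

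\textbf{Step 2 (representability of each piece).} Recall from the paragraph preceding the lemma that for a locally free $\cO_S$-module $E$ of finite rank the $S$-scheme $\VV(E^*)$ represents the functor $T\mapsto H^0(T,h^*E)$, since $T$-points of $\VV(E^*)=\Spec(S^{\bullet}E^*)$ correspond bijectively to $\cO_T$-linear maps $h^*E^*\to \cO_T$, i.e.\ to global sections of $h^*E$. Applying this to $K^0$ and $K^1$, both functors $T\mapsto H^0(T,h^*K^i)$ are represented by the vector bundles $\VV((K^i)^*)$, and the dual $(d^0)^*:(K^1)^*\to (K^0)^*$ induces an $S$-morphism $\Phi:\VV((K^0)^*)\to \VV((K^1)^*)$ which, on $T$-points, is the map $s\mapsto h^*d^0\circ s$.

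\textbf{Step 3 (kernel as fiber product).} Let $z:S\to \VV((K^1)^*)$ be the zero section (a closed immersion, since $\VV((K^1)^*)\to S$ is affine and hence separated). Define
$$\underline{H}^0(X/S,G)\;:=\;\VV((K^0)^*)\times_{\VV((K^1)^*)} S,$$
a closed subscheme of $\VV((K^0)^*)$, hence an $S$-scheme. By the universal properties collected in Steps 1 and 2, its $T$-points are exactly the sections $s\in H^0(T,h^*K^0)$ with $h^*d^0\circ s=0$, which by Step 1 is canonically identified with $H^0(X_T/T,G_T)$. Functoriality in $T$ is immediate from the functoriality in the cited base change theorem.

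\textbf{Main obstacle.} The real content is Grothendieck's theorem furnishing the complex $K^{\bullet}$ with universal base change property; everything after that is essentially formal manipulation with the functor $\VV(\cdot)$ and fiber products. One should verify that the noetherian and flatness hypotheses needed to invoke the cohomology-and-base-change complex are in force, which they are (as $G$ is locally free and $X$ is flat projective over the noetherian $S$).
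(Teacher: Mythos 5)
Your argument is correct, and it reaches the same final configuration as the paper --- realizing $\underline{H}^0(X/S,G)$ as the kernel of a morphism of vector bundles over $S$, i.e.\ the fiber product of $\VV((K^0)^*)\to\VV((K^1)^*)$ with the zero section --- but it produces the two-term complex by a different mechanism. You invoke the Grothendieck cohomology-and-base-change complex $K^\bullet$ of finite locally free $\cO_S$-modules computing $R^if_{T,*}G_T$ universally; the paper instead builds an explicit resolution on $X$ itself: it embeds $G$ as a subbundle of a direct sum $K_1$ of relatively very ample line bundles (chosen so that all higher direct images of $K_1$ vanish), embeds $K_1/G$ into a similar $K_2$, and then uses left exactness of $H^0$ together with Grauert's theorem to identify $\underline{H}^0(X/S,G)(T)$ with the kernel of $\underline{H}^0(X/S,K_1)(T)\to\underline{H}^0(X/S,K_2)(T)$, each term being represented by a bundle $\VV((f_*K_i)^*)$. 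The paper's route is more hands-on and avoids citing the full strength of the Grothendieck complex (in effect it constructs the degree-$\le 1$ part of such a complex by hand); your route is shorter and more general, at the cost of resting on a heavier black box. The one point you should make explicit is that the Grothendieck complex is, in the standard references, only constructed over an affine base, so a priori $K^\bullet$ exists only locally on $S$; since $\underline{H}^0(X/S,G)$ is a Zariski sheaf and your local representing objects are affine over $S$ and glue canonically by Yoneda, this is harmless, but it deserves a sentence. (A cosmetic remark: with the paper's convention $\VV(E)=\Spec(S^\bullet E)$ representing $T\mapsto\Hom_{\cO_T}(E_T,\cO_T)$, your dualizations $\VV((K^i)^*)$ are the correct ones.)
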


\begin{proof}
Since certain twist of $G^*$ by a relatively very ample line bundle
is relatively globally generated, we can embed $G$ as a subbundle into a
direct sum $K_1$ of relatively very ample line bundles. Then we can again embed
the quotient $K_1/G$ into $K_2$ with $K_2$ a direct sum of relatively very ample
bundles. Then for any $S$-scheme $T$ we have an exact sequence
$$0 \to \underline{H}^0(X/S, G)(T) \to \underline{H}^0 (X/S, K_1 )(T) \to \underline{H}^0(X/S, K_2)(T).$$
But we can assume that all the higher direct images of $K_1$ vanish and then by the Grauert's
theorem $\underline{H}^0 (X/S, K_1 )$ is representable by the bundle $\VV (f_*K_1)\to S$.
Similarly, $\underline{H}^0 (X/S, K_1 )$ is representable by the bundle $\VV (f_*K_2)\to S$.
Therefore $\underline{H}^0(X/S, G)$  is represented by the kernel of the map between bundles.
This is a vector subscheme of $\VV ( f_*K_1 )\to S$.
\end{proof}

\medskip
We will also need the following well-known lemma:

\begin{Lemma}\label{extension}
Let $f: X\to S$ be a flat family of irreducible $d$-dimensional
schemes satisfying Serre's condition $(S_2)$. Let $E$ be an
$S$-flat coherent $\cO_X$-module such that $E\otimes k(s)$ is
pure of dimension $d$ for every point $s\in S$.
Then there exists a relatively big open subset
$j: U\subset X$ such that $E^{**}\to j_*(E|_U)$ is an isomorphism.
\end{Lemma}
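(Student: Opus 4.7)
The plan is to take $U$ to be the open locus where $E\to E^{**}$ is an isomorphism, verify that this $U$ is relatively big, and then exploit that $E^{**}$ satisfies Serre's condition $(S_2)$.

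Set $U\subset X$ to be the complement of the support of $\ker(E\to E^{**})\oplus\coker(E\to E^{**})$. This is open, and by construction $E|_U\to E^{**}|_U$ is an isomorphism. Since $E$ is $S$-flat with purely $d$-dimensional fibres, it is torsion-free as an $\cO_X$-module, so the kernel vanishes and only the cokernel matters. To show $U$ is relatively big I verify that every codimension-one point $x\in X$ lies in $U$. By $S$-flatness of relative dimension $d$, such an $x$ is either (i) the generic point of a fibre $X_s$ with $s$ of codimension one in $S$, in which case the local criterion of flatness combined with fibrewise purity forces $E_x$ to be free over $\cO_{X,x}$ (hence reflexive), or (ii) a codimension-one point of the generic fibre $X_\eta$, in which case $\cO_{X,x}=\cO_{X_\eta,x}$ is a one-dimensional Cohen--Macaulay local domain (since $X_\eta$ is irreducible and $(S_2)$) and $E_x=E_{\eta,x}$ is a finitely generated torsion-free module over it. The classical commutative-algebra fact that \emph{torsion-free modules over a one-dimensional CM local domain are reflexive} then forces $E_x=E_x^{**}$. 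Running the same case analysis inside each fibre $X_s$ (where $E_s$ is torsion-free on the irreducible $(S_2)$ scheme $X_s$) shows that $U\cap X_s$ is big in $X_s$ as well.

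The second ingredient is that $E^{**}$ is an $(S_2)$-sheaf on the $(S_2)$-scheme $X$. This is standard: a local presentation $\cO_X^m\to\cO_X^n\to E^*\to 0$ dualises to a left-exact sequence $0\to E^{**}\to\cO_X^n\to\cO_X^m$, from which $\mathrm{depth}\,E^{**}_x\geq\min(\mathrm{depth}\,\cO_{X,x},2)$ because $\cO_X$ itself is $(S_2)$. Once $E^{**}$ is $(S_2)$ and $U$ is big, the adjunction map $E^{**}\to j_*j^*E^{**}=j_*(E^{**}|_U)$ is an isomorphism, and combining with $E^{**}|_U=E|_U$ from the definition of $U$ gives $E^{**}\cong j_*(E|_U)$.

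The principal obstacle lies in the second step: verifying that the reflexive locus really exhausts all codimension-one points of $X$. This is precisely where the irreducibility and $(S_2)$ hypotheses on the fibres are used, since they are what make the relevant local rings one-dimensional CM local domains and thus permit the commutative-algebra fact to apply. Everything else, including the depth chase for $E^{**}$ and the $(S_2)$-extension principle, is routine once the correct $U$ is identified.
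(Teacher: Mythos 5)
The paper offers no proof of this lemma (it is quoted as ``well-known''), so there is no argument of the author's to compare yours against; I can only assess the proposal on its own terms. The second half of your argument is essentially sound: the second-syzygy presentation $0\to E^{**}\to \cO_X^n\to\cO_X^m$ and the depth inequality do give $\mathrm{depth}_x\,E^{**}\ge 2$ at every point $x$ of fibre-codimension $\ge 2$, and hence $E^{**}\to j_*(E^{**}|_U)$ is an isomorphism for any relatively big $U$. (One quibble: $\cO_X$ itself need not be $(S_2)$ if $S$ is bad; what you actually need, and what is true, is $\mathrm{depth}\,\cO_{X,x}=\mathrm{depth}\,\cO_{S,s}+\mathrm{depth}\,\cO_{X_s,x}\ge 2$ at the points of fibre-codimension $\ge 2$, using flatness of $f$ and the $(S_2)$ hypothesis on the fibres.)

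The gap is in the first half, where you must show that the isomorphism locus of $E\to E^{**}$ is relatively big, and it is twofold. First, the ``classical commutative-algebra fact'' you invoke --- that torsion-free modules over a one-dimensional Cohen--Macaulay local domain are reflexive --- is false. Take $A=k[[t^3,t^4,t^5]]$ and $M=(t^3,t^4)$: then $M^*=(A:M)=k[[t]]$ and $M^{**}=(A:k[[t]])=t^3k[[t]]$, which contains $t^5$ while $M=kt^3+kt^4+t^6k[[t]]$ does not. Reflexivity of maximal Cohen--Macaulay modules in dimension one requires the local ring to be Gorenstein (e.g.\ regular), whereas $(S_2)$ of the fibres only gives Cohen--Macaulayness in codimension one; so under the stated hypotheses the reflexive locus of $E$ can already miss a codimension-one point of the generic fibre. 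Second, you are checking the wrong set of points: ``relatively big'' means the complement has codimension $\ge 2$ in \emph{every fibre}, so you must control all $x$ with $\dim\cO_{X_{f(x)},x}\le 1$, and a codimension-one point of a closed fibre has codimension $\ge 2$ in $X$ and is covered by neither of your cases (i) and (ii). The sentence ``running the same case analysis inside each fibre'' does not repair this: it would address the map $E_s\to(E_s)^{**}$ computed on the fibre, whereas your $U$ is cut out by $E\to E^{**}$ on $X$, and $(E^{**})|_{X_s}$ does not agree with $(E_s)^{**}$ in general. To make your strategy work one needs either fibres that are regular (or Gorenstein) in codimension one, or a relatively big open subset on which $E$ is already known to be locally free --- which is in fact the situation in the paper's only application of the lemma, where the sheaf being extended is locally free.
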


\medskip

Consider a flat projective morphism $f: X\to S$ of noetherian schemes.
Let $L$ be a smooth $\cO_S$-Lie algebroid on $X$ and let us recall that
$\Omega_L=L^*$. Consider the functor which to an $S$-scheme $h:
T\to S$ associates
$$\bigoplus _{i=1}^rH^0(X_T/T, S^i\Omega_{L,T}).$$
By Lemma \ref{Simpson} this functor is representable by an
$S$-scheme $\VV ^L  (X/S, r)\to S$.

Let us also assume that $X/S$ is a family of $d$-dimensional
varieties satisfying Serre's condition $(S_2)$. If $T$ is an
$S$-scheme then $X_T/T$ is also a flat family of $d$-dimensional
varieties satisfying Serre's condition $(S_2)$.

Assume that $L$ is a trivial $\cO_S$-Lie algebroid and consider a
family $(E, \theta : E\to E\otimes \Omega_{L,T})$ of $L$-coHiggs
sheaves of pure dimension $d=\dim (X/S)$ on the fibres of $X_T\to
T$. Then there exists an open subset $U\subset X_T$ such that $E$
is locally free on $U$ and the intersection of $U$ with any fiber
of $X_T\to T$ has a complement of codimension at least $2$. Let us
consider ${\bigwedge}^i\theta|_U: {\bigwedge}^i (E|_U)\to
{\bigwedge}^i (E|_U\otimes _{\cO _U} \Omega_{L,T}|_U)$. We have a
well defined surjection ${\bigwedge}^i ( E|_U\otimes _{\cO _U}
\Omega_{L,T}|_U) \to {\bigwedge}^i E|_U\otimes _{\cO_U}
S^i\Omega_{L,T}|_U$, given by
$$(e_1\otimes \lambda_1 )\wedge ...\wedge (e_i\otimes \lambda_i) \to
(e_1\wedge...\wedge e_i)\otimes (\lambda_1...\lambda_i),$$
where $e_1,..., e_i\in E$ and $\lambda_1,..., \lambda_i\in
\Omega_{L,T}$. So we get a morphism of sheaves
$$\cO_U\to \cEnd _{\cO_X}({\bigwedge}^iE)|_U\otimes_{\cO_U}S^i\Omega_{L,T}|_U
\mathop{\longrightarrow}^{(-1)^i\Tr \otimes \id}
S^i\Omega_{L,T}|_U$$ The corresponding section $\sigma_i
(\theta|_U)\in H^0(U, S^i\Omega_{L,T}|_U)$ is just an evaluation
of the $i$-th elementary symmetric polynomial on $\theta|_U$. By
Lemma \ref{extension} this section extends uniquely to section
$\sigma_i (\theta )\in H^0(X_T/T, S^i \Omega_{L,T})$. In this way
we can define a $T$-point $\sigma(E,\theta)=(\sigma_1
(\theta),...,\sigma_r (\theta ))$ of $\VV ^L(X/S, r)$.

Let $P$ be a polynomial of degree $d=\dim (X/S)$ corresponding to
(some) rank $r$ torsion free sheaves on the fibres of $X\to S$.
Consider the moduli space $M _{\rm Dol}^L(X/S, P)$ of Gieseker
semistable $L$-coHiggs sheaves with Hilbert polynomial $P$. Then
the above construction defines a morphism of functors inducing the
corresponding morphism of coarse moduli spaces $H_L:M_{\rm
Dol}^L(X/S, P)\to \VV ^L(X/S, r)$. This morphism is called
\emph{Hitchin's morphism}.

There is also a stack theoretic version of Hitchin's morphism. The
moduli stack of $L$-coHiggs sheaves is defined as a lax functor
between $2$-categories by
$$\begin{array}{cccc}
\cM^L_{\rm Dol}(X/S, P): &(\Sch /S)&\to& (\hbox{groupoids})\\
&T&\to&\cM(T),
\end{array}$$
where $\cM(T)$ is the category whose objects are $T$-flat families
of pure $d$-dimensional $L$-coHiggs sheaves with Hilbert
polynomial $P$ on the fibres of $X_T\to T$, and whose morphisms
are isomorphisms of coherent sheaves. Then $\cM^L_{\rm Dol}(X/S,
P)$ is an algebraic stack for the fppf topology on $(\Sch /S)$. As
above we can construct Hitchin's morphism $\cM^L_{\rm Dol}(X/S, P)
\to \VV ^L(X/S, r)$. By abuse of notation, we also denote this
morphism by $H_L$.

\medskip

As in the usual Higgs bundle and characteristic zero case, one can
construct the total spectral scheme $\WW ^L(X/S,r)\subset
\VV(L)\times _S \VV ^L(X/S, r)$, which is finite and flat over
$X\times _S \VV ^L (X/S, r)$. This subscheme has the property that
for any family $(E, \theta : E\to E\otimes \Omega_{L,T})$ of
$L$-coHiggs sheaves of pure dimension $d$ on the fibres of $X_T\to
T$, the corresponding coherent sheaf on $\VV (L_T)$ is
set-theoretically supported on $\WW ^L(X/S,r)\times _{\VV
^L(X/S,r)}T$. This can be seen as follows. Let $x$ be a geometric
point of $X$ at which $E$ is locally free. Then
$S^{\bullet}L\otimes k(x)$ acts on $V=E\otimes k(x)$ via $\theta
(x)$. Let us recall that over an algebraically closed field any
finitely dimensional vector space which is irreducible with
respect to a set of commuting linear maps has dimension $1$.
Therefore $V$ has a filtration $0=V_0\subset V_1\subset ...\subset
V_r=V$ with quotients $V^i=V_i/V_{i-1}$ of dimension $1$ over
$k(x)$ and such that $\theta(x)$ acts on $V^i$ as multiplication
by $\lambda_i\in (L\otimes k(x))^*$. It is clear from our
definition that $\tau \in L\otimes k(x)$ acts on $V$ via
$\theta_\tau :=\theta (x) ^T(\tau)$ in such a way that in the
characteristic polynomial
$$\det (t\cdot I-\theta_{\tau})=t^r+\sigma_1(\theta_\tau)t^{r-1}
+...+\sigma_r(\theta_\tau)$$ we have $\sigma_i(\theta_\tau)=(-1)^i
\sum_{1\le j_1<...<j_i\le r}\lambda_{j_1}...\lambda_{j_i}$. This
and the Cayley--Hamilton theorem show that the coherent sheaf on
$\VV (L_T)$ corresponding to $(E,\theta)$ has a scheme-theoretic
support contained in $\WW ^L(X/S,r)\times _{\VV^L(X/S,r)}T$ and it
coincides with it set-theoretically.

\medskip

Note that in the curve case there exists a different
interpretation of Higgs bundles using cameral covers. Such an
approach allows to deal with general reductive groups (see
\cite{DG} for the characteristic zero case). In positive
characteristic the analogous construction requires some
restrictions on the characteristic of the base field.

\medskip

The following theorem can be proven in a similar way as the usual
characteristic zero version \cite[Theorem 6.11]{Si4}. It also
follows from Langton's type Theorem \ref{Langton}.

\begin{Theorem}
Hitchin's morphism $H_L: M_{\rm Dol}^L(X/S, P)\to \VV ^L(X/S, r)$
is proper.
\end{Theorem}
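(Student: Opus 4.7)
The plan is to verify the valuative criterion of properness. The morphism $H_L$ is automatically of finite type, since both source and target are of finite type over $S$, and separated, since $M_{\rm Dol}^L(X/S, P)$ is quasi-projective over $S$; so I only need to check universal closedness. By the standard valuative criterion for moduli spaces that corepresent their functors, it suffices to consider a discrete valuation ring $R$ with fraction field $K$ equipped with an $S$-morphism $\Spec R\to \VV^L(X/S, r)$, together with a compatible $K$-point $\Spec K\to M_{\rm Dol}^L(X/S, P)$, and to produce an $R$-point of $M_{\rm Dol}^L(X/S, P)$ making the resulting square commute.

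The $K$-point corresponds to a Gieseker semistable $L$-coHiggs sheaf $(E_K, \theta_K)$ on $X_K$ with Hilbert polynomial $P$, and compatibility says that its characteristic section $\sigma(E_K, \theta_K)\in \bigoplus_{i=1}^r H^0(X_K, S^i\Omega_{L,K})$ is the restriction of a global section $\sigma_R$ on $X_R$. Set $\WW_R := \WW^L(X/S, r)\times_{\VV^L(X/S, r)}\Spec R$. By construction $\WW_R$ is finite and flat over $X_R$, in particular projective over $\Spec R$. Via the spectral correspondence described above, $(E_K, \theta_K)$ corresponds to a coherent sheaf $\cF_K$ scheme-theoretically supported on $\WW_K := \WW_R\times_R K$.

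The next step is to extend $\cF_K$ to an $R$-flat coherent sheaf $\cF_R$ on $\WW_R$. Let $j: \WW_K\hookrightarrow \WW_R$ denote the open immersion; since $\WW_R$ is noetherian, $j_*\cF_K$ is the filtered union of its coherent $\cO_{\WW_R}$-subsheaves, and I can choose one that restricts to $\cF_K$ on the generic fibre. Killing its $R$-torsion makes it $R$-flat. Pushing this $\cF_R$ forward along the finite morphism $\WW_R\to X_R$ yields an $R$-flat coherent $\cO_{X_R}$-module $E_R$ equipped with an $L$-coHiggs structure $\theta_R$ extending $(E_K, \theta_K)$ whose characteristic section equals $\sigma_R$.

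The family $(E_R, \theta_R)$ may fail to be Gieseker semistable on the special fibre, so I invoke the Langton-type Theorem \ref{Langton} to replace it by an $R$-flat family $(E_R', \theta_R')$ of Gieseker semistable $L$-coHiggs sheaves sharing the generic fibre $(E_K, \theta_K)$. Its characteristic section $\sigma(E_R', \theta_R')$ lies in $H^0(X_R, \bigoplus_{i=1}^r S^i\Omega_{L,R})$, which is $R$-torsion-free since $\bigoplus_i S^i\Omega_{L,R}$ is locally free and $X_R\to \Spec R$ is flat with geometrically connected fibres, so $\sigma(E_R', \theta_R')=\sigma_R$. Therefore $(E_R', \theta_R')$ defines the sought $R$-point of $M_{\rm Dol}^L(X/S, P)$ lying over $\sigma_R$. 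I expect the main obstacle to be the coherent extension step of the previous paragraph: without the given extension of the characteristic polynomial to $R$, there is no reason to expect any $R$-flat extension of $(E_K, \theta_K)$ to exist as a coHiggs sheaf on $X_R$. This is precisely where the finiteness of $\WW^L(X/S, r)\to X\times_S \VV^L(X/S, r)$ becomes essential: it renders $\WW_R$ projective over $\Spec R$, enabling standard coherent extension on $\WW_R$, after which Theorem \ref{Langton} restores semistability without disturbing the Hitchin image.
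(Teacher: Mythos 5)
Your argument is correct and is exactly the route the paper has in mind: the paper offers no proof beyond pointing to Simpson's characteristic-zero argument \cite[Theorem 6.11]{Si4} (whose content is the spectral-scheme extension step over the given point of the Hitchin base) and to the Langton-type Theorem \ref{Langton} (the semistable-reduction step), and you have combined precisely these two ingredients. Your closing remark correctly identifies the key point, namely that finiteness of $\WW^L(X/S,r)$ over $X\times_S \VV^L(X/S,r)$ is what makes the $R$-flat coherent extension of the coHiggs sheaf possible before Langton's theorem is applied.
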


\subsection{Deformation of a Lie algebroid over an affine line.\label{deformation-over-line}}

Let $R$ be a commutative ring with unity. Let $f: X\to S$ be a
morphism of $R$-schemes. Let $\AA ^1_R=\Spec R[t]$ and let
$p_1:X\times _R\AA ^1_R\to X$ be the projection onto the first
factor.

Let us consider an $\cO_S$-Lie algebroid $L$ on $X$ and the
morphism $f\times \id :X\times _R \AA ^1 _R\to S\times _R \AA
^1_R$ of $R$-schemes. We can define an $\cO_{S\times _R\AA
^1_R}$-Lie algebroid $L^R$ on $X\times _R\AA ^1 _R$  by taking $
L^R:=p_1^*L$ with Lie bracket given by $[\cdot ,
\cdot]_{L^R}:=p_1^*[\cdot , \cdot]_L\otimes t $ and the anchor map
given by $\alpha ^R:=p_1^*\alpha \otimes t$.

The universal enveloping algebra of differential operators
$\Lambda^R_L:=\Lambda _{L^R}$ associated to $L^R$ can be
constructed as a subsheaf of $p_1^*\Lambda_L$ generated by
sections of the form $\sum t^i\lambda_i$, where $\lambda_i$ are
local sections of $\Lambda_{L,i}$.

If $R=k$ is a field and $s\in \AA^1(k)-\{0\}$ then the restricted
sheaf $\Lambda _L^R|_{X\times \{s\}}$ is naturally isomorphic to
$\Lambda_L$. The sheaf $\Lambda _L^R|_{X\times \{0\}}$ is
naturally isomorphic to the associated graded sheaf of algebras
$\Gr \Lambda_L$. This gives a deformation of $\Lambda_L$ to its
associated graded sheaf of algebras (or a quantization of the
commutative algebra $\Gr \Lambda_L$).

\medskip

Let $T$ be an $S$-scheme and let us fix $\lambda \in
H^0(T/R,\cO_{T})$. Let $E$ be a coherent $\cO _{X_T}$-module and
let $p_X$ and $p_T$ be the projections of $X\times _ST$ onto $X$
and $T$, respectively. Let $(M, d_M)$ be a coherent $\cO_X$-module
with an $\cO_S$-derivation.

Then we set $\ti M=p_X^*M$ and $d_{\ti M}= p_X^*d_M\cdot
p_T^*\lambda$. A $d_{\ti M}$-connection on $E$ is called a
\emph{$\lambda $-$d_M$-connection}. This generalizes the usual
notion of $\lambda$-connection.

For the constant section $\lambda=0\in H^0(T/R,\cO_{T})$ an
integrable $\lambda$-$d_M$-connection is just an $M$-Higgs field.
Similarly, for $\lambda =1\in H^0(T/R,\cO_{T})$ we recover the
notion of a $d_M$-connection.

\medskip

Assume that $L$ is a smooth $\cO_S$-Lie algebroid on $X$. Let us
fix  a morphism of $R$-schemes $T\to S\times _R \AA ^1_R$ and let
$\lambda \in H^0(T/R,\cO_{T})$ be the section corresponding to the
composition of $T\to S\times _R \AA ^1_R$ with the canonical
projection $S\times _R \AA ^1_R\to \AA^1 _R$. Since $T\times_{
S\times _R \AA ^1_R}X\times _R \AA ^1_R=X_T$,  an $L^R$-module
structure on a coherent $\cO_{X_T}$-module $E$ is equivalent to
giving an integrable $\lambda$-$d_{\Omega_L}$-connection.

\section{Lie algebroids in positive characteristic}

\subsection{Sheaves of restricted Lie algebras}

Let $R$ be a commutative ring (with unity) of characteristic $p$
and let $L$ be a Lie $R$-algebra. We define the universal Lie
polynomials $s_j$ by the formula
$$s_j(x_1,x_2)=-{\frac{1}{j}} \sum _{\sigma} \ad
x_{\sigma(1)}...\ad x_{\sigma(p-1)}(x_2)$$ in which we sum over
all $\sigma: \{1,...,p-1\}\to \{1,2\}$ taking $j$ times value $1$.

Let $A$ be an associative $R$-algebra. For $x \in A$ we define
$\ad (x):A\to A $ by the formula $(\ad (x))(y)=xy-yx$ for $y\in
A$. Then we have the following well known Jacobson's formulas:
$$\ad (x^p)=\ad (x)^p$$
$$(x+y)^p=x^p+y^p+\sum _{0<j<p}s_j(x,y).$$

\medskip

Let $X$ be a scheme over a scheme $S$ of characteristic $p>0$. A
\emph{sheaf of restricted $\cO_S$-Lie algebras on $X$} is a sheaf
of $\cO_S$-Lie algebras $(L, [\cdot, \cdot ])$ on $X$ equipped
with a $p$-th power operation $L\to L$, $x\to x^{[p]}$, which
satisfies the following conditions:
\begin{enumerate}
\item $(fx)^{[p]}=f^px^{[p]}$ for all local sections $f\in \cO_S$
and $x\in L$,
\item $\ad (x^{[p]})=(\ad(x))^{p}$ for $x\in L$,
\item $(x+y)^{[p]}=x^{[p]}+y^{[p]}+\sum _{0<j<p}s_j(x,y)$ for all
$x,y\in L$.
\end{enumerate}

A homomorphism of sheaves of restricted $\cO_S$-Lie algebras
$\varphi: L\to L'$ on $X$ is such a homomorphism of sheaves of
$\cO_S$-Lie algebras on $X$ that $\varphi(x^{[p]})=\varphi(x)^{[p]}$ for
all $x\in L$.

\medskip

Let $\cA$ be a sheaf of associative $\cO_S$-algebras on $X$. It
has a natural structure of a sheaf of restricted $\cO_S$-Lie
algebras on $X$ with bracket $[x,y]=xy-yx$ and $p$-th power
operation $x^{[p]}=x^p$ for local sections $x, y\in \cA$.

Now let $L$ be a sheaf of restricted $\cO_S$-Lie algebras on $X$.
For any homomorphism $\varphi : L\to \cA$ of sheaves of
$\cO_S$-Lie algebras on $X$ we can define $\psi : L\to \cA$ by
$\psi (x)=(\varphi (x))^{p}-\varphi (x^{[p]})$ for $x\in L$. The map $\psi$
measures how far is $\varphi$ from being a homomorphism of sheaves of restricted
$\cO_S$-Lie algebras on $X$.

\begin{Lemma} \label{p-curvature-for-Lie-algebras}
The map $\psi : L\to \cA$ is additive and its image commutes with
the image of $\varphi$. In particular, $[\psi (L), \psi (L)]=0$.
\end{Lemma}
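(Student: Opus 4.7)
The proof breaks into the three assertions, and each follows by a direct computation using the two Jacobson formulas stated just before the lemma, together with the hypothesis that $\varphi$ is a homomorphism of sheaves of $\cO_S$-Lie algebras.

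\textbf{Additivity of $\psi$.} I would expand $\psi(x+y)=\varphi(x+y)^p-\varphi((x+y)^{[p]})$. On the left factor, apply Jacobson's formula in the associative algebra $\cA$ to obtain
$$\varphi(x+y)^p=(\varphi(x)+\varphi(y))^p=\varphi(x)^p+\varphi(y)^p+\sum_{0<j<p}s_j(\varphi(x),\varphi(y)).$$
On the right factor, apply the restricted-Lie-algebra Jacobson formula inside $L$ and then push through $\varphi$ to obtain
$$\varphi((x+y)^{[p]})=\varphi(x^{[p]})+\varphi(y^{[p]})+\sum_{0<j<p}\varphi(s_j(x,y)).$$
Because $s_j$ is a universal Lie polynomial and $\varphi$ preserves the bracket, $\varphi(s_j(x,y))=s_j(\varphi(x),\varphi(y))$, so the two sums cancel and one is left with $\psi(x)+\psi(y)$.

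\textbf{Commutation with $\varphi(L)$.} The plan is to show $[\psi(x),\varphi(y)]=0$ for all local sections $x,y\in L$. Expanding,
$$[\psi(x),\varphi(y)]=[\varphi(x)^p,\varphi(y)]-[\varphi(x^{[p]}),\varphi(y)]=(\ad\varphi(x))^p(\varphi(y))-\ad(\varphi(x^{[p]}))(\varphi(y)),$$
using Jacobson's identity $\ad(a^p)=(\ad a)^p$ in $\cA$ for the first summand. Since $\varphi$ preserves brackets, an easy induction gives $(\ad\varphi(x))^k(\varphi(y))=\varphi((\ad x)^k(y))$, while the restricted structure gives $\ad(x^{[p]})=(\ad x)^p$ in $L$, so $\ad(\varphi(x^{[p]}))(\varphi(y))=\varphi((\ad x)^p(y))$ as well. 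The two terms therefore coincide and the commutator vanishes.

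\textbf{Vanishing of $[\psi(L),\psi(L)]$.} This is an immediate consequence of the previous step: since $\psi(x)$ commutes with every element of $\varphi(L)$, it commutes with arbitrary products of such elements, in particular with $\varphi(y)^p$ and with $\varphi(y^{[p]})$, hence with their difference $\psi(y)$.

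None of the three steps is substantively hard; the only potential pitfall is keeping track of the two distinct Jacobson formulas — one inside $L$ (using $[\cdot,\cdot]$ and $x^{[p]}$) and one inside $\cA$ (using $xy-yx$ and $x^p$) — and invoking the fact that $\varphi$ is a Lie homomorphism (not yet a restricted-Lie homomorphism) exactly at the point where one identifies $\varphi(s_j(x,y))$ with $s_j(\varphi(x),\varphi(y))$ and where one propagates $\ad$ through $\varphi$.
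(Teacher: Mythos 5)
Your proof is correct and follows essentially the same route as the paper: additivity by subtracting the two Jacobson formulas (in $\cA$ and in $L$, matched via $\varphi(s_j(x,y))=s_j(\varphi(x),\varphi(y))$), and commutation by comparing $(\ad\varphi(x))^p(\varphi(y))$ computed two ways. Your explicit derivation of the final claim $[\psi(L),\psi(L)]=0$ from the commutation statement (noting $\varphi(y^{[p]})\in\varphi(L)$) is a detail the paper leaves implicit, but the argument is identical in substance.
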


\begin{proof}
Let us take sections $x, y\in L(U)$ for some open subset $U\subset X$.
From Jacobson's formula in  $\cA$ we have
$$(\varphi(x+y))^p=\varphi (x)^p+\varphi (y)^p+\sum _{0<j<p}s_j(\varphi (x),\varphi (y))$$
On the other hand, from definition of a sheaf of restricted Lie algebras we have
$$\varphi((x+y)^{[p]})=\varphi (x^{[p]})+\varphi (y^{[p]})+\sum _{0<j<p}s_j(\varphi (x),\varphi (y)),$$
so subtracting these equalities we get  additivity of $\psi$.

Now we need to prove that
$[\psi (x), \varphi (y)]=0$.
But we have
$$[\varphi (x)^p, \varphi (y)]=\ad (\varphi (x)^p) (\varphi (y))=(\ad \varphi (x))^p (\varphi (y))$$
and
$$[\varphi (x^{[p]}), \varphi (y)]=\varphi([x^{[p]}, y])=\varphi (\ad (x^{[p]})(y))=
\varphi (\ad (x)^p (y))=(\ad \varphi (x))^p (\varphi (y)),$$
so subtracting yields the required equality.
\end{proof}

\medskip

The \emph{restricted universal enveloping algebra} $\cU ^{[p]}
_{\cO_S}(L)$ of a sheaf of restricted $\cO_S$-Lie algebras $L$ on
$X$ is the quotient of the universal enveloping algebra $\cU
_{\cO_S}(L)$ by the two-sided ideal generated by all elements of
the form $x^p-x^{[p]}$ for local sections $x\in L$.

If $S=X$ and $L$ is locally free as an $\cO_X$-module then $L$ is
contained in $\cU ^{[p]} _{\cO_X}(L)$. Moreover, if $x_1,...,x_r$
are local generators of $L$ as an $\cO_X$-module then
$x_1^{i_1}...x_r^{i_r}$ with $0\le i_j<p$ for all $j$, form a
local basis of $\cU ^{[p]} _{\cO_X}(L)$ as an $\cO_X$-module. In
particular, $\cU ^{[p]} _{\cO_X}(L)$ is locally free of rank
$p^{\rk L}$. In this case for  any sheaf $\cA$ of associative
algebras on $X$ and any homomorphism $\varphi : L\to \cA$ of
sheaves of Lie algebras on $X$, the map $\psi : L\to \cA $ is $F_X^*$-linear, i.e.,
$\psi (fx)=f^p \psi (x)$ for all $f\in \cO_X$ and $x\in L$
(this follows from the first condition in the definition of a sheaf of 
restricted Lie algebras). So  by adjunction $\psi$ induces an $\cO_X$-linear
map $F_X^*L\to \cA$ that by abuse of notation is also denoted by $\psi$.
Then the restricted universal enveloping algebra $\cU ^{[p]}_{\cO_X}(L)$
has the following universal property. For any sheaf $\cA$ of
associative $\cO_X$-algebras and any homomorphism $\varphi : L\to
\cA$ of sheaves of $\cO_X$-Lie algebras with $\psi : L\to \cA$
equal to zero, there exists a unique homomorphism $\ti \varphi :
\cU ^{[p]}_{\cO_X}(L)\to \cA$ of sheaves of associative
$\cO_X$-algebras such that $\varphi : L\to \cA$ is the composition
of the natural map  $L \to \cU ^{[p]}_{\cO_X}(L)$ with $\ti
\varphi$.

\subsection{Restricted Lie algebroids}

Note that the relative tangent sheaf $T_{X/S}$ has a natural
structure of a sheaf of restricted $\cO_S$-Lie algebras on $X$ in
which the $p$-th power operation on $\cO_S$-derivation $D:\cO_X\to
\cO_X$ is defined as the derivation acting on functions as the
$p$-th power differential operator $D^p$. In fact, $T_{X/S}$ with
the usual Lie bracket and this $p$-th power operation is a sheaf
of restricted $\cO_S$-Lie subalgebras of the associative algebra
$\cEnd _{\cO_S}\cO_X$ taken with the natural structure of a sheaf
of restricted $\cO_S$-Lie algebras on $X$. This motivates the
following definition:

\begin{Definition}
A \emph{restricted $\cO_S$-Lie algebroid} on $X$ is a quadruple
$(L, [\cdot, \cdot ], {\cdot}^{[p]}, \alpha )$ consisting of a
sheaf of restricted $\cO_S$-Lie algebras $(L, [\cdot, \cdot ],
{\cdot}^{[p]})$ on $X$ and  a homomorphism of sheaves of
restricted $\cO_S$-Lie algebras $\alpha: L\to T_{X/S}$ on $X$
satisfying the Leibniz rule and the following formula:
$$(fx)^{[p]}=f^px^{[p]}+\alpha ^{p-1}_{fx}(f) x$$
for all $f\in \cO_X$ and $x\in L$.
\end{Definition}

As in the non-restricted case we can define a \emph{trivial
restricted Lie algebroid} as a trivial Lie algebroid with the zero
$p$-th power operation. $T_{X/S}$ with the usual Lie bracket and
$p$-th power operation will be called the \emph{standard
restricted $\cO_S$-Lie algebroid on $X$}.

The last condition in the definition requires certain
compatibility of the $p$-th power operation on $L$ with the anchor
map and $\cO_X$-module structure of $L$. It can be explained by
the fact that, as expected, a restricted $\cO_S$-Lie algebroid on
$X$ with the zero anchor map is a sheaf of restricted $\cO_X$-Lie
algebras. In fact, the formula in the definition comes from the
following Hochschild's identity:

\begin{Lemma} \label{Hochschild}
\emph{(see \cite[Lemma 1]{Ho})}
Let $A$ be an associative $\FF_p$-algebra and $R\subset A$ a
commutative subalgebra. If for an element $x\in A$ we have $(\ad
x)(R)\subset R$ then for any element $r\in R$ we have
$$(rx)^p=r^px^p+(\ad (rx))^{p-1}(r) x.$$
\end{Lemma}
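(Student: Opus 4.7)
This is Hochschild's classical lemma \cite[Lemma 1]{Ho}; my plan is to reduce the identity to a universal polynomial identity in an Ore extension and then verify it combinatorially. The hypothesis $(\ad x)(R)\subset R$, together with the Leibniz rule for $\ad x$ on the commutative algebra $R$, makes $D := \ad(x)|_R$ into an $\FF_p$-linear derivation $R\to R$. Setting $\xi = rx$, a direct computation gives $\ad(\xi)(s) = rxs - srx = r(xs-sx) = rD(s)\in R$ for $s\in R$ (using $sr=rs$), so $\ad(\xi)$ also preserves $R$; in particular $(\ad\xi)^{p-1}(r)\in R$, and both sides of the claimed formula are well-defined elements of $A$.

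I would then pass to the Ore extension $B := R[\partial;D]$, the associative $\FF_p$-algebra generated by $R$ and a symbol $\partial$ subject to $\partial s - s\partial = D(s)$ for $s\in R$. By its universal property, the relation $[x,s]=D(s)$ in $A$ yields a canonical $\FF_p$-algebra homomorphism $B\to A$ sending $\partial\mapsto x$ and restricting to the identity on $R$; it sends $r\partial\mapsto rx$ and is compatible with $\ad$, so it suffices to prove the identity $(r\partial)^p = r^p\partial^p + (\ad(r\partial))^{p-1}(r)\,\partial$ in $B$. By a further universal specialization one may even assume $R=\FF_p[r,r_1,r_2,\dots]$ with $D(r)=r_1$ and $D(r_i)=r_{i+1}$, reducing the whole assertion to a single polynomial identity over $\FF_p$.

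In $B$ every element has a unique PBW normal form $\sum_{i\ge 0}c_i\partial^i$ with $c_i\in R$, and an easy induction using $\partial s = s\partial + D(s)$ shows $(r\partial)^k = \sum_{i=1}^k c_{k,i}\partial^i$ (with no constant term) and $c_{k,k}=r^k$. The crux --- and main obstacle --- is to show that for $k=p$ the ``middle'' coefficients $c_{p,i}$ with $2\le i\le p-1$ vanish. Working integrally over $\ZZ$, each $c_{p,i}$ is an explicit polynomial in $r, D(r), D^2(r),\dots$ whose integer coefficients turn out to be divisible by $p$, ultimately via the congruence $p\mid\binom{p}{j}$ for $1\le j\le p-1$; this is the combinatorial content of Hochschild's original computation. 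Once these middle coefficients are killed, direct comparison of the $\partial^1$-coefficients identifies $c_{p,1}$ with $(\ad r\partial)^{p-1}(r)\in R$, completing the proof.
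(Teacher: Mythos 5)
The paper gives no proof of this lemma at all --- it is quoted verbatim from Hochschild \cite[Lemma 1]{Ho} --- so there is no internal argument to compare yours with; I can only judge the proposal on its own terms. Your reductions are all sound: $D=\ad(x)|_R$ is a derivation of $R$, $\ad(rx)$ acts on $R$ as $rD$, the universal property of the Ore extension $B=R[\partial;D]$ legitimately transports the identity from $B$ to $A$, the further specialization to the universal differential polynomial algebra $\FF_p[r,r_1,r_2,\dots]$ is valid, and the bookkeeping $(r\partial)^k=\sum_{i=1}^k c_{k,i}\partial^i$ with $c_{k,0}=0$, $c_{k,k}=r^k$ and $c_{k,1}=(rD)^{k-1}(r)$ is indeed a direct induction (for the last one, use the representation $\partial\mapsto D$ of $B$ on $R$ or compare $\partial^1$-coefficients as you indicate).

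The problem is that the one nontrivial assertion of the lemma --- $c_{p,i}=0$ for $2\le i\le p-1$ --- is exactly the step you do not carry out. Saying the integer coefficients ``turn out to be divisible by $p$, ultimately via $p\mid\binom{p}{j}$'' and that ``this is the combinatorial content of Hochschild's original computation'' is an appeal to the cited result, not an argument; and the stated mechanism is not right as it stands: already for $p=3$ the offending coefficient is $3r^2D(r)$, where the $3$ arises as $\binom{2}{1}+1$ rather than as a single $\binom{p}{j}$, and in general the $c_{p,i}$ are Stirling-type expressions whose divisibility by $p$ needs a genuine proof. The gap can be closed inside your framework: set $v=(r\partial)^p-r^p\partial^p$. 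Jacobson's identity $\ad(u^p)=(\ad u)^p$ gives $[(r\partial)^p,s]=(rD)^p(s)\in R$, while $[\partial^p,s]=\sum_{j\ge1}\binom{p}{j}D^j(s)\partial^{p-j}=D^p(s)$ (this is where $p\mid\binom{p}{j}$ actually enters), so $[v,s]\in R$ for every $s\in R$. Now write $v=\sum_{i=0}^{p-1}c_i\partial^i$ and let $i_0$ be the largest index with $c_{i_0}\ne0$; since $[\partial^i,s]$ has leading term $iD(s)\partial^{i-1}$, the coefficient of $\partial^{i_0-1}$ in $[v,r]$ is $i_0c_{i_0}D(r)$, which is nonzero in the universal domain when $2\le i_0\le p-1$, contradicting $[v,r]\in R$. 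Hence $v=c_1\partial+c_0$, the constant terms give $c_0=0$, and your identification of $c_{p,1}$ with $(\ad(r\partial))^{p-1}(r)$ finishes the proof. Without some such argument the proposal establishes only the trivial part of the identity.
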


A similar formula can be found as \cite[Proposition 5.3]{Ka1}
(although with a sign error as pointed out by A. Ogus in
\cite{Og}).

\medskip

The following criterion allows us to check when a submodule of a
restricted Lie algebroid is a restricted Lie subalgebroid.
It generalizes well known Ekedahl's criterion allowing to check
when a submodule of the tangent bundle defines a $1$-foliation
(see \cite[Lemma 4.2]{Ek}).

\begin{Lemma}
\begin{enumerate}
\item
Let $L'$ be an $\cO_X$-submodule of an $\cO_S$-Lie algebroid $L$
on $X$. Then the Lie bracket on $L$ induces an $\cO_X$-linear
map
$${\bigwedge}^2L'\to L/L'$$
sending $x\wedge y$ to the class of $[x,y]$.
If this map is the zero map then $L'$ is an $\cO_S$-Lie subalgebroid of $L$.
\item
If $L'$ is an $\cO_S$-Lie subalgebroid of a restricted $\cO_S$-Lie algebroid $L$
then the $p$-th power map induces an $\cO_X$-linear morphism $F^*_{X}L'\to L/L'$.
If this map is the zero map then $L'$ is a restricted $\cO_S$-Lie subalgebroid
of $L$.
\end{enumerate}
\end{Lemma}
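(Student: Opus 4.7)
The plan for (1) is to define an $\cO_X$-bilinear alternating map $L' \otimes_{\cO_X} L' \to L/L'$ by $(x,y) \mapsto [x,y]_L \bmod L'$, and then observe that its vanishing is exactly the statement that $L'$ is closed under the bracket. Alternation is immediate from the axioms of a Lie bracket. For the $\cO_X$-linearity modulo $L'$, I would apply the Leibniz rule: $[fx,y]_L = f[x,y]_L - \alpha_y(f)\,x$; since $x \in L'$, the correction term $\alpha_y(f)\,x$ lies in $L'$, so reducing modulo $L'$ gives $f \cdot ([x,y]_L \bmod L')$. Alternation then transfers linearity from the first to the second argument, and the map factors through $\bigwedge^2 L'$. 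If this map is zero then $[x,y]_L \in L'$ for all local sections $x,y$ of $L'$; combined with the anchor $\alpha|_{L'} : L' \to T_{X/S}$ inherited from $L$, this is exactly the definition of an $\cO_S$-Lie subalgebroid.

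For (2), given that $L'$ is already a Lie subalgebroid, the plan is to prove that $x \mapsto x^{[p]} \bmod L'$ is a $p$-linear (i.e.\ $F_X^*$-semilinear) map $L' \to L/L'$; by adjunction such a map is the same as an $\cO_X$-linear morphism $F_X^* L' \to L/L'$, whose vanishing is exactly closure of $L'$ under the $p$-th power operation. Additivity modulo $L'$ follows from Jacobson's formula $(x+y)^{[p]} = x^{[p]} + y^{[p]} + \sum_{0<j<p} s_j(x,y)$, since each universal Lie polynomial $s_j(x,y)$ is a sum of iterated adjoint actions $(\ad x_{\sigma(1)})\cdots(\ad x_{\sigma(p-1)})(x_2)$ with $x_i \in L'$, and hence lies in $L'$ by part (1). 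The $F_X^*$-linearity with respect to the $\cO_X$-action follows directly from the defining formula of a restricted Lie algebroid, $(fx)^{[p]} = f^p x^{[p]} + \alpha_{fx}^{p-1}(f)\,x$, whose correction term again lies in $L'$ because $x \in L'$.

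The main step requiring a bit of care is the verification that the polynomials $s_j(x,y)$ take values in $L'$ when $x,y \in L'$; this genuinely uses the hypothesis of part~(1), namely that $L'$ is closed under the bracket, and is applied inductively to iterated brackets. Beyond that, the whole argument is a formal manipulation of the Leibniz identity and of Jacobson's two formulas, entirely parallel to Ekedahl's criterion for $1$-foliations recovered in the special case $L = T_{X/S}$.
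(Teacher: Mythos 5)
Your proposal is correct and follows essentially the same route as the paper: linearity of the induced bracket map via the Leibniz rule (the correction term landing in $L'$), additivity of the $p$-th power map via Jacobson's formula with $s_j(x,y)\in L'$ because the $s_j$ are Lie polynomials, and $F_X^*$-linearity via the defining identity $(fx)^{[p]}=f^px^{[p]}+\alpha_{fx}^{p-1}(f)\,x$. The only difference is that you spell out slightly more explicitly why the $s_j$ take values in $L'$, which the paper leaves as a one-line remark.
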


\begin{proof}
Let us take $f\in \cO_X$ and $x, y \in L'$. The first part follows
from the equality
$$[x, fy]=f[x,y]+\alpha_x(f)y\equiv f[x, y] \quad \mod \, L'.$$
To prove the second part note that
$$(x+y)^{[p]}=x^{[p]}+y^{[p]}+\sum _{0<j<p}s_j(x,y)\equiv x^{[p]}+y^{[p]} \quad \mod \, L' ,$$
since $s_j(x,y)\in L'$, as the $s_j$ are Lie polynomials.
Therefore $F^*_{X}L'\to L/L'$ is additive. Hence to prove that it
is $\cO_X$-linear it is sufficient to note that
$$(fx)^{[p]}=f^px^{[p]}+\alpha ^{p-1}_{fx}(f) x  \equiv f^px^{[p]} \quad \mod \, L'  .$$
\end{proof}

\medskip

Let us consider the following commutative diagram
$$ \xymatrix{ &{\VV (L)}^{(1/X)}\ar[rd]\ar[r]^{=}&
{\VV (F_{X/S}^*L')}\ar[d]^{\pi}\ar[r]^{{\ti F}_{X/S}} & \VV (L')\ar[r]\ar[d]^{\pi '}&\VV (L)\ar[d]\\
&&X\ar@(dr,dl)[rr]^{\mbox{\scriptsize $F_{X}$}}\ar[r]^{F_{X/S}}&X' \ar[r]&X\\
}$$ in which $L'$ is the pull back of  $L$ via $X'\to X$.

The following lemma is an analogue of \cite[Lemma 1.3.2]{BMR}:

\begin{Lemma} \label{BMR-lemma}
Let $L$ be a restricted $\cO_S$-Lie algebroid on $X$. Then the map
$\imath : L\to \Lambda_L$ sending $x\in L$
to $\imath(x):=x^p-x^{[p]}\in \Lambda_L $ is $F_X^*$-linear and its
image is contained in the center $Z(\Lambda_L)$ of $\Lambda_L$. In particular, if
$L$ smooth then $\imath$ extends to an $\cO_{X'}$-linear inclusion 
$S^{\bullet}L'\hookrightarrow F_{X/S,*}Z(\Lambda_L)$.
\end{Lemma}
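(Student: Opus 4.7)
The proof splits into four assertions: (i) $\imath$ is $F_X^*$-linear, (ii) $\imath(x)$ commutes with $\varphi(L)\subset \Lambda_L$, (iii) $\imath(x)$ commutes with $\cO_X\subset \Lambda_L$, and (iv) if $L$ is smooth, the extension to $S^{\bullet}L'$ exists and is injective. Since $\Lambda_L$ is generated as a sheaf of $\cO_S$-algebras by $\cO_X$ and the image of $L$, (ii)+(iii) give $\imath(L)\subset Z(\Lambda_L)$.

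The plan for (i) and (ii) is to apply Lemma \ref{p-curvature-for-Lie-algebras} to the canonical $\cO_S$-Lie algebra homomorphism $\varphi:L\to \Lambda_L$, with $\Lambda_L$ endowed with the tautological restricted $\cO_S$-Lie algebra structure $x^{[p]}:=x^p$. Then the map denoted $\psi$ in that lemma equals $\imath(x)=x^p-x^{[p]}$, so the lemma immediately gives additivity of $\imath$ and $[\imath(x),\varphi(y)]=0$ for all $y\in L$, which is (ii). For (i), I apply Hochschild's identity (Lemma \ref{Hochschild}) inside the associative $\FF_p$-algebra $\Lambda_L$ to the commutative subalgebra $R=\cO_X=\Lambda_{L,0}$ (using that $(\ad x)(\cO_X)=\alpha_x(\cO_X)\subset \cO_X$): this gives $(fx)^p=f^px^p+\alpha_{fx}^{p-1}(f)\,x$. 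Combined with the restricted-algebroid axiom $(fx)^{[p]}=f^px^{[p]}+\alpha_{fx}^{p-1}(f)\,x$, the two correction terms cancel after subtraction, yielding $\imath(fx)=f^p\imath(x)$.

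For (iii), I compute directly in $\Lambda_L$: for $f\in \cO_X$,
$$[\imath(x),f]=[x^p,f]-[x^{[p]},f]=(\ad x)^p(f)-\alpha_{x^{[p]}}(f)=\alpha_x^p(f)-\alpha_x^p(f)=0,$$
where the first equality uses Jacobson's formula $\ad(x^p)=(\ad x)^p$ in $\Lambda_L$, and the last uses that $\alpha:L\to T_{X/S}$ is a morphism of sheaves of restricted Lie algebras, so $\alpha(x^{[p]})=\alpha(x)^{[p]}=\alpha_x^p$ (the $p$-th iterate of the derivation).

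For (iv), the $F_X^*$-linearity of $\imath$ from (i) means $\imath$ factors through an $\cO_X$-linear map $F_X^*L\to Z(\Lambda_L)$. Using $F_X=\pi\circ F_{X/S}$ so that $F_X^*L=F_{X/S}^*L'$, the $(F_{X/S}^*,F_{X/S,*})$-adjunction produces an $\cO_{X'}$-linear map $\tilde\imath:L'\to F_{X/S,*}Z(\Lambda_L)$. Since $Z(\Lambda_L)$ is a commutative $\cO_X$-algebra, the universal property of $S^{\bullet}$ extends $\tilde\imath$ uniquely to an $\cO_{X'}$-algebra homomorphism $S^{\bullet}L'\to F_{X/S,*}Z(\Lambda_L)$. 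To verify injectivity, I pass to the associated graded: since $\imath(x)=x^p-x^{[p]}$ lies in $\Lambda_{L,p}$ with principal symbol $x^p\in \Gr_p\Lambda_L$, and PBW identifies $\Gr\Lambda_L\cong S^{\bullet}L$ (smoothness of $L$), the graded map $S^nL'\to F_{X/S,*}S^{np}L$ is the $n$-th symmetric power of the Frobenius map $L'\to F_{X/S,*}S^pL$, $x\mapsto x^p$. The latter is an $\cO_{X'}$-linear split injection because $L$ is locally free, so the graded map is injective, and standard filtered-to-graded reasoning then gives injectivity of $\imath$ itself.

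The main obstacle I expect is (i): one must recognize the Hochschild identity as producing precisely the correction term built into the restricted-algebroid axiom, so that the deviation from $F_X^*$-linearity is killed. The rest is bookkeeping combining Lemma \ref{p-curvature-for-Lie-algebras}, PBW, and Frobenius-twist adjunction.
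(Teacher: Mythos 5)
Your proof is correct and follows essentially the same route as the paper's: Hochschild's identity (Lemma \ref{Hochschild}) yields $F_X^*$-linearity by cancelling the correction term against the restricted-algebroid axiom, Lemma \ref{p-curvature-for-Lie-algebras} gives commutation with the image of $L$, and the Frobenius adjunction together with centrality of $\cO_{X'}$ and PBW give the extension to $S^{\bullet}L'$. Your explicit verifications that $\imath(x)$ commutes with $\cO_X$ and that the associated-graded map is injective are details the paper leaves implicit, and they are the right ones.
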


\begin{proof}
Lemma \ref{Hochschild} proves that the $p$-th power operation satisfies 
$\alpha _{x^{[p]}}=(\alpha _x)^p$ and $(fx)^{[p]}-f^px^{[p]}=(fx)^p-f^px^p$ 
in $\Lambda _L$ for all $f\in \cO_X$ and $x\in L$. Hence $\imath$ is $F_X^*$-linear.
Lemma \ref{p-curvature-for-Lie-algebras} implies that its image is contained in 
$Z(\Lambda_L)$.

For any $f\in \cO_X$ and $x\in L$ we have $xf^p-f^px=\alpha_x(f^p)=0$ in $\Lambda_L$, as 
$\alpha_x$ is an $\cO_S$-derivation. Therefore 
$\cO_{X'}\subset F_{X/S,*}Z(\Lambda_L)$ which together 
with the first part proves the required assertion. 
\end{proof}

\medskip

Note that the above lemma shows that $\Lambda _L$ contains a commutative subalgebra
$ S^{\bullet}(F_{X/S}^*L')$, so $\Lambda_L$
defines a quasi-coherent sheaf ${\ti\Lambda}_{L}$ on $\VV(F_X^*L)$.

Let $\Lambda _L^{[p]}$ be the quotient of $\Lambda_L$ by the
two-sided ideal generated by $\imath (x)$ for $x\in L$.
We call it the \emph{restricted universal enveloping algebra of
differential operators of $L$}.

\medskip

\begin{Lemma}\label{rank-lemma}
Let $L$ be smooth of rank $m$. Then ${\ti\Lambda}_{L}$ is a
locally free $\cO_{\VV (L)}$-module of rank $p^{m}$.
\end{Lemma}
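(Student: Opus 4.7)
The plan is to reduce to a local statement and then exhibit $\Lambda_L$ as a free left module of rank $p^m$ over its central subalgebra $S^{\bullet}(F_X^*L)\subset \Lambda_L$ obtained from Lemma \ref{BMR-lemma}. Since $\tilde{\Lambda}_L$ is by construction the quasi-coherent sheaf on $\VV(F_X^*L)$ associated to $\Lambda_L$ viewed as a module over this central subalgebra, local freeness of rank $p^m$ gives precisely the claim.

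Work locally where $L$ has an $\cO_X$-basis $x_1,\ldots,x_m$, and let $e_j = 1\otimes x_j$ be the resulting local basis of $F_X^*L$. By the Poincar\'e--Birkhoff--Witt theorem for Lie algebroids recalled above, $\Lambda_L$ is a free $\cO_X$-module with basis $\{x_1^{i_1}\cdots x_m^{i_m}\}_{i_k\ge 0}$ and the symbol map identifies $\gr \Lambda_L$ with $S^{\bullet} L$. The $\cO_X$-linear adjoint $\tilde{\imath}: F_X^*L\to\Lambda_L$ of $\imath$ sends $e_j$ to $x_j^p - x_j^{[p]} \in \Lambda_{L,p}$, whose principal symbol in $\gr_p\Lambda_L = S^pL$ is $x_j^p$. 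Let $R$ be the free $\cO_X$-module of rank $p^m$ with basis $\{x_1^{j_1}\cdots x_m^{j_m}\}_{0\le j_k<p}$, and consider the left $S^{\bullet}(F_X^*L)$-linear map
$$\phi : S^{\bullet}(F_X^*L)\otimes_{\cO_X} R \longrightarrow \Lambda_L$$
given by multiplication in $\Lambda_L$. I claim that $\phi$ is an isomorphism, which proves the lemma.

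To verify this, filter $R$ by total degree in the $x_k$, filter $S^{\bullet}(F_X^*L)$ by assigning degree $p$ to each $e_j$, and use the tensor-product filtration on the source; then $\phi$ becomes a filtered map into $\Lambda_L$ with its order filtration. On associated graded pieces, $\phi$ becomes the $\cO_X$-linear map $S^{\bullet}(F_X^*L)\otimes R \to S^{\bullet}L$ sending $(e_1^{i_1}\cdots e_m^{i_m})\otimes (x_1^{j_1}\cdots x_m^{j_m})$ to $x_1^{pi_1+j_1}\cdots x_m^{pi_m+j_m}$. The unique Euclidean decomposition $n = pi + j$ with $0\le j < p$ makes this a bijection on monomial bases in each degree, hence an isomorphism. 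Since both filtrations are exhaustive, bounded below, and have finitely generated pieces, induction on degree via the 5-lemma applied to the short exact sequences $0\to F_{n-1}\to F_n\to \gr_n\to 0$ lifts this graded isomorphism to $\phi$ itself. The main subtlety is ensuring that the filtration on $S^{\bullet}(F_X^*L)$ in which each $e_j$ has degree $p$ matches the order filtration on its image in $\Lambda_L$; once this is set up, the filtered-to-graded step is routine.
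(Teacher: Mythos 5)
Your proof is correct and follows essentially the same route as the paper's: both work locally with a basis $x_1,\dots,x_m$ of $L$, invoke the Poincar\'e--Birkhoff--Witt theorem, and exploit that $\imath(x_j)=x_j^p-x_j^{[p]}\equiv x_j^p \mod \Lambda_{L,p-1}$ together with Euclidean division of exponents by $p$. Your filtered/associated-graded verification that $\phi$ is an isomorphism spells out the linear-independence half of the freeness claim, which the paper's terser argument (phrased via generation of the quotient $\Lambda_L^{[p]}$) leaves largely implicit.
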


\begin{proof}
The canonical embedding $j: L\to \Lambda_L$ induces an embedding
$\ti j: L\to \Lambda_L^{[p]}$. Let us take an open subset
$U\subset X$ such that $L(U)$ is a free $\cO_X(U)$-module with
generators $x_1,..., x_m$. The kernel of $\Lambda _L(U)\to \Lambda
_L^{[p]}(U)$ is generated by elements $\imath (x_1),...,\imath
(x_m)$ which are in the center of $\Lambda _L (U)$. But $\imath
(x_i)\equiv x_i^p \, \mod \Lambda _{L, p-1}$, so by the
Poincare-Birkhoff-Witt theorem $\Lambda_L^{[p]}$ has local generators ${\ti
j}(x_1)^{i_1}...{\ti j}(x_m)^{i_m}$ for $0\le i_l<p$.  Hence
${j}(x_1)^{i_1}...{ j}(x_m)^{i_m}$ for $0\le i_l<p$ locally
generate $\Lambda_L$ as  an $S^{\bullet}(F_X^*L)$-module and
${\ti\Lambda}_{L}$ is locally free of rank $p^{m}$.
\end{proof}

\medskip

Lemma \ref{BMR-lemma} shows that if $L$ is smooth then $\imath$ induces 
an $\cO_{X'}$-linear map
$L'\to F_{X/S, *}\Lambda_L$ and a homomorphism of sheaves of
$\cO_{X'}$-algebras
$$S^{\bullet}(L')\to F_{X/S, *}(Z(\Lambda_L))\subset \Lambda_L':=F_{X/S, *}\Lambda_L .$$
In particular, it makes $\Lambda'_L$ into a quasi-coherent sheaf
of $S^{\bullet}(L')$-modules. This sheaf defines on $\VV(L')$ a
quasi-coherent sheaf of $\cO_{\VV (L')}$-algebras
${\ti\Lambda}_{L}'$. Note that by construction
$$\pi'_*{\ti\Lambda}_{L}'=F_{X/S, *}\Lambda_L =F_{X/S, *}\pi_*{\ti\Lambda}_{L}=\pi'_*
{\ti F}_{X/S, *}{\ti\Lambda}_L,$$ so we have
$${\ti\Lambda}_{L}'={\ti F}_{X/S, *}{\ti\Lambda}_L .$$
By an explicit computation as in Lemma \ref{rank-lemma} one can
prove the following theorem:

\begin{Theorem}
Assume that $X/S$ is smooth of relative dimension $d$ and $L$ is
smooth of rank $m$. Then ${\ti \Lambda}_L'$ is a locally free
$\cO_{\VV (L')}$-module of rank $p^{m+d}$.
\end{Theorem}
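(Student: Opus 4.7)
The plan is to deduce the theorem from Lemma \ref{rank-lemma} via the identity $\ti\Lambda_L' = \ti F_{X/S,*}\ti\Lambda_L$ established just before the statement, together with the base-change behaviour of the relative Frobenius. Since Lemma \ref{rank-lemma} already provides that $\ti\Lambda_L$ is locally free of rank $p^m$ on $\VV(F_{X/S}^*L')$, the task reduces to showing that pushforward along $\ti F_{X/S}\colon \VV(F_{X/S}^*L') \to \VV(L')$ multiplies the rank by exactly $p^d$.

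The first step is to recognise $\ti F_{X/S}$ as the base change of $F_{X/S}\colon X \to X'$ along $\pi'\colon \VV(L') \to X'$: the canonical isomorphism $\VV(F_{X/S}^*L') \cong \VV(L')\times_{X'} X$ identifies $\ti F_{X/S}$ with the projection onto $\VV(L')$, which is exactly how $\ti F_{X/S}$ was built in the diagram preceding Lemma \ref{BMR-lemma}. The second step is the standard fact that, because $X/S$ is smooth of relative dimension $d$, the relative Frobenius $F_{X/S}$ is finite and flat of degree $p^d$; locally in étale coordinates $t_1,\ldots,t_d$ on $X/S$, one has that $F_{X/S,*}\cO_X$ is $\cO_{X'}$-free on the basis $\{t_1^{a_1}\cdots t_d^{a_d} : 0\le a_j < p\}$. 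Base change preserves finiteness, flatness and degree, so $\ti F_{X/S}$ is also finite flat of degree $p^d$, and its pushforward therefore takes locally free sheaves of rank $r$ to locally free sheaves of rank $rp^d$. Applied to Lemma \ref{rank-lemma}, this yields the desired rank $p^m\cdot p^d = p^{m+d}$.

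An equivalent, more computational route (closer to the author's parenthetical remark) is to combine the $\cO_{X'}$-basis $\{t_1^{a_1}\cdots t_d^{a_d} : 0\le a_j<p\}$ of $F_{X/S,*}\cO_X$ with the $S^\bullet(F_{X/S}^*L')$-basis $\{x_1^{i_1}\cdots x_m^{i_m} : 0\le i_j < p\}$ of $\Lambda_L$ produced in the proof of Lemma \ref{rank-lemma}, and to verify that the set $\{t_1^{a_1}\cdots t_d^{a_d}\,x_1^{i_1}\cdots x_m^{i_m} : 0 \le a_j,i_j < p\}$ is a local $S^\bullet(L')$-basis of $\Lambda_L'=F_{X/S,*}\Lambda_L$ of cardinality $p^{m+d}$. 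In either approach the only delicate point is the identification of $\ti F_{X/S}$ with a Frobenius base change, which is however immediate from the diagram preceding Lemma \ref{BMR-lemma}, so no serious obstacle arises.
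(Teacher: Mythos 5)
Your proposal is correct. The paper offers no written proof beyond the remark that the theorem follows ``by an explicit computation as in Lemma \ref{rank-lemma}''; your second, computational route --- tensoring the local $S^{\bullet}(F_{X/S}^*L')$-basis $\{x_1^{i_1}\cdots x_m^{i_m}\}$ of $\Lambda_L$ with the local $\cO_{X'}$-basis $\{t_1^{a_1}\cdots t_d^{a_d}\}$ of $F_{X/S,*}\cO_X$ --- is exactly that computation, so it matches the intended argument. Your first route is a genuinely different packaging: it isolates the identity ${\ti\Lambda}_L'={\ti F}_{X/S,*}{\ti\Lambda}_L$ established in the text, identifies ${\ti F}_{X/S}$ as the base change of $F_{X/S}$ along $\pi'$ (valid because $\VV(F_{X/S}^*L')=\Spec S^{\bullet}(F_{X/S}^*L')=\VV(L')\times_{X'}X$), and invokes the fact that pushforward along a finite flat morphism of degree $p^d$ carries locally free sheaves of rank $r$ to locally free sheaves of rank $rp^d$. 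This buys a coordinate-free argument whose only inputs beyond Lemma \ref{rank-lemma} are the finite flatness of degree $p^d$ of the relative Frobenius of a smooth morphism of relative dimension $d$ and the small verification that finite flat pushforward preserves local freeness (the stalks $(f_*E)_z=\bigoplus_{y\mapsto z}E_y$ are finite flat, hence free, $\cO_{Z,z}$-modules, of total rank $rp^d$ by computing fibre dimensions). Both arguments are sound, and the only notational point to keep in mind is that Lemma \ref{rank-lemma} should be read, as you do, as asserting local freeness of ${\ti\Lambda}_L$ over $\cO_{\VV(F_{X/S}^*L')}$.
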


\medskip

By \cite{BMR} in the special case when $L=T_{X/S}$ is the standard
$\cO_S$-Lie algebroid on $X$, the sheaf ${\ti \Lambda}_L'$ is a
sheaf of Azumaya $\cO_{\VV (L')}$-algebras. In this case we have a
canonical splitting
$${\ti F}_{X/S}^*{\ti\Lambda}_L'\simeq \cEnd _{\cO_{\VV (F^*_{X/S}L')}}
{\ti\Lambda}_{L}.$$

\subsection{Relation with groupoid schemes}

This subsection contains a quick tour on relation between Lie
algebroids and groupoid schemes of height $\le 1$. This is
analogous to the well-known relation between  restricted Lie
algebras and group schemes of height $\le 1.$

\medskip

Let us recall that a \emph{groupoid} is a small category in which every morphism is
an isomorphism. Let $X$ and $R$ be $S$-schemes.
An \emph{$S$-groupoid scheme} $G$ is a quintuple of $S$-maps $s,
t: R\to X$ (``source and target objects''),  $c:R\times
_{(s,t)}R\to R$ (``composition''), $e:X\to R$ (``identity map'')
and $i: R\to R$ (``inverse map'') such that for every $S$-scheme
$T$ the quintuple $s(T)$, $t(T)$, $c(T)$, $e(T)$ and $i(T)$
defines in a functorial way a groupoid  with morphisms $R(T)$ and
objects $X(T)$.

For an $S$-groupoid scheme $G$ we denote by $\cJ$ the kernel of
$s_*\cO_R\to \cO_X$. We say that $G$ is \emph{infinitesimal} if
$s$ is an affine homeomorphism and $\cJ$ is a nilpotent ideal. An
infinitesimal $S$-groupoid scheme is of \emph{height $\le 1$} if
$(s,t):R\to X\times _SX$ factors through the first Frobenius
neighbourhood of the diagonal (i.e., through $X\times
_{X^{(1/S)}}X$). An $S$-groupoid scheme is called \emph{finite}
(\emph{flat}) if $s$ is finite (respectively, flat).

If $X$ is smooth over a perfect field $k$ then restricted $k$-Lie
subalgebras $L$ of the standard $k$-Lie algebroid $T_{X/k}$ such
that $T_{X/k}/L$ is locally free are in bijection with finite flat
height $1$ morphisms $X\to Y$ (see \cite[Proposition 2.4]{Ek}).
Note that a sheaf of restricted $k$-Lie subalgebras of $T_{X/k}$
is automatically a restricted $k$-Lie subalgebroid of $T_{X/k}$.
So the following proposition generalizes the above fact (and it
corrects \cite[Proposition 2.3]{Ek}):

\begin{Proposition}
Let $X/S$ be a smooth morphism. Assume that for every point $x\in
X $ the set $t(s^{-1}(x))$ is contained in an affine open subset
of $X$. Then there exists an equivalence of categories between the
category of finite flat $S$-groupoid schemes of height $\le 1$
with $X/S$ as a scheme of objects and with locally free ``conormal
sheaf'' $\cJ/\cJ ^2$ and the category of smooth restricted
$\cO_S$-Lie algebroids on $X/S$.
\end{Proposition}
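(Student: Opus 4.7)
The plan is to construct mutually quasi-inverse functors between the two categories.

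\emph{Forward functor (groupoids to Lie algebroids).} Given a finite flat height $\le 1$ groupoid scheme $G = (s,t,c,e,i\colon R \rightrightarrows X)$, let $\cJ \subset s_*\cO_R$ be the kernel of the augmentation $e^*$ and set $L := \cHom_{\cO_X}(\cJ/\cJ^2,\cO_X)$. The local freeness assumption on $\cJ/\cJ^2$ makes $L$ smooth. Sections of $L$ correspond to $\cO_X$-linear derivations $s_*\cO_R \to \cO_X$ centered at $e^*$; via the composition law they extend uniquely to left-invariant $\cO_S$-derivations $D_x\colon s_*\cO_R \to s_*\cO_R$. The commutator and $p$-th iterate of such derivations remain left-invariant (the latter because the $p$-th iterate of a derivation is again a derivation in characteristic $p$), giving the Lie bracket and the $p$-th power operation on $L$. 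The anchor $\alpha_x(f) := D_x(t^*f)$ takes values in $T_{X/S}$, and the Leibniz rule together with the formula $(fx)^{[p]} = f^p x^{[p]} + \alpha_{fx}^{p-1}(f)\,x$ are instances of Hochschild's identity (Lemma \ref{Hochschild}) applied inside the associative algebra of $\cO_S$-linear endomorphisms of $s_*\cO_R$.

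\emph{Backward functor (Lie algebroids to groupoids).} Given a smooth restricted Lie algebroid $L$, set $\cA := \Lambda_L^{[p]}$; by Lemma \ref{rank-lemma} it is locally free of rank $p^{\rk L}$ over $\cO_X$. Equip $\cA$ with an $\cO_X$-Hopf algebroid structure: the counit is the augmentation sending $L$ to $0$, the coproduct $\Delta\colon \cA \to \cA \otimes_{\cO_X} \cA$ is the unique algebra map with $\Delta(x) = x \otimes 1 + 1 \otimes x$ for $x \in L$ (well-defined because the $s_j$ are Lie polynomials, so the defining relations $x^p - x^{[p]} = 0$ are preserved by $\Delta$), and the antipode sends $x$ to $-x$. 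Dualizing, $\mathcal{B} := \cHom_{\cO_X}(\cA,\cO_X)$ inherits a commutative $\cO_X$-algebra structure, and the relative spectrum $R := \Spec \mathcal{B}$ over $X$ with source map $s\colon R \to X$ is finite flat of rank $p^{\rk L}$. The right $\cO_X$-structure on $\cA$ induces a second $\cO_X$-algebra structure on $\mathcal{B}$, and the hypothesis that $t(s^{-1}(x))$ lies in an affine open of $X$ ensures that this second structure is realized by an honest scheme morphism $t\colon R \to X$. Composition, identity, and inverse arise by dualizing $\Delta$, the counit, and the antipode, and the height $\le 1$ condition follows from the relation $x^p = x^{[p]}$ in $\Lambda_L^{[p]}$.

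\emph{Quasi-inverseness.} Applying the two functors in succession to $L$, the augmentation ideal of the output algebra $\mathcal{B}$ satisfies $\cJ/\cJ^2 \cong L^*$ canonically, since $\cJ/\cJ^2$ is dual to the primitive part of $\cA$ which equals $L$; a verification on generators shows that the recovered bracket, $[p]$-operation, and anchor agree with those of $L$. Conversely, starting from a groupoid $G$, the pairing between $L = (\cJ/\cJ^2)^*$ and $s_*\cO_R$ extends multiplicatively to a pairing $\Lambda_L^{[p]} \times s_*\cO_R \to \cO_X$. Fiberwise this reduces to the classical identification of the distribution algebra of a finite flat height $\le 1$ group scheme with the restricted enveloping algebra of its Lie algebra, hence it is perfect at each point; since both sides are locally free of rank $p^{\rk L}$, Nakayama's lemma promotes this to a global isomorphism $s_*\cO_R \cong \cHom_{\cO_X}(\Lambda_L^{[p]},\cO_X)$.

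The main obstacle is the globalization in the backward direction. The algebra $\Lambda_L^{[p]}$ carries two distinct $\cO_X$-module structures (via left and right multiplication, differing in positive degrees because the anchor is generally non-zero), and the coproduct $\Delta$ lands in a tensor product taken with respect to one of them. Transferring this coalgebra structure to the dual and realizing the resulting abstract groupoid structure as an actual scheme requires that the second $\cO_X$-algebra structure on $\mathcal{B}$ correspond to a morphism $t\colon R \to X$ and that $R \times_{(s,t)} R$ be a reasonable scheme on which the composition lands; the hypothesis that $t(s^{-1}(x))$ is contained in an affine open of $X$ for every $x \in X$ is tailored precisely to this need.
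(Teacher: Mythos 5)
Your construction coincides with the paper's own (much briefer) sketch of this proposition: the forward functor takes $L$ to be the dual of the conormal sheaf $\cJ/\cJ^2$ with the restricted structure coming from invariant derivations and the anchor from the height $\le 1$ condition, and the backward functor defines the scheme of morphisms $R$ as the relative spectrum of the dual of $\Lambda_L^{[p]}$. You in fact supply more detail than the paper, whose proof is explicitly a sketch ``leaving details to the reader'' and which only adds the remark that $\Lambda_L^{[p]}$ maps canonically to $\Lambda_{T_{X/S}}^{[p]}$ inside the sheaf of true differential operators.
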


\begin{proof}
We sketch the proof leaving details to the reader.

If $G$ is a finite, flat, infinitesimal $S$-groupoid scheme then
we define $L$ as the Lie algebra of this groupoid, i.e., the dual
of  $\cJ/\cJ ^2$. It has a natural structure of a sheaf of
restricted $\cO_S$-Lie algebras. Since $G$ has height $\le 1$, $L$
is equipped with the anchor map.

In the other direction, to a smooth restricted $\cO_S$-Lie
algebroid $L$ on $X/S$ we associate  $\Lambda_L^{[p]}$ which comes
with a canonical homomorphism of $\cO_S$-algebras
$\Lambda_L^{[p]}\to \Lambda_{T_{X/S}}^{[p]}$. But
$\Lambda_{T_{X/S}}^{[p]}$ is an $\cO_S$-subalgebra of the sheaf of
rings of ``true'' differential operators and the ``morphisms'' $R$
of the groupoid scheme can be defined as the spectrum of the dual
of $\Lambda_L^{[p]}$.
\end{proof}

\subsection{Modules over restricted Lie algebroids}

If $E$ is a module over a restricted $\cO_S$-Lie algebroid $L$
then $\nabla :L\to \cEnd_{\cO_S} E$ leads to a morphism
$$\psi: L\to \cEnd _{\cO_S}E$$
defined by sending $x$ to $(\nabla (x))^p-\nabla (x^{[p]})$ for
$x\in L$.

Let us set $\alpha _x^0(f)=f$ and $(\nabla(x))^0(e)=e$.
Using Leibniz' rule one can easily see that
$$(\nabla(x))^m(fe)=\sum _{i=0}^m{m \choose i} \alpha_x^i(f)(\nabla (x))^{m-i}(e)$$
for any sections $f\in \cO _X(U)$, $x\in L (U)$ and $e\in E(U)$
and any open subset $U\subset X$.
In particular, we have
$$(\nabla(x))^p(fe)=\alpha_x^p(f)e + f(\nabla(x))^p(e).$$
Since
$$\nabla(x^{[p]}) (fe)=\alpha_{x^{[p]}}(f)e + f\nabla(x^{[p]})(e)$$
and $\alpha_{x^{[p]}}=\alpha_x^p$ we see that for any $x\in L$ the
image $\psi (x)$ is $\cO_X$-linear. So we can consider $\psi$ as
the mapping $\psi: L\to \cEnd _{\cO_X}E$. This mapping is called
the \emph{$p$-curvature morphism} of the $L$-module $E$. The
following lemma generalizes \cite[Proposition 5.2]{Ka1}:

\begin{Lemma} \label{p-cur}
The $p$-curvature morphism $\psi : L\to \cEnd _{\cO_X}E$ is
$F_X^*$-linear and its image commutes with the image of
$\nabla$ in $\cEnd _{\cO_S}E$.
\end{Lemma}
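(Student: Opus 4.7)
The plan is to deduce the lemma directly from the two earlier ingredients the paper has set up: Lemma \ref{p-curvature-for-Lie-algebras} (which handles additivity and commutation for any Lie-algebra homomorphism into an associative algebra) and the Hochschild-type identity of Lemma \ref{Hochschild} (which will handle the $\cO_X$-scaling).

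First I would apply Lemma \ref{p-curvature-for-Lie-algebras} with $\cA=\cEnd_{\cO_S}E$ and $\varphi=\nabla$. Since $\nabla$ is a homomorphism of sheaves of $\cO_S$-Lie algebras on $X$, that lemma gives for free that $\psi$ is additive and that $[\psi(x),\nabla(y)]=0$ for all local sections $x,y\in L$, which is exactly the commutation assertion of the present lemma. So what is left is the scaling identity $\psi(fx)=f^p\psi(x)$ for $f\in\cO_X$, $x\in L$.

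For this I would compute $(\nabla(fx))^p$ and $\nabla((fx)^{[p]})$ separately. Using the $\cO_X$-linearity of $\nabla$, the operator $\nabla(fx)$ equals $f\cdot\nabla(x)$ inside $\cEnd_{\cO_S}E$, and the Leibniz rule shows that $\ad(\nabla(x))$ sends $\cO_X\subset\cEnd_{\cO_S}E$ into $\cO_X$ (it acts as the derivation $\alpha_x$). So Hochschild's identity (Lemma \ref{Hochschild}) applies with $A=\cEnd_{\cO_S}E$, $R=\cO_X$ and yields
\[
(\nabla(fx))^p=(f\,\nabla(x))^p=f^p(\nabla(x))^p+(\ad\nabla(fx))^{p-1}(f)\cdot\nabla(x).
\]
Since $\ad\nabla(fx)$ acts on $\cO_X$ as $\alpha_{fx}$, the coefficient $(\ad\nabla(fx))^{p-1}(f)$ equals $\alpha_{fx}^{p-1}(f)$. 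On the other hand, the defining relation of a restricted Lie algebroid combined with $\cO_X$-linearity of $\nabla$ gives
\[
\nabla((fx)^{[p]})=\nabla\!\left(f^px^{[p]}+\alpha_{fx}^{p-1}(f)\,x\right)=f^p\nabla(x^{[p]})+\alpha_{fx}^{p-1}(f)\,\nabla(x).
\]

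Subtracting the two expressions the middle terms cancel and one is left with $\psi(fx)=f^p\bigl((\nabla(x))^p-\nabla(x^{[p]})\bigr)=f^p\psi(x)$, which combined with additivity gives $F_X^*$-linearity. The only mildly delicate point is checking that Hochschild's identity is legitimately applicable here, i.e.\ verifying that $\cO_X$ sits inside $\cEnd_{\cO_S}E$ as a commutative subalgebra stable under $\ad\nabla(x)$; once that is in place the rest is a one-line cancellation, so I do not anticipate any real obstacle.
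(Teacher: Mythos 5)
Your proposal is correct and follows essentially the same route as the paper's own proof: Lemma \ref{p-curvature-for-Lie-algebras} applied to $\varphi=\nabla$ for additivity and commutation, then Hochschild's identity in $\cEnd_{\cO_S}E$ combined with the restricted Lie algebroid axiom $(fx)^{[p]}=f^px^{[p]}+\alpha_{fx}^{p-1}(f)x$ to get $\psi(fx)=f^p\psi(x)$ by subtraction. The verification you flag at the end (that $\cO_X\subset\cEnd_{\cO_S}E$ is a commutative subalgebra stable under $\ad\nabla(x)$, acting via $\alpha_x$) is exactly the hypothesis of Lemma \ref{Hochschild} and is immediate from the Leibniz rule, so there is no gap.
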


\begin{proof}
By Lemma \ref{p-curvature-for-Lie-algebras} we know that $\psi$ is
additive and its image commutes with the image of $\nabla$.  So it
is sufficient to check that
$$\psi (fx)=f^p\psi (x)$$
for all local sections $f\in \cO_X$ and $x\in L$. Applying
Hochschild's identity to elements $f$ and $\nabla (x)$ in $\cEnd
_{\cO_S}E$ we obtain
$$(\nabla (fx))^{p}=f^p \nabla(x)^{p}+(\ad (f\nabla(x)))^{p-1}(f)\nabla (x)
=f^p \nabla(x)^{p}+\alpha ^{p-1}_{fx}(f) \nabla (x).$$ From the
definition of a restricted $\cO_S$-Lie algebroid and
$\cO_X$-linearity of $\nabla: L\to \cEnd _{\cO_S} E$ we have
$$\nabla ((fx)^{[p]})=f^p \nabla(x^{[p]})+\alpha ^{p-1}_{fx}(f) \nabla (x).$$
Subtracting these equalities we get the required identity.
\end{proof}

\medskip
By the above lemma $\psi$ defines an $\cO_X$-linear map $L\to
F_{X, *}\cEnd _{\cO_X}E$ and hence the adjoint $\cO_X$-linear map
$$\psi_L: F_{X}^*L\to \cEnd _{\cO_X}E,$$
which will also be called the $p$-curvature morphism. Note that
$\psi _L$ makes $E$ into an $F_X^* L$-coHiggs sheaf (integrability
of the $F_X^* L$-coHiggs field follows from the lemma). Another
way of seeing it is that if $E$ is a $\Lambda_L$-module then by
Lemma \ref{BMR-lemma} it has a structure of
$S^{\bullet}(F_X^*L)$-module given by the $p$-curvature $\psi_L$.

\medskip

\begin{Example} \label{example:trivial-Lie-algebroid}
Let $L$ be a smooth trivial restricted $\cO_S$-Lie algebroid on
$X$. Then giving an $L$-module is equivalent to giving
$S^{\bullet}L$-module structure on $E$. In this case the
$p$-curvature morphism $\psi _L: F_X^* L\to \cEnd _{\cO_X}E$ is
obtained by composing the canonical inclusion $F_X^* L\to S^{p}L$
with the action map $S^p L\to \cEnd _{\cO_X}E$.
\end{Example}

\medskip

\begin{Example} \label{example:lambda-connection}
Let $X$ be a smooth $S$-scheme and let us fix $\lambda \in
H^0(\cO_S)$. Let us denote by  $T_{X/S}^{\lambda}$ the restricted
$\cO_S$-Lie algebroid structure on $T_{X/S}$ with Lie bracket
$[\cdot, \cdot ]_{T_{X/S}^{\lambda}}=\lambda \cdot [\cdot, \cdot
]_{T_{X/S}}$, anchor map $\alpha$ given by multiplication by
$\lambda$ and the $p$-th power operation given by
$$x^{[p]}_{T_{X/S}^{\lambda}}=\lambda^{p-1}\cdot
 x^{[p]}_{T_{X/S}} $$
for $x\in T_{X/S}$. The apparently strange formula for the $p$-th
power operation comes from the requirement $$\alpha
(x^{[p]}_{T_{X/S}^{\lambda}})=\lambda \cdot
x^{[p]}_{T_{X/S}^{\lambda}}= \left( \alpha (x)\right)^{[p]}
=\lambda^p\cdot x^{[p]}.$$

Giving a $T_{X/S}^{\lambda}$-module is equivalent to giving a
coherent $\cO_X$-module $E$ with an integrable
$\lambda$-connection $\nabla : E\to E\otimes
_{\cO_X}\Omega_{X/S}$. In this case the above defined
$p$-curvature of the $T_{X/S}^{\lambda}$-module gives a more
conceptual approach to the $p$-curvature of an $\cO_X$-module with
$\lambda$-connection $(E,\nabla)$ defined in \cite[Definition
3.1]{LP}.
\end{Example}

\medskip

\begin{Remark}
If $\nabla_1$ and $\nabla_2$ are two $L$-module structures on $E$
then $\varphi=\nabla_1-\nabla_2: L\to \cEnd _{\cO_S}E$ is
$\cO_X$-linear and its image lies in $\cEnd _{\cO_X}E$. In
particular, if the $p$-curvatures $\psi _L(\nabla _1)$ and $\psi
_L (\nabla_2)$ are equal then $\varphi$ is zero on the kernel of
$\Lambda_L\to \Lambda_L^{[p]}$ and hence it induces the
homomorphism $\Lambda_L^{[p]}\to \cEnd_{\cO_X}E$ of
$\cO_X$-algebras.
\end{Remark}

\medskip

\begin{Definition}
We say that the $p$-curvature of $(E, \nabla)$ is \emph{nilpotent
of level less than $l$} if  $(E, \nabla)$ satisfies one of the
following equivalent conditions:
\begin{enumerate}
\item There exists a filtration $M^m=0\subset M^{m-1}\subset ...\subset M^0=(E, \nabla)$ of length
$m\le l$ such that the associated graded $L$-module has
$p$-curvature $0$.
\item For any open subset $U\subset X$ and any collection $\{ x_1,..., x_l\}$ of
sections of $L(U)$ we have $\psi _L(x_1)...\psi_L(x_l)=0$.
\end{enumerate}
We say that he $p$-curvature of $(E, \nabla)$ is \emph{nilpotent
of level $l$} if it is nilpotent of level less than $(l+1)$ but
not nilpotent of level less than $l$ (for $l=0$ we require simply
that the $p$-curvature is nilpotent of level less than $1$).
\end{Definition}

\subsection{Deformation of Hitchin's morphism for restricted Lie
algebroids}

This subsection contains a partial generalization of the results
of Laszlo and Pauly \cite{LP} to higher dimensions. Note that in
general, the direct analogue of their \cite[Proposition 3.2]{LP}
is not expected to be true.

\medskip

Let $S$ be a noetherian scheme of characteristic $p$ and let $X\to
S$ be a flat, projective family of $d$-dimensional varieties
satisfying Serre's condition $(S_2)$. Let $L$ be a smooth
restricted $\cO_S$-Lie algebroid on $X$. Let us fix a polynomial
$P$ and a relatively ample line bundle on $X/S$. We define the
moduli stack as a lax functor between $2$-categories by
$$\begin{array}{cccc}
\cM^L(X/S, P): &(\Sch /S)&\to& (\hbox{groupoids})\\
&T&\to&\cM(T),
\end{array}$$
where $\cM(T)$ is the category whose objects are $T$-flat families
of pure $d$-dimensional $L$-modules with Hilbert polynomial $P$ on
the fibres of $X_T\to T$, and whose morphisms are isomorphisms of
coherent sheaves. One can prove that $\cM^L(X/S, P)$ is an
algebraic stack for the fppf topology on $(\Sch /S)$. If $M$ is a
coherent $\cO_X$-module considered as an $\cO_S$-Lie algebroid on
$X$ with the trivial structure, then the corresponding moduli
stack is denoted by $\cM^M_{\rm Dol}(X/S, P)$.

The $p$-curvature defines a morphism of stacks
$$\begin{array}{cccc}
\Psi_L: & \cM^L(X/S, P)&\to&  \cM^{F_X^*L}_{\rm Dol}(X/S, P)\\
&(E, \nabla)&\to&(E,\psi (\nabla)).
\end{array}
$$

Let us consider the deformation $L^R$ of $L$ over an affine line
$\AA^1$ over ${\FF_p}$ (see Subsection
\ref{deformation-over-line}). For simplicity of notation, in the
following  we skip writing ${\FF_p}$. $L^R$ has a natural
structure of a smooth restricted $\cO_{S\times \AA^1}$-Lie
algebroid on $X\times \AA^1$ with the $p$-th power operation given
by $\cdot^{[p]}_{L^R}=p_1^*\left(\cdot^{[p]}_{L}\right)\otimes
t^{p-1}$. We can treat $L^R$ as a family of restricted $\cO_S$-Lie
algebroids on $X$ parameterized by $\AA ^1$. For example, if $X/S$
is smooth and we fix $\lambda \in H^0(\cO_S)=\Hom (S, \AA^1)$ then
for $L=T_{X/S}$ with the standard restricted $\cO_S$-Lie algebroid
structure, the pull-back of $L^R$ along $(\id _S, \lambda ) : S\to
S\times \AA ^1$ gives $T_{X/S}^{\lambda}$ from Example
\ref{example:lambda-connection}.

We have a commutative diagram
$$ \footnotesize{\xymatrix{ \cM^{L^R}(X\times \AA^1 /S\times \AA^1 , P)
\ar[r]^{\Psi_{L^R}} & \cM ^{F_{X\times \AA^1}^*L^R}_{\rm
Dol}(X\times \AA^1 /S\times \AA^1, P )\ar[r]^{H_{F_{X\times
\AA^1}^*L^R}}& \VV ^{
F_{X\times \AA^1}^*L^R}(X\times \AA^1 /S\times \AA^1 ,r)\\
\cM ^L_{\rm Dol}(X/S, P)\ar[r]^{H_L}\ar[u] & \VV ^{L}(X/S,
r)\ar[r]& \VV^{ F_X^*L}(X/S,r),\ar[u]\\
}}$$ where the vertical arrows are induced by the base change via
the zero section $0:S\to S\times \AA^1$ and $ \VV ^{L}(X/S,r) \to
\VV ^{ F_X^*L}(X/S,r)$ is the canonical morphism induced by the
absolute Frobenius on $X$. Roughly speaking, this diagram says
that the $p$-curvature morphism $\Psi_L$ deforms to the $p$-th
power of the Hitchin morphism.

\medskip

Let $\Nilp ^L(X/S,P)$ be the substack of $ \cM ^L(X/S, P)$ of
$L$-modules with nilpotent $p$-curvature. By definition $\ti \Psi$
maps  $\Nilp ^L(X/S,P)$ into $\{0\}\times \AA^1=\AA^1$ and the
corresponding map will be still denoted by $\ti \Psi$. The stacks
$ \cM^L(X/S, P)$ and $\Nilp ^L(X/S,P)$ contain open substacks $
\cM^{L,ss}(X/S, P)$ and $\Nilp ^{L,ss}(X/S,P)$ parametrizing slope
semi-stable objects (openness of semistability is a standard
exercise left to the reader). By boundedness theorem (see
\cite{La2}) these substacks are of finite type. Theorem
\ref{slope-Langton} implies that the morphisms $ {\ti
\Psi}^{ss}:\cM^{L^R,ss}(X/S, P)\to  \VV ^{ F_X^*L}(X/S,r) \times
\AA^1$ and $ \Nilp^{L,ss}(X/S, P)\to \AA^1$ are universally
closed.

\medskip

Let $\Nilp _{l}^{L,ss}(X/S,P)$ be the substack of  $\Nilp
^{L,ss}(X/S, P)$ parametrizing objects with nilpotent
$p$-curvature of level $<l$. Note that it is a closed substack,
since nilpotence of level $<l$ is a closed condition. Therefore $
\Nilp_{l}^{L,ss}(X/S, P)\to \AA^1$ is universally closed (see
\cite[Proposition 5.1]{LP} for a special case of this assertion).

Let us note that the fiber of $ \Nilp_{1}^{L,ss}(X/S, P)\to \AA^1$
over $0$ is equal to the moduli stack of semistable $L$-coHiggs
sheaves $(E, \theta)$ with vanishing $p$-curvature (see Example
\ref{example:trivial-Lie-algebroid}). In particular, \cite[Remark
5.1]{LP} is false.

On smooth projective curves of genus $g\ge 2$ the proof of
\cite[Lemma 5.1]{LP} shows that a vector bundle with a
$\lambda$-connection of level less than $l$ can be extended to a
Higgs bundle with the Higgs field $\theta$ satisfying $\theta
^l=0$. In particular, for $l=1$ we get the zero Higgs field.

So one could hope that in this case, e.g., if $
{\widetilde{\Nilp}}_{1}^{L,ss}(X/S, P)\to \AA^1$ is the open
substack of $ \Nilp_{1}^{L,ss}(X/S, P)\to \AA^1$, which over $0$
is the moduli substack of semistable sheaves then
${\widetilde{\Nilp}}_{1}^{L,ss}(X/S, P)\to \AA^1$ is also
universally closed as suggested by \cite[Remark 5.1]{LP}. However,
this expectation is false. In case of a smooth projective  curve
$X$ of genus $g\ge 2$ there exists a semistable bundle $E$ whose
Frobenius pull back $F_X^*E$ is not semistable. But $F_X^*E$
carries a canonical connection $\nabla _{\can}$ and $(F_X^*E,
\nabla _{\can})$ is semistable. After pulling back via $X_K\to X$,
where $K=k((t))$, and twisting by $t$, this provides a semistable
vector bundle with a $t$-connection on $X_{K}$ which cannot be
extended to a semistable family on $X_{k[[t]]}$ so that the Higgs
field at the special fibre vanishes. Otherwise, we would get a
contradiction with openness of the usual semistability of vector
bundles.

\section{Deformations of semistable sheaves and
the Lan-Sheng-Zuo conjecture}

\subsection{Langton's theorems}

Let $R$ be a discrete valuation ring with maximal ideal $m$
generated by $\pi \in R$. Let $K$ be the quotient field of $R$ and
let us assume that the residue field $k=R/m$ is algebraically
closed.

Let $X\to S=\Spec R$ be a smooth projective morphism and let $L$
be a smooth $\cO_S$-Lie algebroid on $X$. Let us fix a collection
$(D_0, D_1,\dots ,D_{n-1})$ of $n$ relatively nef divisors on
$X/S$ such that $D_0=D_1$. In the following stability of sheaves
on the fibers of $X\to S$ is considered with respect to this fixed
collection.

The following theorem generalizes well known Langton's theorem
\cite[Theorem 2)]{Lt}. We recall the proof  as it is not available
in the generality that we need. The notation introduced in this
proof will be also used in proof of Theorem
\ref{existence-of-gr-ss-Griffiths-transverse-filtration}.

\begin{Theorem}\label{slope-Langton}
Let $F$ be an $R$-flat $\cO_X$-coherent $L$-module of relative
pure dimension $n$ such that the $L_K$-module $F_K=F\otimes _RK$
is slope semistable. Then there exists an $L$-submodule $E\subset
F$ such that $E_K=F_K$ and $E_k$ is a slope semistable
$L_k$-module on $X_k$.
\end{Theorem}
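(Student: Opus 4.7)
The plan is to adapt Langton's classical proof to the $L$-module setting; the only essentially new ingredient is that the maximal slope-destabilizing subsheaf of a coherent $L$-module on $X_k$ is automatically preserved by the $L$-action. Once this is in hand, the construction of the descending chain and the termination argument transfer from \cite{Lt}, using the boundedness of slope semistable $L$-modules available from \cite{La2}.

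Assume for contradiction that no $L$-submodule $E\subset F$ with $E_K=F_K$ has $E_k$ slope semistable. Starting from $F^0:=F$, I construct inductively an infinite strictly descending chain of $L$-submodules $F^0\supset F^1\supset F^2\supset\dots$ with $F^i_K=F_K$ by the following elementary modification: given $F^i$ whose closed fibre $F^i_k$ is not slope semistable, let $B^i\subset F^i_k$ be the maximal slope-destabilizing subsheaf (in the $\cO_{X_k}$-sense) and set
\[
F^{i+1}:=\ker\bigl(F^i\twoheadrightarrow F^i_k\twoheadrightarrow F^i_k/B^i\bigr).
\]
The crucial point is that $B^i$ is automatically an $L_k$-submodule: for any local section $x$ of $L_k$, the induced map $\bar\nabla(x)\colon B^i\to F^i_k/B^i$, $b\mapsto \nabla(x)(b)\bmod B^i$, is $\cO_{X_k}$-linear thanks to the Leibniz rule $\nabla(x)(fb)=\alpha_x(f)b+f\nabla(x)(b)$ (the first term lies in $B^i$ since $B^i$ is an $\cO_{X_k}$-submodule). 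Because $B^i$ is slope semistable with $\mu(B^i)>\mu_{\max}(F^i_k/B^i)$, no nonzero $\cO_{X_k}$-linear map $B^i\to F^i_k/B^i$ can exist, so $\bar\nabla(x)=0$, $\nabla(x)(B^i)\subset B^i$, and $F^{i+1}$ inherits an $L$-module structure from $F^i$.

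With the chain in hand, a standard snake-lemma diagram chase using $\pi$-flatness produces on $X_k$ short exact sequences
\[
0\longrightarrow F^i_k/B^i\longrightarrow F^{i+1}_k\longrightarrow B^i\longrightarrow 0,
\]
and one then follows Langton's numerical argument verbatim: the maximal destabilizer slopes $\mu(B^i)$ form a weakly monotone sequence, are bounded below by $\mu(F_K)$, and have bounded denominators (controlled by the rank), hence eventually stabilize. Once they are constant, the candidates $B^i$ (equivalently, the first piece of the Harder-Narasimhan filtration of $F^i_k$) all lie in a bounded family of slope semistable $L_k$-modules with fixed slope and bounded rank; by boundedness of semistable $L_k$-modules of bounded rank (an ingredient of \cite{La2}) only finitely many Hilbert polynomials can appear in this family, and an infinite strict descent is impossible, giving the desired contradiction.

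The main technical obstacle is the numerical bookkeeping needed to carry Langton's slope-monotonicity argument through in the generalized polarization setting: the slope is computed with respect to the fixed tuple $(D_0,\dots,D_{n-1})$ of relatively nef divisors, and the condition $D_0=D_1$ is precisely what allows the Hodge-type inequality governing the $c_1$-intersections across the above exact sequence to retain its sign. The $L$-invariance step is essentially automatic and the boundedness input is already available from \cite{La2}, so the real work sits in checking that Langton's termination argument remains valid with this notion of (generalized) slope.
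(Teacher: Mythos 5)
There is a genuine gap, and it sits exactly at the step you call ``the crucial point.'' The maximal destabilizing subsheaf $B^i\subset F^i_k$ \emph{in the $\cO_{X_k}$-sense} is not automatically an $L_k$-submodule. Your argument treats $\bar\nabla(x)\colon B^i\to F^i_k/B^i$ for a single local section $x$ of $L_k$ as if it were a globally defined $\cO_{X_k}$-linear map to which the slope inequality $\mu(B^i)>\mu_{\max}(F^i_k/B^i)$ applies; but $x$ only exists locally, and what is globally defined (using the $\cO_X$-linearity of $\nabla\colon L\to\cEnd_{\cO_S}E$ together with the Leibniz rule) is an $\cO_{X_k}$-linear map $B^i\to (F^i_k/B^i)\otimes\Omega_{L}$. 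Its vanishing would require $\mu(B^i)>\mu_{\max}\bigl((F^i_k/B^i)\otimes\Omega_L\bigr)$, which is not guaranteed when $\Omega_L$ has positive slopes. Indeed, if your claim were true it would force slope semistability of $L$-modules to coincide with slope semistability of the underlying sheaves; this already fails for Higgs bundles (modules over the trivial Lie algebroid), e.g.\ a stable Higgs bundle on a curve whose underlying bundle is unstable, where the maximal destabilizing line subbundle is precisely \emph{not} $\theta$-invariant. The paper avoids this by taking $B^i$ to be the maximal destabilizing subobject in the category of $L_k$-modules (the first step of the Harder--Narasimhan filtration computed with $L$-submodules), which makes the elementary modification $F^{i+1}$ an $L$-submodule by fiat; no invariance lemma is needed or available.

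The termination argument is also incomplete. Knowing that the slopes $\mu(B^i)$ stabilize and that the $B^i$ range over a bounded family with finitely many Hilbert polynomials does not make ``an infinite strict descent impossible'': the chain $F^0\supset F^1\supset\cdots$ is strictly descending by construction (compare $F\supset\pi F\supset\pi^2F\supset\cdots$, which is an infinite strict descent with constant fibres). The actual contradiction in Langton's scheme, and in the paper, comes later: once $\mu(B^i)$ is constant one shows the auxiliary kernels $C^n$ vanish, so the $B^n$ form a descending and the $G^n=F^n_k/B^n$ an ascending chain of torsion free sheaves of fixed slope, which stabilize to $B$ and $G$ with $F^n_k=B\oplus G$ for $n\gg0$; then $\hat Q=\lim_{\leftarrow}F/F^n$ is a destabilizing quotient of $F_{\hat K}$ over the completion, contradicting the semistability of $F_K$ because the Harder--Narasimhan filtration is stable under base field extension. (The paper also flags that one of the intermediate slope inequalities in the textbook version of this argument is false and must be replaced by a case analysis on whether $\mu(C^n)\ge\mu(B^{n+1})$.) You need to supply this limit construction; the boundedness input from \cite{La2} alone does not close the argument.
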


\begin{proof}
First let us note that we can assume that $F_k$ is torsion free as
an $\cO_{X_k}$-module (this follows, e.g., from \cite[Proposition
4.4.2]{HL} or can be proven using a similar method as below). We
use without warning the fact that for an $R$-flat $F$ the degrees
of $F_K$ and $F_k$ with respect to $(D_1,\dots ,D_{n-1})$
coincide. This follows from the fact that $F$ has a finite locally
free resolution on $X$ and intersection products are compatible
with specialization (see \cite[Expose X, Appendice]{SGA6}).

Let us set $F^0:=F$. If $F_k^0$ is not slope semistable then we take
the maximal destabilizing $L$-submodule $B^0$ in $F_k^0$ and denote
by $F^1$ the kernel of the composition $F^0\to F_k^0\to G^0:=F_k^0/B^0$. If
$F^1_k$ is semistable then we get the required submodule of $F$.
Otherwise, we repeat the same procedure for $F^1$. In this way we
construct a sequence of $L$-modules $F=F^0\supset F^1\supset
F^2\supset ...$ and the main point of the proof is to show that this
process cannot continue indefinitely.

Let us assume otherwise. First, let us note that we have short exact sequences
$$0\to G^n \to F^{n+1}_k\to B^n \to 0,$$
where $G^n=  F^n_k/B^n$. Let $C^n$ be the kernel of the
composition $B^{n+1}\to F^{n+1}_k\to B^n$.

If $C_n=0$ then $B_{n+1}\subset B_n$ and hence $\mu (B^{n+1})\le
\mu (B^{n})$. If $C^n\ne 0$ then
$$\mu (C^n)\le \mu _{\max}(G^n)< \mu (B^{n}),$$
where the first inequality comes from the fact that $C^n\subset
G^n$ and the second one follows from the fact that $B^n\subset
F^n_k$ is the maximal destabilizing subsheaf and $G^n=F^n_k/B^n$.

We claim that $\mu (B^{n+1})<\mu (B^{n})$. If $\mu (C^n)\ge \mu
(B^{n+1})$ then this inequality follows from the above inequality.
If $\mu (C^n)< \mu (B^{n+1})$ then $\mu (B^{n+1})<\mu
(B^{n+1}/C^n)$. But $B^{n+1}/C^n$ is isomorphic to a subsheaf of
$B^n$ and $B^n$ is semistable, so in this case we also have $\mu
(B^{n+1})<\mu (B^{n})$.

Therefore the sequence $\{\mu (B^{n})\}$ is non-increasing. But
$\mu (B^{n+1})<\mu (B^{n})$ is possible for only finitely many $n$
since  $r! \mu (B^n)\in \ZZ$ are bounded below by $r!\mu (F_k)$.
Therefore for all large $n$ we have $C^n=0$, i.e., we have
inclusions $B^n\supset B^{n+1}\supset ...$ and $G^n\subset
G^{n+1}\subset...$. For sufficiently large $n$ these sequences
consist of torsion free sheaves with the same slope, so they must
stabilize to $B$ and $G$, respectively. Then $F^{n}_k=B\oplus G$
for $n\gg 0$. Set $\hat R:={\displaystyle \lim_{\leftarrow}}\,
R/\pi^nR$ and let $\hat K$ be the quotient field of $\hat R$. Note
that $F/F^n$ is $R/\pi^n$-flat and as $\cO_{X_k}$-module has a
filtration with quotients isomorphic to $G$. Then $\hat
Q:={\displaystyle \lim_{\leftarrow}} \,  F/F^n$ is a destabilizing
quotient of $F_{\hat K}$. But the Harder--Narasimhan filtration is
stable under base field extension and therefore $F_K$ is also
unstable, contradicting our assumption.
\end{proof}

\medskip
Our exposition of proof of Langton's theorem is based on \cite{HL}
with some small changes (one of the inequalities in proof of
\cite[Theorem 2.B.1]{HL} is false and we need to give a slightly
different argument).

\medskip

Note that in the above theorem we allow the case when all $D_i$'s are zero.
In this case we claim that there exists an $L$-submodule $E\subset F$
such that $E_K=F_K$ and $E_k$ is torsion free as $\cO_{X_k}$-module (by definition
slope semistable sheaves are torsion free!).

\medskip
Let us recall that every slope semistable $L$-module $E$ has a
Jordan--H\"older filtration $E_0=0\subset E_1\subset ...\subset
E_m=E$ by $L$-submodules such that the associated graded sheaf
$\Gr (E)=\oplus E_i/E_{i-1}$ is \emph{slope polystable}, i.e., a
direct sum of slope stable (torsion free) $L$-modules of the same
slope.

The following theorem is motivated by theory of moduli spaces and
it generalizes \cite[Theorem 1)]{Lt}.

\begin{Theorem}\label{slope-Langton2}
Assume that the collection $(D_0, D_1,\dots ,D_{n-1})$ consists of
relatively ample divisors. Let $F$ be an $R$-flat $\cO_X$-coherent
$L$-module of relative pure dimension $n$ such that the
$L_K$-module $F_K=F\otimes _RK$ is slope semistable. Let $E_1$ and
$E_2$ be $L$-submodules of $F$ such that $(E_1)_K=(E_2)_K=F_K$,
$(E_1)_k$ and $(E_2)_k$ are slope semistable. Then the
reflexivizations of the associated graded slope polystable sheaves
$\Gr ((E_1)_k)$ and $\Gr ((E_2)_k)$ are isomorphic. Moreover, if
at least one of $(E_1)_k$ and $(E_2)_k$ is slope stable then there
exists an integer $n$ such that $E_1=\pi^nE_2$.
\end{Theorem}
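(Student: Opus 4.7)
The plan is to identify a symmetric structure on the special fibres that forces the Jordan--H\"older gradeds of $(E_1)_k$ and $(E_2)_k$ to agree. First, multiplication by $\pi^N$ gives an $L$-equivariant isomorphism $E_2 \xrightarrow{\sim} \pi^N E_2$ inducing an isomorphism of reductions; since $(E_1)_K = (E_2)_K$ forces $\pi^N E_2 \subseteq E_1$ for $N$ sufficiently large, I may replace $E_2$ by $\pi^N E_2$ and assume $E_2 \subseteq E_1$.

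The crucial case is $\pi E_1 \subseteq E_2 \subseteq E_1$. Set $P := E_2/\pi E_1$ and $Q := E_1/E_2$; both are coherent $\cO_{X_k}$-modules with $L_k$-action, and both are torsion free ($P$ sits inside $(E_1)_k$, and via the $\cO_S$-linear isomorphism $\pi\colon E_1/E_2 \xrightarrow{\sim} \pi E_1/\pi E_2$ also $Q$ sits inside $(E_2)_k$). This produces two short exact sequences of $L_k$-modules
\[
0 \to P \to (E_1)_k \to Q \to 0, \qquad 0 \to Q \to (E_2)_k \to P \to 0.
\]
As in the proof of Theorem \ref{slope-Langton}, degrees are preserved on specialization, so $\mu((E_1)_k)=\mu((E_2)_k)=:\mu$. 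Semistability of $(E_1)_k$ applied to the sub $P$ and the quotient $Q$ yields $\mu(P) \le \mu$ and $\mu(Q) \ge \mu$, while semistability of $(E_2)_k$ symmetrically yields $\mu(Q) \le \mu$ and $\mu(P) \ge \mu$. Thus $\mu(P)=\mu(Q)=\mu$ and $P, Q$ are themselves slope semistable of slope $\mu$. Since the Jordan--H\"older graded is additive in short exact sequences inside the abelian subcategory of slope-semistable $L_k$-modules of slope $\mu$, I obtain $\Gr((E_1)_k) \simeq \Gr(P) \oplus \Gr(Q) \simeq \Gr((E_2)_k)$.

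The general case $\pi^N E_1 \subseteq E_2 \subseteq E_1$ I would handle by induction on $N$, inserting the intermediate $E' := E_2 + \pi^{N-1}E_1$ (so that $(E', E_1)$ is in the $N=1$ regime and $(E_2, E')$ in the $(N{-}1)$-regime). The main obstacle is that $(E')_k$ need not be slope semistable, so the key lemma above does not apply directly to the pair $(E', E_1)$. My preferred workaround is to invoke Theorem \ref{slope-Langton} within the chain to produce a slope-semistable Langton-replacement $\widetilde E'$ and then to chain the resulting isomorphisms of Jordan--H\"older gradeds, with reflexivization absorbing any codimension-$\ge 2$ discrepancy that may appear. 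An alternative is to work with the $\pi$-adic filtration of $E_1/E_2$ and produce matched Jordan--H\"older factors directly by induction on its length, using the two-sequence pattern above at each step. Either route yields $\Gr((E_1)_k)^{**} \simeq \Gr((E_2)_k)^{**}$ in the general case.

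For the moreover part, suppose $(E_1)_k$ is slope stable (the case when $(E_2)_k$ is stable being symmetric). Ampleness of the $D_i$ guarantees that a proper nonzero $L$-subsheaf of a slope-stable sheaf has strictly smaller slope, so in the $N=1$ symmetric sequences above, the equalities $\mu(P)=\mu(Q)=\mu((E_1)_k)$ force $P=0$ or $Q=0$, i.e.\ $E_2 = \pi E_1$ or $E_2 = E_1$. Iterating this dichotomy along the inductive chain yields $E_2 = \pi^m E_1$ for some $m \ge 0$; undoing the initial shift $E_2 \mapsto \pi^N E_2$ gives $E_1 = \pi^n E_2$ for $n = N - m$, as required.
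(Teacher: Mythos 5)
Your $N=1$ lemma is correct and is the classical Langton-style argument: for $\pi E_1\subseteq E_2\subseteq E_1$ the two symmetric extensions of $P=E_2/\pi E_1$ by $Q=E_1/E_2$ force $\mu(P)=\mu(Q)=\mu$, hence semistability of $P$ and $Q$, hence equality of the Jordan--H\"older gradeds up to reflexivization. The genuine gap is the reduction of the general case to this one, and you have flagged it yourself: the intermediate lattice $E'=E_2+\pi^{N-1}E_1$ need not have semistable special fibre, and neither workaround closes the gap as stated. Replacing $E'$ by a Langton modification $\widetilde{E}'$ destroys the chain structure: Theorem \ref{slope-Langton} produces \emph{some} $L$-submodule of $E'$ with semistable reduction, but nothing guarantees $\pi E_1\subseteq\widetilde{E}'$ or $\pi\widetilde{E}'\subseteq E_2$, so the $N=1$ lemma does not apply to consecutive pairs and the isomorphisms cannot be chained. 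What actually rescues the induction is the missing lemma that $(E')_k$ is semistable \emph{up to torsion supported in codimension $\ge 2$}: since $E'/E_2$ is a quotient of $(E_1)_k$ and $E_2/\pi E'$ is a quotient of $(E_2)_k$, every torsion-free quotient of $(E')_k$ has slope $\ge\mu$, while $\deg(E')_k=\deg F_K=r\mu$; for ample $D_i$ this forces the torsion of $(E')_k$ to have degree zero, hence codimension $\ge 2$ support, and the quotient by it to be semistable of slope $\mu$. One must then rerun your $N=1$ argument modulo codimension $\ge 2$ --- which is precisely why the statement only asserts an isomorphism of reflexivizations. This is a real piece of content, not a formality, and it is absent from your sketch.

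For the ``moreover'' part the paper argues quite differently and avoids the induction entirely: after multiplying by a power of $\pi$ one arranges $E_1\subseteq E_2$ with the induced map $(E_1)_k\to(E_2)_k$ nonzero at the generic point of $X_k$; stability of $(E_1)_k$ against semistability of $(E_2)_k$ at the same slope forces this map to be injective, and equality of the Hilbert polynomials (both equal $P(F_K)$ by flatness) upgrades it to an isomorphism, whence $E_1=E_2$ after the normalization. Your dichotomy $E_2=E_1$ or $E_2=\pi E_1$ is correct in the $N=1$ regime, but ``iterating along the inductive chain'' inherits the gap above, since the intermediate lattices are not known to have stable or even semistable reduction; I would adopt the paper's direct argument here. (For calibration: the paper proves only the ``moreover'' assertion and leaves the first to the reader, so your $N=1$ analysis is a genuine addition --- it just stops short of the full claim.)
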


\begin{proof}
We prove only the second part, leaving proof of the first one to
the reader. Assume that $(E_1)_k$ is slope stable. Consider the
discrete valuation ring $\cO_{X, \eta}$, where $\eta$ is the
generic point of $X_k$. Multiplying $E_1$ by some power of $\pi$,
we can assume that $E_1\otimes _{\cO_X} \cO_{X, \eta}\subset
E_2\otimes _{\cO_X}\cO_{X, \eta}$ and the induced map $E_1\otimes
k({\eta})\to E_2\otimes k(\eta)$ is non-zero. But $E_1$ and $E_2$
are torsion free so $E_1\subset E_2$. Since $(E_1)_k$ is slope
stable the non-zero map $(E_1)_k\to (E_2)_k$ between slope
semistable sheaves of the same slope must be an inclusion. Since
the Hilbert polynomials of $(E_1)_k$ and $(E_2)_k$ coincide (from
flatness of $E_1$ and $E_2$), it must be an isomorphism.
\end{proof}

\medskip

Let $Y$ be a projective scheme over a field $k$ and let $L_Y$ be a
$k$-Lie algebroid  on $Y$. Let us fix an ample line bundle
$\cO_Y(1)$ on $Y$. Let $\Coh ^L_d(Y)$ be the full subcategory of
the category of $L$-modules which are coherent as $\cO_Y$-modules
and whose objects are sheaves supported in dimension $\le d$. Then
we can consider the quotient category $\Coh ^L_{d,d'}(Y):=\Coh
^L_{d}(Y)/\Coh ^L_{d'-1}(Y)$. For any object of $\Coh
^L_{d,d'}(Y)$ one can define its Hilbert polynomial which can be
used to define notion of (semi)stability in this category.

\medskip
We can generalize Langton's theorem to singular schemes at the
cost of dealing with only one ample polarization. In this case
compatibility of intersection product with specialization follows
from computation of the Hilbert polynomial. One can also
generalize Theorem  \ref{slope-Langton} so that it works for other
kinds of stability as defined above.

Let $X\to \Spec R$ be a projective morphism with relatively ample 
line bundle $\cO_X(1)$ and let $L$ be a smooth $\cO_S$-Lie algebroid on $X$. 
The following Langton's type theorem  generalizes \cite[Theorem 10.1]{Si5} 
and \cite[Theorem 2.B.1]{HL}:

\begin{Theorem}\label{Langton}
Let $F$ be an $R$-flat $\cO_X$-coherent $L$-module of relative
dimension $d$. Assume that the $L_K$-module $F_K=F\otimes _RK$
is pure of dimension $d$ and semistable in $\Coh ^L_{d,d'}(X_K)$ for some $d'<d$. 
Then there exists an $L$-submodule $E\subset F$ such that $E_K=F_K$ and $E_k$
is semistable in $\Coh ^L_{d,d'}(X_k)$.
\end{Theorem}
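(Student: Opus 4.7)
The plan is to adapt the proof of Theorem \ref{slope-Langton} to the quotient category $\Coh ^L_{d,d'}(X_k)$, using the normalized Hilbert polynomial in this quotient category (i.e., the usual normalized Hilbert polynomial modulo polynomials of degree $<d'$, ordered lexicographically) in place of the slope. First, I would check that the basic tools transfer to this setting: subobjects in $\Coh ^L_{d,d'}$ of a given coherent $L$-module $F_k$ correspond to $L$-submodules of $F_k$ modulo those supported in dimension $<d'$; the set of Hilbert polynomials in $\Coh ^L_{d,d'}$ is discrete, bounded below by $p(F_k)$ on subobjects (using boundedness as in \cite{La2}); and in the quotient category every object has a unique maximal destabilizing subobject.

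Granting this, set $F^0:=F$. If $F^0_k$ is semistable in $\Coh ^L_{d,d'}(X_k)$, we are done. Otherwise, let $B^0\subset F^0_k$ represent the maximal destabilizing subobject and set $G^0:=F^0_k/B^0$; define $F^1$ as the kernel of $F^0\to F^0_k\to G^0$. Iterating yields a descending chain $F=F^0\supset F^1\supset F^2\supset\dots$ of $L$-submodules together with short exact sequences $0\to G^n\to F^{n+1}_k\to B^n\to 0$, and we introduce $C^n:=\ker(B^{n+1}\to F^{n+1}_k\to B^n)$ exactly as in the proof of Theorem \ref{slope-Langton}.

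The core step is to show that the sequence $p(B^n)$ is non-increasing. If $C^n=0$ in the quotient category, then $B^{n+1}\subset B^n$ there, so $p(B^{n+1})\le p(B^n)$. If $C^n\neq 0$, then $p(C^n)\le p_{\max}(G^n)<p(B^n)$ since $B^n$ is the maximal destabilizing subobject of $F^n_k$; splitting into the cases $p(C^n)\ge p(B^{n+1})$ and $p(C^n)<p(B^{n+1})$ (in the latter, $B^{n+1}/C^n$ is a subobject of the semistable $B^n$) yields $p(B^{n+1})<p(B^n)$, repeating verbatim the dichotomy in the proof of Theorem \ref{slope-Langton}. Discreteness and boundedness from below of the values $p(B^n)$ then force $C^n=0$ for $n\gg 0$, so the $B^n$ form a descending and the $G^n$ an ascending chain in the quotient category, and both stabilize to subobjects $B$ and $G$ of the appropriate purity with $F^n_k\cong B\oplus G$ in $\Coh ^L_{d,d'}(X_k)$ for $n\gg 0$.

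The conclusion is then identical to that of Theorem \ref{slope-Langton}: set $\hat R:=\varprojlim R/\pi^nR$ with quotient field $\hat K$, note that $F/F^n$ is $R/\pi^nR$-flat with $\cO_{X_k}$-associated graded having quotients isomorphic to $G$ in $\Coh ^L_{d,d'}(X_k)$, and form $\hat Q:=\varprojlim F/F^n$; this is a destabilizing $L$-quotient of $F_{\hat K}$ in $\Coh ^{L_{\hat K}}_{d,d'}(X_{\hat K})$, and since the Harder--Narasimhan filtration in this category is stable under base field extension we reach a contradiction with the semistability of $F_K$. The main obstacle I expect is not the iteration itself, which is essentially formal, but the foundational package for $\Coh ^L_{d,d'}$: existence and uniqueness of the maximal destabilizing subobject, the correct discrete invariant controlling the descent, and base-change stability of the analogue of the Harder--Narasimhan filtration. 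Once these are set up, the rest is a transcription of Theorem \ref{slope-Langton}.
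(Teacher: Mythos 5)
Your proposal follows essentially the same route as the paper, which proves this theorem by transcribing the argument of \cite[Theorem 2.B.1]{HL} into the category $\Coh^L_{d,d'}$ with the same corrected dichotomy on $C^n$ (the case split on whether $p(C^n)\ge p(B^{n+1})$) that you reproduce, and which handles the foundational issues you flag (reflexive hulls on $X_k$, absence of projective Quot-schemes) by embedding $X$ into projective space over $R$ and by the limit argument from Theorem \ref{slope-Langton}. No substantive differences.
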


\begin{proof}
The proof is almost the same as the proof of \cite[Theorem
2.B.1]{HL}. However, there are a few small problems that we meet in the proof.
The first one is that we need to define reflexive hulls of sheaves
on the special fiber $X_k$. This can be done by embedding $X$ into
a fixed smooth $R$-scheme (e.g., use some multiple of the polarization 
$\cO_X(1)$ to embedd $X$ into some projective space over $R$). 

The second problem is the same as before: one of the inequalities in proof of  
\cite[Theorem 2.B.1]{HL} is false and we need to use a slightly different argument similar to the one used in proof of Theorem \ref{slope-Langton}. 
We sketch the necessary changes using the notation of proof of 
\cite[Theorem 2.B.1]{HL}. 
If $C^n\ne 0$ then we only have
$$p(C^n)\le p_{\max}(G^n)< p(B^n) \mod \, \QQ[T]_{\delta -1}.$$ 
Hence if $p_{d, \delta}(C^n)\ge p_{d, \delta}(B^{n+1})$ then 
$p_{d, \delta}(B^{n+1})<p_{d, \delta}(B^{n})$.  If 
$p_{d, \delta}(C^n)< p_{d, \delta}(B^{n+1})$  then we have
$p_{d, \delta}(B^{n+1})< p_{d, \delta}(B^{n+1}/C^n)\le p_{d, \delta}(B^{n}).$
This proves that if $C^n\ne 0$ then we always have $p_{d, \delta}(B^{n+1})<p_{d, \delta}(B^{n})$ as needed in the argument.

The last problem is the use of Quot-schemes in \cite{HL}, which do not exist as
projective schemes in our situation. This can be solved as in proof of Theorem
\ref{slope-Langton}.
\end{proof}

\begin{Theorem}\label{Langton2}
Let $F$ be an $R$-flat $\cO_X$-coherent $L$-module of relative
pure dimension $d$ such that the $L_K$-module $F_K=F\otimes _RK$
is semistable in $\Coh ^L_{d,d'}(X_K)$ for some $d'<d$.
Let  $E_1$ and $E_2$ be $L$-submodules of  $F$ such that $(E_1)_K=(E_2)_K=F_K$,
$(E_1)_k$ and $(E_2)_k$ are semistable in $\Coh ^L_{d,d'}(X_k)$
and at least one of them is stable. Then there exists an integer $n$ such that
$E_1=\pi^nE_2$ in $\Coh ^L_{d,d'}(X_K)$.
\end{Theorem}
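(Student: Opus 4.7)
The plan is to imitate the proof of Theorem~\ref{slope-Langton2}, replacing slope stability with stability in the quotient category $\Coh^L_{d,d'}(X_k)$. Without loss of generality I may assume $(E_1)_k$ is stable. After replacing each $E_j$ by its saturation in $F$ (so that $F/E_j$ is $R$-flat), both $(E_1)_k$ and $(E_2)_k$ embed in $F_k$ and are consequently pure of dimension $d$; this purity will be crucial for the non-vanishing arguments below.

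First I would produce a nonzero comparison map between the special fibres. Since $(E_1)_K = (E_2)_K = F_K$, there exists a largest integer $n$ with $\pi^n E_2 \subset E_1$. For this $n$ the inclusion does not factor through $\pi E_1$, so after identifying $\pi^n E_2/(\pi \cdot \pi^n E_2) \cong (E_2)_k$ and $E_1/\pi E_1 = (E_1)_k$ via multiplication by $\pi^n$, one obtains a nonzero $L_k$-linear map $\varphi\colon (E_2)_k \to (E_1)_k$. Purity of $(E_1)_k$ guarantees that the image $Q=\mathrm{im}\,\varphi\subset (E_1)_k$ is pure of dimension $d$, so it represents a nonzero object in $\Coh^L_{d,d'}(X_k)$.

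Next I would run the standard stability-versus-semistability comparison inside the quotient category. Semistability of $(E_2)_k$ combined with $Q$ being a quotient gives $p(Q)\ge p((E_2)_k)$ modulo polynomials of degree $<d'$, while stability of $(E_1)_k$ applied to the proper subobject $Q\subset (E_1)_k$ would force the opposite strict inequality $p(Q)<p((E_1)_k)$. Since $R$-flatness yields $p((E_1)_k)=p((E_2)_k)$, the only consistent possibility is $[Q]=[(E_1)_k]$ in $\Coh^L_{d,d'}(X_k)$, i.e.\ $(E_1)_k/Q$ has support of dimension $\le d'-1$.

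The last step is a Nakayama-type argument. Flatness of $E_1$ and $\pi^n E_2$ over $R$ yields the four-term exact sequence
\[
0 \to \ker\varphi \to (E_2)_k \xrightarrow{\varphi} (E_1)_k \to (E_1/\pi^n E_2)/\pi(E_1/\pi^n E_2) \to 0,
\]
so the preceding step gives $(E_1/\pi^n E_2)/\pi(E_1/\pi^n E_2)$ of support dimension $\le d'-1$. Because $(E_1/\pi^n E_2)\otimes_R K=0$, every stalk of $E_1/\pi^n E_2$ is annihilated by a power of $\pi$, so Nakayama locally implies that the support of $E_1/\pi^n E_2$ coincides with that of $(E_1/\pi^n E_2)/\pi(E_1/\pi^n E_2)$, hence has dimension $\le d'-1$. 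This is exactly the required equality $E_1=\pi^n E_2$ in $\Coh^L_{d,d'}$. The step I expect to be the most delicate is guaranteeing that $Q$ is nonzero in $\Coh^L_{d,d'}(X_k)$: without purity of $(E_1)_k$ one could have $Q$ of dimension $<d'$, collapsing everything in the quotient, which is why the initial saturation of $E_1,E_2\subset F$ is essential.
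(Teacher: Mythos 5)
Your overall architecture (normalize by a power of $\pi$, extract a nonzero map between special fibres, play stability of one against semistability of the other, finish with Nakayama) is the right one --- the paper in fact states this theorem without proof, and its proof of the slope analogue, Theorem \ref{slope-Langton2}, follows the same pattern but chooses the power of $\pi$ \emph{after} localizing at the generic point of $X_k$ rather than globally. The genuine gap is in your saturation step, which is the one thing your argument hinges on. Since $(F/E_j)_K=0$, the coherent sheaf $F/E_j$ is killed by a power of $\pi$, so it is $R$-flat only if it is zero; likewise $(E_j)_k\to F_k$ has kernel $\mathrm{Tor}_1^R(F/E_j,k)=(F/E_j)[\pi]$, which is nonzero whenever $E_j\ne F$. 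So $(E_j)_k$ never embeds in $F_k$, and it need not be pure of dimension $d$: semistability in $\Coh^L_{d,d'}(X_k)$ only bounds $T_{d-1}((E_j)_k)$ by dimension $d'-1$, and that residual torsion can swallow your map. Concretely, take $X=\PP^2_R$, $d=2$, $d'=1$, $L$ trivial, $F=E_2=\cO_X$, and $E_1\subset\cO_X$ the ideal sheaf of a $k$-point of the special fibre (locally $(\pi,x,y)$). Both special fibres are stable in $\Coh^L_{2,1}$, the minimal $n$ with $\pi^nE_2\subset E_1$ is $n=1$, and the image $Q$ of $(E_2)_k\to (E_1)_k$ is $\pi\cO_X/\pi E_1\simeq \cO_X/E_1$, a skyscraper, hence zero in $\Coh^L_{2,1}(X_k)$; your inequality $p(Q)\ge p((E_2)_k)$ is then vacuous and the argument halts (the correct answer here is $n=0$).

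The gap is repairable, but not for free. The right normalization replaces $E_j$ by the preimage in $F$ of $T_{d-1}(F/E_j)$, so that $F/E_j$ becomes pure of dimension $d$ as an $\cO_{X_k}$-module (not $R$-flat); then $(E_j)_k$ is an extension of a subsheaf of the pure sheaf $F_k$ by $(F/E_j)[\pi]$, which is pure of dimension $d$ or zero, so $(E_j)_k$ is genuinely pure and your non-vanishing argument works. But one must then \emph{prove} that this operation changes nothing in $\Coh^L_{d,d'}$, i.e.\ that $\dim T_{d-1}(F/E_j)\le d'-1$: this follows from the exact sequence $0\to T[\pi]\to (E_j)_k\to (E_j')_k\to T\otimes k\to 0$ for $T=T_{d-1}(F/E_j)$ and $E_j'$ its preimage, the equality of Hilbert polynomials coming from $R$-flatness, the bound $\dim T_{d-1}((E_j)_k)\le d'-1$ supplied by semistability, and Nakayama. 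That verification (or, alternatively, choosing $n$ by localizing at the $d$-dimensional generic points of $\mathrm{Supp}\,F_k$, as in the proof of Theorem \ref{slope-Langton2}) is the missing content. A minor slip besides: the integer you want is the \emph{smallest} $n$ with $\pi^nE_2\subset E_1$, not the largest.
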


\subsection{Semistable filtrations on sheaves with connection}

Let $L$ be a smooth Lie algebroid on a normal projective variety
$X$ defined over an algebraically closed field $k$. Let us
consider a torsion free coherent $\cO _X$-module $E$ with an
integrable $d_{\Omega_L}$-connection $\nabla $ (i.e., an
$L$-module whose underlying sheaf is coherent and torsion free as
an $\cO_X$-module). We say that a filtration $E=N^0\supset
N^1\supset ...\supset N^m=0$ satisfies \emph{Griffiths
transversality} if $\nabla (N^i)\subset N^{i-1}\otimes
_{\cO_X}\Omega_L$ and the quotients $N^i/N^{i+1}$ are torsion
free. For every such filtration the associated graded object $\Gr
_N(E):=\bigoplus _i N^i/N^{i+1}$ carries a canonical
$\Omega_L$-Higgs field $\theta$ defined by $\nabla$. Note that
$(\Gr_N(E), \theta)$ is a system of $L$-Hodge sheaves. A
convenient way of looking at this is by means of the Rees
construction. More precisely, if $N^{\bullet}$ is a Griffiths
transverse filtration on $(E, \nabla)$ then we can consider the
subsheaf
$$\xi (E, N^{\bullet}):=\sum t^{-i}N^i\otimes \cO_{X\times \AA ^1}\subset p_X^*E$$
on $X\times \AA ^1$. By Griffiths transversality of the filtration
$N^{\bullet}$ the connection $t\nabla$ on $\xi (E,
N^{\bullet})|_{X\times \GG_m}$ extends to a
$t$-$d_{\Omega_L}$-connection on $X\times \AA ^1$ (i.e., we get an
$L^R$-module on $X\times \AA^1$). In the limit as $t\to 0$ we get
exactly the above described system of $L$-Hodge sheaves
$(\Gr_N(E), \theta)$.

\medskip
In the remainder of this section to define semistability we use
a fixed collection $(D_0,D_1,\dots ,D_{n-1})$ of nef divisors such that $D_0=D_1$.

After Simpson \cite{Si5} we say that a Griffiths transverse
filtration $N^{\bullet}$ on $(E, \nabla)$ is \emph{slope
gr-semistable} if the associated $\Omega_L$-Higgs sheaf
$(\Gr_N(E), \theta)$ is slope semi\-sta\-ble. A \emph{partial
$L$-oper} is a triple $(E, \nabla, N^{\bullet})$ consisting of a
torsion free coherent $\cO _X$-module $E$ with an integrable
$d_{\Omega_L}$-connection $\nabla$ and a Griffiths transverse
filtration $N^{\bullet}$ which is slope gr-semistable.

\begin{Theorem} \label{existence-of-gr-ss-Griffiths-transverse-filtration}
If $(E, \nabla)$ is slope semistable then there exists a
canonically defined slope gr-semistable Griffiths transverse
filtration $N^{\bullet}$ on $(E, \nabla)$ providing it with a
partial $L$-oper structure. This filtration is preserved by the
automorphisms of $(E,\nabla)$.
\end{Theorem}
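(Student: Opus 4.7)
The plan is to translate the problem into the language of the Rees construction from Subsection~\ref{deformation-over-line} and then to apply Langton's Theorem~\ref{slope-Langton}. First I would establish the Rees dictionary: Griffiths transverse filtrations $N^{\bullet}$ on $(E,\nabla)$ correspond bijectively, via the assignment $N^{\bullet}\mapsto\xi(E,N^{\bullet})$ of Subsection~\ref{deformation-over-line}, to $\Gm$-equivariant $\cO_{\AA^1}$-flat $L^R$-modules $\mathcal{F}$ on $X\times\AA^1$ whose restriction to $X\times(\AA^1\setminus\{0\})$ is a twist of the pull-back of $(E,\nabla)$. Under this dictionary the fibre of $\mathcal{F}$ at $t=0$ is $(\Gr_N(E),\theta)$ with its canonical $\Omega_L$-Higgs field, and by Corollary~\ref{Higgs=Hodge} the gr-semistability of $N^{\bullet}$ is equivalent to the slope semistability of that fibre as an $\Omega_L$-Higgs sheaf, hence as an $L^R|_{t=0}$-module.

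With this dictionary in hand, I would begin with the trivial filtration $E\supset 0$, whose Rees module is $p_X^*E$ equipped with the $t$-$d_{\Omega_L}$-connection $t\nabla$, and restrict it to $\Spec R$ for $R=\cO_{\AA^1,0}$. Its generic fibre is $(E,\nabla)$, which is slope semistable by hypothesis, so Theorem~\ref{slope-Langton} produces an $L^R$-submodule $\mathcal{E}$ with the same generic fibre and slope semistable special fibre. The essential observation is that the iterative procedure inside the proof of Theorem~\ref{slope-Langton} only uses the unique maximal destabilizing $L$-subsheaves in the special fibres, so every Langton step is automatically $\Gm$-equivariant; it follows that $\mathcal{E}$ is $\Gm$-equivariant, and extending it trivially across $X\times(\AA^1\setminus\{0\})$ makes it a $\Gm$-equivariant $L^R$-subsheaf on all of $X\times\AA^1$. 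Under the Rees dictionary this subsheaf comes from a Griffiths transverse filtration $N^{\bullet}$ on $(E,\nabla)$ whose associated graded is slope semistable, giving the desired partial $L$-oper.

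Canonicity and invariance under automorphisms of $(E,\nabla)$ both follow from the canonicity of each step: any $L$-automorphism $\varphi$ of $(E,\nabla)$ extends to a $\Gm$-equivariant $L^R$-automorphism of the initial Rees module $p_X^*E$, and since every Langton modification is determined by a uniquely characterized sheaf (the maximal destabilizing $L$-subsheaf in the special fibre), $\varphi$ preserves $\mathcal{E}$ and hence the resulting filtration. I expect the principal technical obstacle to be making the Rees dictionary completely rigorous, in particular verifying that one Langton elementary modification of the Rees module corresponds to precisely one of Simpson's elementary transformations of Griffiths transverse filtrations; once that dictionary is in place, the termination statement in Theorem~\ref{slope-Langton} forces Simpson's inductive construction to terminate and thereby produces the sought canonical gr-semistable filtration.
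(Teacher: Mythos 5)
Your proposal is correct and follows essentially the same route as the paper: restrict the Rees module of the trivial filtration to $X_R$ for $R=\cO_{\AA^1,0}$, identify Simpson's iterative modifications with the Langton iteration of Theorem~\ref{slope-Langton}, use Corollary~\ref{Higgs=Hodge} to see that each maximal destabilizing subsheaf is a sub-system of Hodge sheaves (equivalently, that each Langton step stays $\Gm$-equivariant and hence comes from a Griffiths transverse filtration), and deduce termination and canonicity from Langton's argument and the uniqueness of the maximal destabilizing subsheaf. The one point you flag as a technical obstacle is exactly what the paper makes explicit, by writing the new filtration as $N_{n+1}^m=\ker\bigl(E\to (E/N_n^m)/B^n_{m-1}\bigr)$ and checking $\pi F^n\subset F^{n+1}\subset F^n$.
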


\begin{proof}
Let $R$ be a localization of $\AA ^1$ at $0$ and let $\ti L$ be
the smooth Lie algebroid on $X_R=X\times _k \Spec R$ obtained by
restricting of $L^R$ from $X\times _k\AA^1$. Consider the trivial
filtration $E=N^0\supset N^1=0$. It satisfies Griffiths
tranversality so we can associate to it via the  Rees construction
and restricting to $X_R$, an $R$-flat $\cO_{X_R}$-coherent $\ti
L$-module $F^0=F$ (in fact $F=(p_X^*E, \pi \nabla )$).

Now suppose that we have defined an $\ti L$-submodule $F^n\subset
F$ coming by restriction from the Rees construction associated to
a Griffiths transverse filtration $N_n^{\bullet}$ of $E$. If the
associated $\Omega_L$-Higgs sheaf $F^n_k=(\Gr_{N_n}(E), \theta_n)$
is semistable then we get the required filtration. Otherwise, we
consider its maximal destabilizing $\Omega_L$-Higgs subsheaf
$B^n$. But $(\Gr_{N_n}(E), \theta _n)$ is a system of
$\Omega_L$-Hodge sheaves, so by Corollary \ref{Higgs=Hodge} $B^n$
is also a system of $\Omega_L$-Hodge sheaves. Let us write
$B^n=\bigoplus B^n_m$, where $B^n_m\subset
\Gr_{N_n}^m(E)=N_n^m/N_n^{m+1}$. Then we can define a new
Griffiths transverse filtration $N_{n+1}^{\bullet}$ on $E$ by
setting
$$N_{n+1}^{m}:=\ker \left(E\to \frac{E/N_n^m}{B^n_{m-1}} \right).$$
Let $F^{n+1}$ denote the restriction to $X_R$ of the $L^R$-module
associated by the Rees construction to $N_{n+1}^{\bullet}$. We
need to prove that this procedure cannot continue indefinitely. To
show it, it is sufficient to check that we follow the same
procedure as the one described in the proof of Theorem
\ref{Langton}.

By construction $\pi F^n\subset F^{n+1}\subset F^n$ and in
particular $F^{n+1}_K=F^n_K$. On the other hand, on the special
fiber of $X_R\to \Spec R$ we have a short exact sequence
$$0\to F^n_k/B^n\to F^{n+1}_k=({\Gr}_{N_{n+1}}(E), \theta_{n+1}) \to B^n \to 0$$
coming from the definition of the filtration $N_{n+1}^{\bullet}$.
This shows that $\pi F^{n}$ is the kernel of the composition
$F^{n+1}\to F^{n+1}_k\to B^n$. But then $F^{n+1}$ is the kernel of
the composition $F^n\to F^n_k\to F^n_k/B^n$. Now the proof of
Theorem \ref{Langton} shows that this procedure must finish.

Since the Harder--Narasimhan filtration is canonically defined, the
above described procedure is also canonical and the obtained filtration is
preserved by the automorphisms of $(E, \nabla)$.
\end{proof}

\medskip
In the following the canonical filtration $N^{\bullet}$
from Theorem \ref{existence-of-gr-ss-Griffiths-transverse-filtration}
will be called \emph{Simpson's filtration} of $(E, \nabla)$ and denoted by $N^{\bullet}_S$.
The reason is
that apart from many spectacular results due to Simpson in
non-abelian Hodge theory, the construction of the filtration
described in the proof of the above theorem was done by Simpson in
\cite[Section 3]{Si5} for the usual Higgs bundles on complex
projective curves. However, our proof of the fact that the
procedure stops is different.

Theorem \ref{existence-of-gr-ss-Griffiths-transverse-filtration}
generalizes \cite[Theorem 2.5]{Si5} to higher dimensions as asked
for at the end of  \cite[Section 3]{Si5}. Indeed, in the
characteristic zero case every vector bundle with an integrable
connection has vanishing Chern classes. In particular, any
saturated subsheaf of such a vector bundle which is preserved by
the connection (is locally free and) has vanishing Chern classes.
So any vector bundle with an integrable connection is slope
semistable (with respect to an arbitrary polarization). This
argument fails in the logarithmic case which shows that the above
theorem is a correct analogue in this case.

\medskip

Note that there can be many slope gr-semistable filtrations
providing $(E, \nabla)$ with a partial $L$-oper structure. This
depends on the choice of the Griffiths transverse filtration at
the beginning of our procedure (in the proof of Theorem
\ref{existence-of-gr-ss-Griffiths-transverse-filtration} we used
the canonical choice). In general, all the obtained filtrations
are related as described by the following corollary
which follows from Theorem \ref{slope-Langton2}:

\begin{Corollary}\label{YY}
If $N^{\bullet}$ and $M^{\bullet}$ are two slope gr-semistable
Griffiths transverse filtrations on $(E, \nabla)$ then the
reflexivizations of the associated-graded slope poly\-stable
$\Omega_L$-Higgs sheaves obtained from their Jordan--H\"older
filtrations are isomorphic. In particular, if the associated
$\Omega_L$-Higgs sheaf is slope stable then $(E, \nabla)$ carries
a unique gr-semistable Griffiths transverse filtration.
\end{Corollary}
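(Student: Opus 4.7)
The plan is to deform $(E,\nabla)$ in two different ways via the Rees construction---one for each filtration---and then to compare the two resulting lattices using Theorem~\ref{slope-Langton2}. Let $R$ be the localization of $\AA^1$ at $0$, with uniformizer $t$ and fraction field $K$, and let $\ti L$ denote the smooth Lie algebroid on $X_R:=X\times_k\Spec R$ obtained by restricting the deformation $L^R$ of Subsection~\ref{deformation-over-line}. For each of the filtrations $N^{\bullet}$ and $M^{\bullet}$ the Rees construction produces an $R$-flat $\cO_{X_R}$-coherent $\ti L$-module; call these $F_N$ and $F_M$. As in the proof of Theorem~\ref{existence-of-gr-ss-Griffiths-transverse-filtration}, the generic fibers $(F_N)_K$ and $(F_M)_K$ are both canonically isomorphic to $(p_X^*E,t\nabla)|_{X_K}$, while the special fibers identify with the associated-graded systems of $\Omega_L$-Hodge sheaves $(\Gr_N(E),\theta_N)$ and $(\Gr_M(E),\theta_M)$, which are slope semistable by the gr-semistability assumption (equivalently slope semistable as $\Omega_L$-Higgs sheaves by Corollary~\ref{Higgs=Hodge}).

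Next I would exhibit $F_N$ and $F_M$ as $\ti L$-submodules of a common $R$-flat ambient module with identified generic fibers. Since both are $R$-lattices in the same $\ti L_K$-module, one has $t^mF_M\subset F_N$ for some $m\ge 0$, and because multiplication by $t^m$ is an $\ti L$-module isomorphism $F_M\to t^mF_M$, the special fiber $(t^mF_M)_k$ remains canonically isomorphic to $(F_M)_k$. Setting $F:=F_N$, $E_1:=F_N$, $E_2:=t^mF_M$ puts us in the setting of Theorem~\ref{slope-Langton2}: the required hypothesis that $F_K$ be slope semistable holds because existence of a gr-semistable Griffiths transverse filtration forces $(E,\nabla)$ itself to be slope semistable, and this is preserved under the rescaling $\nabla\mapsto t\nabla$ and the field extension to $K$. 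The first conclusion of Theorem~\ref{slope-Langton2} then gives exactly the desired isomorphism of the reflexivizations of the Jordan--H\"older gradeds of $(\Gr_N(E),\theta_N)$ and $(\Gr_M(E),\theta_M)$, yielding the first claim.

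For the ``in particular'' statement, suppose $(\Gr_N(E),\theta_N)$ is slope stable. Then $(E_1)_k$ is slope stable, so the moreover-clause of Theorem~\ref{slope-Langton2} applies and yields $F_N=t^{n+m}F_M$ for some integer $n$. Unwinding the Rees formula $\xi(E,-)=\sum t^{-i}(-)^i$, this equality of lattices forces $N^i=M^{i+n+m}$ for all $i$; the normalization $N^0=M^0=E$ (and vanishing for large index) then fixes $n+m=0$, hence $N^{\bullet}=M^{\bullet}$.

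The main point where care will be needed is the identification of the special fiber of each Rees module, viewed as an $\ti L_k$-module, with the corresponding system of $\Omega_L$-Hodge sheaves, together with the equivalence of slope semistability in the two categories via Corollary~\ref{Higgs=Hodge}; the twist by $t^m$ and the reduction to a common ambient are routine bookkeeping once that is set up. A secondary matter is that Theorem~\ref{slope-Langton2} is stated with ample polarizations, whereas this subsection fixes only a collection of nef divisors with $D_0=D_1$, so one should also verify that both the first and the moreover parts of Langton's argument go through in the nef setting in parallel with Theorem~\ref{slope-Langton}.
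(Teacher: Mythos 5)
Your proposal is correct and follows exactly the route the paper intends: the paper deduces this corollary directly from Theorem~\ref{slope-Langton2} applied to the two Rees modules over the localization of $\AA^1$ at $0$, which is precisely your argument. You merely supply the details the paper leaves implicit (the $t^m$-rescaling to get a common lattice, gr-semistability forcing semistability of the generic fibre, and the nef-versus-ample caveat, which is a discrepancy already present in the paper's own statements).
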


The above corollary generalizes \cite[Corollary 4.2]{Si5}. Note
that Simpson's proof does not work so easily in our situation as
in higher dimensions we do not have appropriate moduli spaces at
our disposal.

Let us also note that any slope gr-semistable filtration can be
refined so that the associated graded $\Omega_L$-Higgs sheaf is
slope polystable (in which case its reflexivization is uniquely
determined by $(E, \nabla)$ up to an isomorphism).

\medskip

As an immediate application of Theorem \ref{existence-of-gr-ss-Griffiths-transverse-filtration}
we also get the following interesting corollary:

\begin{Corollary}\label{deforming-Higgs-to-Hodge}
Let $L$ be a smooth trivial Lie algebroid. Let $(E, \theta)$ be a torsion free, slope semistable
$\Omega_L$-Higgs sheaf on $X$. Then we can deform it to a slope semistable system of
$\Omega_L$-Hodge sheaves.
\end{Corollary}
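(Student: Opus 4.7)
The plan is to feed $(E,\theta)$ directly into Theorem \ref{existence-of-gr-ss-Griffiths-transverse-filtration} and then apply the Rees construction. Since $L$ is a trivial Lie algebroid, the bracket and anchor are zero, so a module structure over $L$ on a coherent sheaf is literally the same data as an $\Omega_L$-Higgs field: the ``integrable $d_{\Omega_L}$-connection'' becomes $\theta$ itself, with $d_{\Omega_L}=0$. Thus $(E,\theta)$ is a torsion free, slope semistable $L$-module in the sense required by Theorem \ref{existence-of-gr-ss-Griffiths-transverse-filtration}, and the theorem produces a canonical slope gr-semistable Griffiths transverse filtration $N^{\bullet}_S$ on $(E,\theta)$.

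Next, apply the Rees construction to this filtered object exactly as recalled at the beginning of Subsection 5.2: set
$$\xi(E,N^{\bullet}_S)=\sum_i t^{-i} N_S^i \otimes \cO_{X\times \AA^1}\subset p_X^*E,$$
equipped with the $t$-$d_{\Omega_L}$-connection obtained from $t\theta$. Because $L$ is trivial, the deformed Lie algebroid $L^R$ on $X\times \AA^1$ is also trivial (both its bracket $p_1^*[\cdot,\cdot]_L\otimes t$ and its anchor $p_1^*\alpha\otimes t$ vanish identically), so an $L^R$-module structure is just an $\Omega_{L^R}$-Higgs field. Hence $\xi(E,N^{\bullet}_S)$ is a flat family of $\Omega_L$-Higgs sheaves over $\AA^1$.

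It remains to identify the two ends. Over any $t\in \AA^1\setminus\{0\}$ the fiber is isomorphic to $(E,t\theta)$, which is isomorphic to $(E,\theta)$ as an $\Omega_L$-Higgs sheaf via rescaling, and in particular slope semistable. Over $t=0$ the Rees construction degenerates to the associated graded $(\Gr_{N_S}(E),\theta_{\gr})$; by the very construction of the Higgs field on a Griffiths transverse associated graded, this is a system of $\Omega_L$-Hodge sheaves, and by slope gr-semistability of $N^{\bullet}_S$ it is slope semistable. This gives the required deformation.

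There is essentially no real obstacle here once Theorem \ref{existence-of-gr-ss-Griffiths-transverse-filtration} is in hand; the only point worth being careful about is the dictionary between trivial-Lie-algebroid modules and Higgs sheaves, and the observation that the Rees deformation preserves the ``trivial'' nature of $L^R$ so that all fibers really are honest $\Omega_L$-Higgs sheaves rather than objects with some residual connection part.
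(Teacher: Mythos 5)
Your proposal is correct and matches the paper's intent exactly: the paper states this corollary as an immediate application of Theorem \ref{existence-of-gr-ss-Griffiths-transverse-filtration} (with no written proof), relying on precisely the dictionary you spell out — for a trivial $L$ an integrable $d_{\Omega_L}$-connection is an $\Omega_L$-Higgs field, so the theorem yields a gr-semistable Griffiths transverse filtration whose Rees degeneration has general fiber $(E,t\theta)\cong(E,\theta)$ and special fiber the slope semistable system of $\Omega_L$-Hodge sheaves $(\Gr_{N_S}(E),\theta_{\gr})$. Your added remark that $L^R$ remains trivial, so every fiber of the family is an honest Higgs sheaf, is a worthwhile clarification but not a departure from the paper's argument.
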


\subsection{Higgs-de Rham sequences}

Let $k$ be an algebraically closed field of characteristic $p>0$.
Let $X$ be a smooth projective $k$-variety of dimension $n$ that
can be lifted to a smooth scheme $\cX$ over $W_2(k)$.

Let $\MIC _{p-1}(X/k)$ be the category of $\cO_X$-modules with an
integrable connection whose $p$-curvature is nilpotent of level
less or equal to $p-1$. Similarly, let $\HIG _{p-1}(X/k)$ denote
the category of Higgs $\cO_{X'}$-modules with a nilpotent Higgs
sheaf of level less or equal to $p-1$. In this case one of the
main results of Ogus and Vologodsky (see \cite[Theorem 2.8]{OV})
says that:

\begin{Theorem} \label{Ogus-Vologodsky}
The Cartier operator
$$C_{\cX/\cS}:\MIC _{p-1}(X/k)\to \HIG _{p-1}(X'/k)$$
defines an equivalence of categories with quasi-inverse
$$C_{\cX/\cS}^{-1}:\HIG _{p-1}(X/k)\to \MIC _{p-1}(X'/k).$$
\end{Theorem}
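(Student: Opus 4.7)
The plan is to follow the strategy of Ogus and Vologodsky, exploiting the Azumaya-algebra structure on $F_{X/k,*}\cD_{X/k}$ over $T^*X' = \VV(T_{X'/k})$ recalled after Lemma \ref{BMR-lemma}. Level-$(p{-}1)$ nilpotence of the $p$-curvature means, by Lemma \ref{BMR-lemma} and Example \ref{example:trivial-Lie-algebroid}, that the central support (via $\imath$) of the object lies in the $(p{-}1)$-st infinitesimal neighbourhood $Z \hookrightarrow T^*X'$ of the zero section. Thus $\MIC_{p-1}(X/k)$ is identified with the category of modules over $(F_{X/k,*}\cD_{X/k})|_Z$, while $\HIG_{p-1}(X'/k)$ is identified with the category of $\cO_Z$-modules. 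Constructing $C_{\cX/\cS}$ therefore reduces to producing a splitting module $\mathcal{K}_{\cX}$ of the Azumaya algebra $(F_{X/k,*}\cD_{X/k})|_Z$: then $C_{\cX/\cS}^{-1}(-) = \mathcal{K}_{\cX}\otimes_{\cO_Z}(-)$ and $C_{\cX/\cS}(-) = \cHom_{\cD_{X/k}}(\mathcal{K}_{\cX},-)$.

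The key input that makes $\mathcal{K}_{\cX}$ available is the lifting $\cX/W_2(k)$. Given a local lift $\ti F \colon \cX \to \cX$ of $F_X$ modulo $p^2$, the rule $\omega \mapsto p^{-1}\ti F^*(\ti\omega)$ (for any local lift $\ti\omega$ of $\omega \in \Omega_{X'/k}$) gives a well-defined $\cO_{X'}$-linear splitting of the $p$-curvature pairing on $F_{X/k,*}\cO_X$; this is the Deligne--Illusie decomposition. Extending it to a splitting of $(F_{X/k,*}\cD_{X/k})|_Z$ proceeds by realising $\mathcal{K}_{\cX}^{\mathrm{loc}} = F_{X/k,*}\cO_X$ as a locally free $\cO_Z$-module of rank $p^d$ (where $d = \dim X$) on which $(F_{X/k,*}\cD_{X/k})|_Z$ acts via divided powers of the $p$-curvature: the divided powers $\gamma_n(\xi)$ of a section of $F_X^*\Omega_{X/k}$ make sense as endomorphisms for $n<p$, which is precisely the range that survives on $Z$, and this yields locally an Azumaya trivialization $(F_{X/k,*}\cD_{X/k})|_Z \simeq \cEnd_{\cO_Z}(\mathcal{K}_{\cX}^{\mathrm{loc}})$.

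The main obstacle is the globalization. Two lifts $\ti F_0, \ti F_1$ of $F_X$ modulo $p^2$ differ by an element of $p F_X^*\Omega_{X/k}^1$, which after division by $p$ induces an $\cO_{X'}$-linear map $\Omega_{X'/k} \to F_{X/k,*}\cO_X$; one must show that the corresponding transition isomorphisms between the local $\mathcal{K}_{\cX}^{\mathrm{loc}}$'s assemble into a $1$-cocycle which can be trivialised on $Z$, so that the pieces glue to a global $\mathcal{K}_{\cX}$. This is a divided-power calculation in which all coefficients $1/n!$ with $n<p$ are $p$-integral, and the restriction to level $\le p-1$ is essential: at level $p$ the obstruction $\gamma_p$ involves $1/p!$, and no such splitting exists in general. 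Once $\mathcal{K}_{\cX}$ is constructed, the asserted pair of mutually quasi-inverse functors is then a formal consequence of Morita equivalence for the trivialised Azumaya algebra.
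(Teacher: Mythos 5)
The paper offers no proof of this statement: it is quoted as \cite[Theorem 2.8]{OV} and used as a black box in the proof of Theorem \ref{LSZ_conjecture}, so there is no internal argument to compare yours against. Your sketch is a reasonable summary of the strategy that Ogus--Vologodsky (and, from the Azumaya point of view recalled after Lemma \ref{BMR-lemma}, Bezrukavnikov--Mirkovi\'c--Rumynin) actually follow: interpret level-$(p-1)$ nilpotence of the $p$-curvature as support in an infinitesimal neighbourhood $Z$ of the zero section of $\VV(T_{X'/k})$, split the Azumaya algebra $F_{X/k,*}\cD_{X/k}$ over $Z$ using the lift of $X$ modulo $p^2$, and conclude by Morita equivalence.

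That said, two points are wrong or seriously underdeveloped as written. First, $F_{X/k,*}\cO_X$ cannot be the local splitting module over $\cO_Z$: the $p$-curvature of $(\cO_X,d)$ vanishes, so this module is annihilated by the ideal of the zero section and is not locally free over $\cO_Z$. A splitting module for an Azumaya algebra of rank $p^{2d}$ over $\cO_Z$ must be locally free of rank $p^d$ over $\cO_Z$ itself, hence of rank $p^d\cdot\rk_{\cO_{X'}}\cO_Z$ over $\cO_{X'}$, which is strictly larger than $p^d=\rk_{\cO_{X'}}F_{X/k,*}\cO_X$ whenever $Z$ is not reduced to the zero section. The correct local object is of the form $F_{X/k,*}\cO_X\otimes_{\cO_{X'}}\cO_Z$ with a $\cD$-module structure twisted by the map $\Omega_{X'/k}\to F_{X/k,*}\Omega_{X/k}$ induced by $\ti F^*/p$; getting this action to be well defined is where the divided powers genuinely enter. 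Second, the globalization is not a matter of trivializing a \v{C}ech $1$-cocycle: local Frobenius lifts form a torsor under $F_X^*\Omega^1_{X/k}$, and Ogus--Vologodsky globalize by constructing a sheaf of algebras from this torsor together with its divided-power structure, then checking that the resulting functor lands exactly in $\HIG_{p-1}$ and restricts to classical Cartier descent on objects with vanishing $p$-curvature --- none of which your outline addresses. So your proposal correctly identifies the route taken in \cite{OV}, but with an incorrect candidate for the splitting module and with the two technical hearts of the argument (the divided-power action and the descent along the torsor of Frobenius liftings) left as assertions.
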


A small variant of the following lemma can be found in proof of
\cite[Theorem 4.17]{OV}:

\begin{Lemma}\label{Chern-classes-of-Cartier-images}
Let $(E, \theta)\in \HIG _{p-1}(X'/S)$. Then
$$[C_{\cX/\cS}^{-1}(E)]=F_{X/S}^*[E],$$
where $[\cdot ] $ denotes the class of a coherent $\cO_X$-module
in Grothendieck's $K$-group $K_0(X)$.
\end{Lemma}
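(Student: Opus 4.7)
The plan is to reduce to the case of zero Higgs field by filtering $(E,\theta)$ according to its nilpotence, and then to recognize $C^{-1}_{\cX/\cS}$ on such a Higgs sheaf as honest relative Frobenius pull-back.

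First, since $(E,\theta)\in \HIG_{p-1}(X'/k)$ has nilpotent Higgs field of level $\le p-1$, by definition there exists a finite filtration $0=E_0\subset E_1\subset\ldots \subset E_m=E$ by sub-Higgs-sheaves such that the induced Higgs field on each successive quotient $E_i/E_{i-1}$ vanishes. By Theorem \ref{Ogus-Vologodsky} the functor $C^{-1}_{\cX/\cS}: \HIG_{p-1}(X'/k)\to \MIC_{p-1}(X/k)$ is an equivalence of abelian categories; in particular it is additive and exact, so it sends each short exact sequence $0\to E_{i-1}\to E_i\to E_i/E_{i-1}\to 0$ (taken in $\HIG_{p-1}(X'/k)$) to a short exact sequence in $\MIC_{p-1}(X/k)$. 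Forgetting the connections and summing the resulting identities in $K_0(X)$ gives
$$[C^{-1}_{\cX/\cS}(E)]=\sum_{i=1}^m [C^{-1}_{\cX/\cS}(E_i/E_{i-1})].$$

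Second, for a Higgs sheaf of the form $(F,0)$ on $X'$ with vanishing Higgs field, the Ogus--Vologodsky construction identifies $C^{-1}_{\cX/\cS}(F,0)$ with $F_{X/S}^*F$ equipped with the canonical (Frobenius descent) connection; in particular the underlying $\cO_X$-module is $F_{X/S}^*F$. Applying this to each graded piece $E_i/E_{i-1}$ yields
$$[C^{-1}_{\cX/\cS}(E_i/E_{i-1})]=[F_{X/S}^*(E_i/E_{i-1})]=F_{X/S}^*[E_i/E_{i-1}]$$
in $K_0(X)$, where we use that $F_{X/S}:X\to X'$ is flat (as $X/S$ is smooth over the perfect field $k$) so that $F_{X/S}^*$ is exact and induces a well-defined pull-back on $K_0$.

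Combining these two steps and again using exactness of $F_{X/S}^*$ on $K_0(X')\to K_0(X)$, we get
$$[C^{-1}_{\cX/\cS}(E)]=\sum_{i=1}^m F_{X/S}^*[E_i/E_{i-1}]=F_{X/S}^*\sum_{i=1}^m [E_i/E_{i-1}]=F_{X/S}^*[E],$$
which is the claim. The only nontrivial inputs are (i) exactness of $C^{-1}_{\cX/\cS}$ on $\HIG_{p-1}$ and (ii) the explicit description $C^{-1}_{\cX/\cS}(F,0)=F_{X/S}^*F$ for Higgs sheaves with zero Higgs field. Both are built into the construction of the Cartier transform in \cite{OV}, so the main obstacle is really just to cite them cleanly; the rest is formal manipulation in $K_0$.
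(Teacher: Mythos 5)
Your argument is correct, and it is essentially the standard one. The paper itself gives no proof of this lemma -- it only remarks that a small variant appears in the proof of \cite[Theorem 4.17]{OV} -- and the d\'evissage you carry out (filter $(E,\theta)$ so that the graded pieces have zero Higgs field, use exactness of $C^{-1}_{\cX/\cS}$ on the resulting short exact sequences, and identify $C^{-1}_{\cX/\cS}(F,0)$ with $F_{X/S}^*F$ carrying the canonical connection via classical Cartier descent, with $F_{X/S}$ finite flat since $X/k$ is smooth) is precisely the intended reduction. One small point worth phrasing carefully: $\HIG_{p-1}$ is not closed under extensions inside the category of all Higgs modules (an extension of two objects of level $\le p-1$ need not have level $\le p-1$), so rather than invoking exactness of $C^{-1}_{\cX/\cS}$ wholesale you should note that the specific sequences $0\to E_{i-1}\to E_i\to E_i/E_{i-1}\to 0$ have all terms in $\HIG_{p-1}$ (being subquotients of $E$), that kernels and cokernels in $\HIG_{p-1}$ and $\MIC_{p-1}$ are computed on underlying $\cO$-modules, and that an equivalence of abelian categories preserves them; with that caveat the formal $K_0$ manipulation is exactly as you wrote it.
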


As a corollary to Theorem \ref{Ogus-Vologodsky} and Lemma
\ref{Chern-classes-of-Cartier-images} we get the following:

\begin{Corollary}\label{Cor-OV}
Let $(E, \theta)$ be a torsion free Higgs sheaf with nilpotent
Higgs field of level less than $p$. Then it is slope semistable if
and only if the corresponding sheaf with integrable connection
$(V, \nabla):=C_{\cX/\cS}^{-1}(E, \theta)$ is slope semistable.
\end{Corollary}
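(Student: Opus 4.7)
The plan is to combine Theorem \ref{Ogus-Vologodsky} with Lemma \ref{Chern-classes-of-Cartier-images}. Being an equivalence of abelian categories, the functor $C_{\cX/\cS}^{-1}: \HIG_{p-1}(X'/k) \to \MIC_{p-1}(X/k)$ induces a bijection between $\theta$-invariant coherent subsheaves of $(E,\theta)$ and $\nabla$-invariant coherent subsheaves of $(V,\nabla)$. Nilpotence of level less than $p$ is automatically inherited by sub- and quotient objects, so this correspondence is exactly on the class of subsheaves used to test slope semistability in our situation.

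Next I would apply Lemma \ref{Chern-classes-of-Cartier-images} to each torsion free Higgs subsheaf $E' \subset E$: it yields $[C_{\cX/\cS}^{-1}(E')] = F_{X/k}^*[E']$ in $K_0(X)$. Since $F_{X/k}$ is finite flat of degree $p^n$ and, after identifying $X'$ with $X$ via the isomorphism coming from $k$ being perfect, acts as multiplication by $p$ on $\Pic$, one reads off that $\rk(C_{\cX/\cS}^{-1}(E')) = \rk(E')$ and $\deg(C_{\cX/\cS}^{-1}(E')) = p \deg(E')$ with polarizations chosen compatibly on $X$ and $X'$. Hence $\mu \circ C_{\cX/\cS}^{-1} = p \cdot \mu$ on every subobject, and in particular $\mu(V) = p \mu(E)$.

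Putting the two steps together, the semistability inequality $\mu(V') \le \mu(V)$ for every $\nabla$-invariant torsion free subsheaf $V' \subset V$ translates under the bijection into $\mu(E') \le \mu(E)$ for every Higgs subsheaf $E' \subset E$, and conversely. The one point that I expect to require a small amount of care, rather than constitute a real obstacle, is the matching of saturations: one must verify that taking the saturation of a subobject commutes with the Cartier transform up to the usual correspondence, so that the destabilizing subsheaves one tests on each side really correspond under $C_{\cX/\cS}^{-1}$. This follows from the fact that $F_{X/k}^*$ preserves torsion-freeness and that the Cartier transform is constructed compatibly with finite flat pullback, so a saturated $\theta$-invariant subsheaf of $E$ is sent to a saturated $\nabla$-invariant subsheaf of $V$.
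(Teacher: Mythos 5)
Your argument is correct and is exactly the route the paper intends: the statement is presented there as an immediate consequence of Theorem \ref{Ogus-Vologodsky} (which gives the bijection between $\theta$-invariant subsheaves and $\nabla$-invariant subsheaves, since nilpotence of level $<p$ passes to subobjects) and Lemma \ref{Chern-classes-of-Cartier-images} (which shows slopes are uniformly rescaled by $p$ under $C_{\cX/\cS}^{-1}$). Your worry about saturations is not even needed, since slope semistability can be tested on all coherent subobjects and the equivalence of categories already matches these up exactly.
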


\medskip

Now let $(E, \theta )$ be a rank $r$ torsion free Higgs sheaf with
nilpotent Higgs field. Let us assume that $r\le p$ so that level
of nilpotence of $(E, \theta)$ is less than $p$. Let us recall the
following definition taken from \cite{LSZ}.

\begin{Definition}
A \emph{Higgs--de Rham sequence} of $(E, \theta)$ is an infinite
sequence
$$ \xymatrix{
& (V_0, \nabla _0)\ar[rd]^{\Gr _{N_0}}&& (V_1, \nabla _1)\ar[rd]^{\Gr _{N_1}}&\\
(E_0, \theta _0)=(E, \theta)\ar[ru]^{C^{-1}}&&(E_1, \theta_1)\ar[ru]^{C^{-1}}&&...\\
}$$ in which $C^{-1}=C^{-1}_{\cX/S}$ is the inverse Cartier
transform, $N_i^{\bullet}$ is a Griffiths transverse filtration of
$(V_i, \nabla _i)$ and $(E_{i+1}:=\Gr _{N_{i}}(V_{i}), \theta
_{i+1})$ is the associated Higgs sheaf.
\end{Definition}

\medskip

The following theorem proves the conjecture of Lan-Sheng-Zuo
\cite[Conjecture 2.8]{LSZ}:

\begin{Theorem}\label{LSZ_conjecture}
If $(E, \theta)$ is slope semistable then there exists a
canonically defined Higgs--de Rham sequence
$$ \xymatrix{
& (V_0, \nabla _0)\ar[rd]^{\Gr _{N_S}}&& (V_1, \nabla _1)\ar[rd]^{\Gr _{N_S}}&\\
(E_0, \theta _0)=(E, \theta)\ar[ru]^{C^{-1}}&&(E_1, \theta_1)\ar[ru]^{C^{-1}}&&...\\
}$$ in which each $(V_i, \nabla_i)$ is slope semistable
and $(E_{i+1}, \theta _{i+1})$ is the slope semistable
Higgs sheaf associated to $(V_i, \nabla _i)$ via Simpson's filtration.
\end{Theorem}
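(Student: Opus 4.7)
The plan is to construct the Higgs--de Rham sequence inductively, alternating between the inverse Cartier transform and Simpson's filtration from Theorem \ref{existence-of-gr-ss-Griffiths-transverse-filtration}. The essential bookkeeping is that at every stage of the induction we remain inside the categories $\HIG_{p-1}(X'/k)$ and $\MIC_{p-1}(X/k)$ where Theorem \ref{Ogus-Vologodsky} applies, and that slope semistability is preserved at every step.

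Set $(E_0, \theta_0):=(E,\theta)$. By the rank hypothesis $r\le p$ (implicit in the definition of a Higgs--de Rham sequence), the Cayley--Hamilton type argument recalled earlier in the paper shows that $\theta$ is nilpotent of level strictly less than $p$, so $(E_0,\theta_0)\in \HIG_{p-1}(X'/k)$. By hypothesis it is slope semistable. For the inductive step, suppose $(E_i,\theta_i)\in \HIG_{p-1}(X'/k)$ is slope semistable of rank $r$. First, define
$$(V_i,\nabla_i):=C_{\cX/\cS}^{-1}(E_i,\theta_i);$$
by Corollary \ref{Cor-OV} this sheaf with integrable connection is again slope semistable. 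Second, apply Theorem \ref{existence-of-gr-ss-Griffiths-transverse-filtration} to $(V_i,\nabla_i)$ to obtain the canonical Simpson filtration $N_S^{\bullet}$ on $(V_i,\nabla_i)$, and set $(E_{i+1},\theta_{i+1}):=(\Gr_{N_S}(V_i),\theta_{i+1})$. By definition of slope gr-semistability, this Higgs sheaf is slope semistable.

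It remains to verify that $(E_{i+1},\theta_{i+1})$ lies in $\HIG_{p-1}(X'/k)$ so that the induction proceeds. As associated graded of a Griffiths transverse filtration on an integrable connection, it carries the structure of a system of Hodge sheaves; its rank is $r\le p$, so there are at most $r$ nonzero Hodge summands and hence $\theta_{i+1}^{r}=0$, which means the Higgs field is nilpotent of level less than $r\le p$. This gives the required sequence, and since both the formation of $C^{-1}_{\cX/\cS}$ (as an equivalence of categories) and the formation of Simpson's filtration (by the canonicity clause of Theorem \ref{existence-of-gr-ss-Griffiths-transverse-filtration}) are canonical, so is the resulting Higgs--de Rham sequence.

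The two nontrivial inputs are already in hand; the only genuinely delicate point is the bookkeeping on nilpotence level, which is forced by the rank bound $r\le p$ together with the Hodge-decomposition structure of each $(E_{i+1},\theta_{i+1})$. Everything else is a direct iteration of Corollary \ref{Cor-OV} and Theorem \ref{existence-of-gr-ss-Griffiths-transverse-filtration}.
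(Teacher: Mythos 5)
Your proposal is correct and follows essentially the same inductive argument as the paper: alternate Corollary \ref{Cor-OV} (semistability of $C^{-1}$) with Theorem \ref{existence-of-gr-ss-Griffiths-transverse-filtration} (Simpson's filtration), and use the system-of-Hodge-sheaves structure of each $(E_{i+1},\theta_{i+1})$ together with $r\le p$ to keep the nilpotence level below $p$. The only slight inaccuracy is attributing the nilpotence of $\theta_0$ to a Cayley--Hamilton argument; in the paper it is simply a standing hypothesis on $(E,\theta)$, with $r\le p$ then bounding the level.
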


\begin{proof}
The proof is by induction on index $i$. Once we defined slope
semistable $(E_{i}, \theta_i)$, we can construct $(V_i, \nabla
_i)$, which is slope semistable by Corollary \ref{Cor-OV}. So by
Theorem \ref{existence-of-gr-ss-Griffiths-transverse-filtration}
there exists Simpson's filtration on $(V_i, \nabla _i)$ and hence
we can  construct a slope semistable Higgs sheaf $(E_{i+1},
\theta_{i+1})$. Since $(E_{i+1}, \theta_{i+1})$ is a system of
Hodge sheaves and $r\le p$, it satisfies the nilpotence condition
required to define $C^{-1}$.
\end{proof}

\medskip
In the above theorem slope semistability is defined with respect
to an arbitrary fixed collection $(D_1,\dots ,D_{n-1})$ of nef
divisors on $X$.

\bigskip

\emph{\large \bf Acknowledgements.}

The author would like to thank Carlos Simpson for explaining to
him  Lemma \ref{Simpson}. The author would also like to thank 
Yanhong Yang and the referee for pointing out a small problem with 
the first formulation of Corollary \ref{YY}.
Finally, the author would like to thank Kang Zuo for inviting him 
to the Johannes Gutenberg University Mainz to lecture on the material 
contained in the present paper and SFB/TRR 45 
``Periods, Moduli Spaces and Arithmetic of Algebraic Varieties'' 
for supporting his visit.

\footnotesize

\end{document}